\numberwithin{equation}{section}
\newtheorem{theorem}{Theorem}[section]
\newtheorem{corollary}[theorem]{Corollary}
\newtheorem{definition}[theorem]{Definition}
\newtheorem{lemma}[theorem]{Lemma}
\newtheorem{proposition}[theorem]{Proposition}
\newtheorem{remark}[theorem]{Remark}
\newcommand{\eps}{\varepsilon}
\newcommand{\poincare}{Poincar\'e }
\newcommand{\PB}{Poincar\'e-Bendixson }
\newcommand{\PH}{Poincar\'e-Hopf }
\DeclareMathOperator{\Div}{div}
\DeclareMathOperator{\Int}{int}
\DeclareMathOperator{\diam}{diam}
\title{\vspace{-0.7in}Thick Arnold tongues\footnote{ML acknowledges partial support  by the NSF grant DMS-9704554
}}
\author{Mark Levi and Alexey Okunev}
\begin{document}
\vspace{-5cm}
\maketitle

\begin{abstract}
 We introduce and study a physically motivated problem that exhibits interesting and perhaps unexpected mathematical features. A cellular flow is a two-dimensional Hamiltonian flow of the Hamiltonian $H(x, y) = \cos(x) \cos(y)$.
 We study a simple model of the dynamics of an inertial particle carried by such a flow, subject to viscous drag and to an additional constant external force $(b, a)$.
 In the limiting case of zero inertia particles  the dynamics is Hamiltonian with $H(x, y) = \cos(x) \cos(y) - ax + by$.
 For small but nonzero $a, \ b $ there appear ``channels" of trajectories that wind their way to infinity, of small relative measure,  while most trajectories remain periodic.

 By contrast, for nonzero inertia, no matter how small, almost all particle trajectories drift to infinity.
 Moreover, the asymptotic direction of this drift no longer coincides with the direction of forcing, and rather becomes   Cantor-like function of the forcing direction $a/b$, and with an unexpected feature: the plateaus of this function occupy a set of full measure. Moreover, the complement to this set has zero Hausdorff dimension. In a two-parameter representation (one parameter being the forcing direction $a/b$, the other  the drag coefficient), this gives rise to Arnold tongues,  the tongues corresponding to rational slopes of drift. However, unlike Arnold's example, the complement to the union of all tongues has zero measure. This is explained by the behavior of rotation number for monotone families of circle maps with flat spots.
\end{abstract}

\section{Introduction and Results}
\subsection{Background} \label{s:intro-results}

The problem of particle transport by fluid flow has been studied extensively - in experiment, theory and computation. The case of spherical particles in Newtonian fluids has been modeled by the so-called Maxey--Riley equation (\cite{maxey1983equation}), which is Newton's second law for the particle accounting for buoyancy, drag, inertial effect of the fluid (added mass), and boundary layer (memory)  (\cite{haller3, haller2} and references therein).
Many authors consider a simplified form of this equation, where the only force coming from the fluid is the drag proportional to the velocity mismatch between the particle and surrounding fluid:
\begin{equation} \label{e:MR}
	\ddot {\bf x} = - \frac{1}{\eps} ( \dot {\bf x} - {\bf v} ( {\bf x} ) ),
\qquad ({\bf x}, \dot {\bf x}) \in \mathbb R^4.
\end{equation}
Here, ${\bf x}(t) \in \mathbb R^2$ is the particle position, ${\bf v}(\bf x)$ is the fluid velocity at ${\bf x}$, and $\eps = m / k$, where $m$ is the particle mass and $k$ is the drag coefficient (see Appendix~\ref{a:gravity} for details).
The smaller $\eps$, the smaller  the impact particle's inertia has on the dynamics.
When $\eps=0$ (zero-inertia case) the particle just moves together with the surrounding fluid: $ \dot {\bf x} = {\bf v} ( {\bf x} ) $; we will discuss what happens when $\eps$ is small but positive.

Many studies on particle transport deal with the particular case when fluid motion is given by the cellular flow (also called the 2-D Taylor-Green vortex flow), i.e., the Hamiltonian flow along the contour lines of $H = \cos x \cos y$, Figure~\ref{fig:cells}.
Cellular flow is often used to model particles (e.g., plankton or microplastic) carried by convective motions near sea surface (\cite{arrieta2015microscale, baggaley2016stability, kreczak2023dynamics}) or as a very simplified model of turbulence (\cite{maxey1986gravitational}).
The dynamics of~\eqref{e:MR} with ${\bf v}$ being a cellular flow is quite simple, as Figure~\ref{fig:cellsparticle} illustrates: the particles approach the cell boundaries. Additional effects can make the problem quite nontrivial, and
  there is a vast literature on the subject.
This includes studying the dynamics of  particles influenced by random thermal noise
(\cite{sarracino2016nonlinear, cecconi2017anomalous, renaud2020dispersion, nath2022transport}),
the roles of  particle shapes,
(asymmetric~\cite{shin1997chaotic}, compound~\cite{piva2003single},
self-propelled~\cite{torney2007transport, yin2024diffusion}),
particles driven by a magnetic force (\cite{ravnik2014numerical}),
particles in a time-dependent cellular flow
(\cite{fung1997gravitational, chan1999change, venditti2022invariant}).
The list above is by no means exhaustive; additional references can be found in the cited papers.

The case of particles in the cellular flow subject to a constant external force (gravity) was considered as early as 1949 by Stommel~\cite{stommel1949trajectories}, who considered the zero-inertia case.
The positive inertia case was considered by Maxey and Corrsin in~\cite{maxey1986gravitational} for aerosol particles (i.e., particles with the density much greater than that of the fluid).
They carried out a numerical investigation of this problem for a wide range of parameters and noticed two surprising effects.
First, cellular flow makes particles settle much faster than in the still fluid.
Second, particles equidistributed initially tend to concentrate on certain curves.
The case of aerosol particles was then rigorously investigated in~\cite{rubin1995settling} for a special case when the gravity direction is aligned with the flow cells orientation, and particle concentration was proved.
In~\cite{maxey1987motion}, the case of arbitrary particle density was considered.
Curiously, when the particle and fluid density are of the same order, chaotic dynamics was observed even without the external forcing~\cite{wang1992chaotic}.
These results are nicely summarized in the review~\cite[\S3]{maxey2017modeling}.

\begin{figure}[t]
\centering
\begin{subfigure}{.45\textwidth}
 \centering
 \includegraphics[height=35mm]{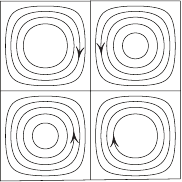}
 \caption{A cellular area-preserving flow in ${\mathbb R} ^2 $ given by the Hamiltonian $H(x,y)= \cos x \cos y$}
 \label{fig:cells}
\end{subfigure}
\hspace{10mm}
\begin{subfigure}{.4\textwidth}
 \centering
 \includegraphics[height=35mm]{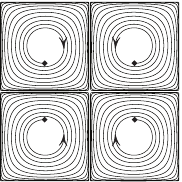}
 \caption{Trajectories of particles carried by (\ref{e:MR}) where $ {\bf v} $ is an in Figure \ref{fig:cells} and $\eps=1/25$.}
 \label{fig:cellsparticle}
\end{subfigure}
\caption{A particle in a cellular flow}
\label{fig:cells-all}
\end{figure}

Mathematically, accounting for a constant external force acting on an aerosol particle (as in~\cite{maxey1986gravitational, rubin1995settling}) can be reduced   to adding a constant term to the fluid velocity, i.e. to taking ${\bf v}({\bf x}) = {\bf u}({\bf x}) + {\bf w}$ in~\eqref{e:MR}, where ${\bf u}({\bf x})$ is the fluid velocity, and ${\bf w}$ is the terminal velocity of a particle pulled by gravity in a still liquid.
This reduction is explained in~\cite{maxey1986gravitational}, it is also summarized in Appendix~\ref{a:gravity}.
 One can write the resulting ${\bf v}({\bf x})$ as a Hamiltonian vector field by adding a linear part to the Hamiltonian giving the fluid motion ($\bf u$ must be Hamiltonian if the fluid is incompressible). If ${\bf w} = (b, a)$, we get
\begin{equation} \label{e:fluid}
{\bf v} = \bigg(\frac{\partial H}{\partial y}, -\frac{\partial H}{\partial x} \bigg), \ \ \hbox{with} \ \
H(x, y) = \cos x \cos y - ax + by.
\end{equation}
The dynamics of the resulting vector field $\dot {\bf x} = {\bf v}({\bf x})$ is referred to as zero-inertia case (\cite{maxey1986gravitational}), it results from the assumption that the particle's relative velocity in the fluid equals the terminal velocity $\textbf{w}$ so that gravity is compensated by the fluid drag.

\begin{wrapfigure}{r}{0.5\textwidth}
  \center{ \includegraphics[width=0.9\linewidth]{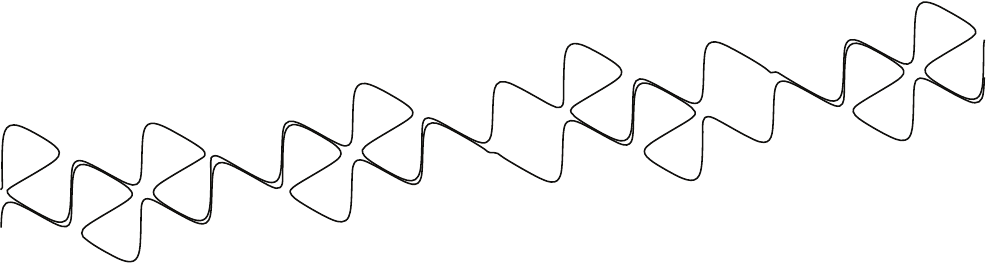}}
  \caption{Two trajectories of (\ref{e:MR}) where ${\bf v} $ has a 3-fold symmetry. The trajectories are separated by stable manifolds of equilibrium points. }
  \label{fig:triangulargrid2}
\end{wrapfigure}

\subsection{Subject of this paper}
We give a rigorous analysis of the model~\eqref{e:MR} with ${\bf v}$ given by~\eqref{e:fluid} focusing on the direction of particle drift.
Note that one can also consider Hamiltonians with the periodic part other than $\cos x \cos y$ in~\eqref{e:fluid} using similar methods.
As an example, Figure~\ref{fig:triangulargrid2} shows trajectories of~\eqref{e:MR} where $\cos x \cos y$ in~\eqref{e:fluid} has been replaced by a Hamiltonian with a 3-fold symmetry.\footnotemark
Let us consider first the ``unperturbed'' case  of $\eps=0$, corresponding to the Hamiltonian vector field (\ref{e:fluid}).  Trajectories are the level curves of $H$. For  the periodic case of $ a=b=0 $ shown in Figure~\ref{fig:cells} the picture is trivial; but case of nonzero $a, b$, illustrated in Figure~\ref{f:level-lines} in Section~\ref{s:Ham}, is already quite interesting combinatorially.

Changing  $(a,b)$ from zero gives rise to ``channels''  consisting of unbounded trajectories winding their way to infinity between homoclinic cells (Figure~\ref{f:level-lines} in Section~\ref{s:Ham}). These trajectories have  interesting  combinatorics related to continued fraction expansion of the slope $ b/a $.  This problem has been studied in  \cite{novikov1982hamiltonian, arnold1991topological, zorich1999wind, dynnikov2022chaotic} in a somewhat more general setting.

\footnotetext{We took
  $H(x, y) = h({\bf x})h(R{\bf x})h(R^2 {\bf x}) - ax + by$ with
  ${\bf x}=(x,y)$, $h({\bf x}) = \sin x \sin y$ and with $R$ being the rotation around the origin by $2 \pi /3$.
}

The dynamics changes drastically for $\eps>0 $.
Figure~\ref{fig:drift1} shows typical   trajectories of (\ref{e:MR})  for small $\eps$; in all numerical simulations  these trajectories drift in a rational direction regardless of the direction of imposed forcing. Moreover, the drift slope $m$  can be sensitive to the slope $ \alpha = a/b $  of imposed forcing (compare the two top cases in Figure~\ref{fig:drift1}: half a degree change in forcing direction causes a drastic change of the trajectory).
This figure, as well as other numerical experiments, may give a false impression of discontinuous dependence of the drift slope $m$   on  the slope $ \alpha $  of imposed forcing.

Actually, we will later see that $ m ( \alpha ) $  {\it is} continuous; the  false impression of discontinuity is explained by the fact that $m$ is  a Cantor-like function that is locally constant on a full measure set.  Thus the slope of trajectories changes on the set of slopes $\alpha=a/b$ of zero measure (Figure~\ref{f:staircase} below), giving the impression of discontinuity.
\FloatBarrier
\begin{figure}[H]
  \centering
  \includegraphics[width=0.8\textwidth]{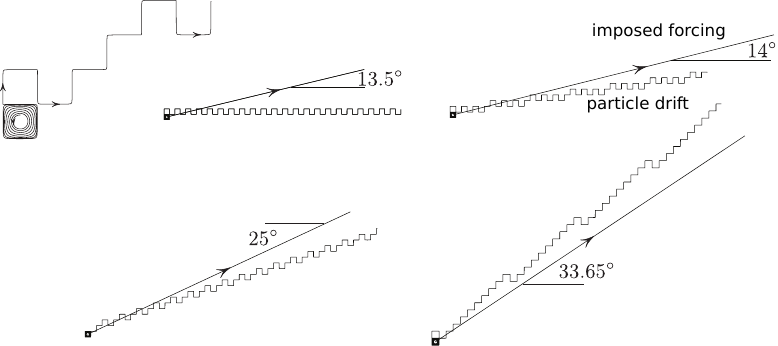}
  \caption{Trajectories of the system~\eqref{e:MR},~\eqref{e:fluid} with $\sqrt { a ^2 + b ^2 } = 0.02 $ and $ \eps = 1/25 $.}
  \label{fig:drift1}
\end{figure}

\subsection{Main results}
Consider the system~\eqref{e:MR} with the vector field ${\bf v}$ given by~\eqref{e:fluid}. This is a model of a particle carried by the cellular flow and pulled by a constant forcing $(b, a)$ such as gravity proposed by Maxey and Corrsin~\cite{maxey1986gravitational}.
Our main result describes   the dependence of the asymptotic slope $m$ of particles' forward trajectories  on the forcing slope $\alpha = \frac a b$.

For definiteness, we assume that $b$ and $\eps$ are fixed, so that $a$ is tied to  $\alpha$ via $a = \alpha b$. Also,    $a$ and $b$ will be non-zero and  sufficiently small but fixed, but $\varepsilon$ is much smaller, as is stated in the theorem below. By  symmetry, it suffices to consider the case $b \ge a > 0$. We relax the condition $b \ge a$ to $a \le 1.5b$ to allow the forcing slope $\frac a b$ to pass through $1$.

\begin{theorem} \label{t:main}
There exists $\gamma>0$ such that for any positive $\delta < \gamma$ there exists $\varepsilon_0 > 0$ such that, provided that $a$ and $ b  \in [\delta, \gamma]$ with $a \le 1.5b$ and $\eps \in (0, \eps_0)$, the following holds for~\eqref{e:MR} with ${\bf v}$ given by~\eqref{e:fluid}.
\begin{enumerate}

 \item \label{i:rho-exists} For all initial data $ ({\bf x}_0, \dot {\bf x}_0 )\in {\mathbb R}^4$ with the exception of a countable union of codimension one hypersurfaces the particle's forward trajectory $ {\bf x} (t) $ in $ {\mathbb R} ^2 $ is unbounded and follows some ray at a finite distance. Moreover, the slope $m$ of this ray is the same for all such initial data. We will call this $m$ the   drift slope.

 \item The   drift slope $ m $ has a Cantor-like dependence on the direction $ \alpha = \frac a b $ of the forcing (Figure~\ref{f:staircase}). To be precise, we fix $b \in [\delta, \gamma]$, $\eps \in (0, \eps_0)$ and set $a(\alpha) = \alpha b$,
 $I = [\delta/b, \; \min(\gamma/b, \; 1.5)]$. Then
 $m(\alpha), \; \alpha \in I$
 is a Cantor-like function: it is continuous, non-decreasing, and the set $C \subset I$ of points where $m$ is not locally constant is a nowhere dense perfect set.
 Also,
 $C = \{\alpha \in I : m(\alpha) \not \in \mathbb Q\}$,
 and each rational value $m(\alpha) = p/q$ achieved on $I$ is achieved on a closed nondegenerate interval (plateau) $m^{-1}(p/q)$.

 \item
 The set $C$ has zero Hausdorff dimension (and hence also zero Lebesgue measure). According to this and the previous item,  $m(\alpha)$ is rational for almost all values of the parameter $\alpha$.

 \item The function $m(\alpha)$ is "steep to all orders" at the endpoints of each plateau $m^{-1}(p/q)$,  Figure~\ref{fig:steepness}.
 More precisely, let $\alpha_0$ be  the right endpoint of a plateau and let $\Delta m = m(\alpha_0+\Delta \alpha) - m(\alpha_0)$.
 Then, there exist $c > 0$ and $\delta > 0$ such that
 $\Delta \alpha < e^{-\frac{c}{\Delta m}}$ when $\Delta \alpha \in (0, \delta)$.
 A similar statement holds for the left endpoint of every plateau.

 Note that this inequality implies that for any $n \in \mathbb N$ we have
 $(\Delta m)^n \gg \Delta \alpha$ when $\Delta \alpha \to 0^+$ -- that is, the graph of $m(\alpha)$ is tangent to all orders to vertical lines passing through the plateau's endpoints.
\end{enumerate}
\end{theorem}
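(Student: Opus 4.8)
\emph{Reduction to a planar flow.} The plan is to collapse \eqref{e:MR} onto a monotone degree-one circle map whose rotation number determines the drift slope, and then to read off assertions 1--4 from the theory of rotation numbers in monotone families of circle maps with a flat spot. Writing $\mathbf p=\dot{\mathbf x}$ and introducing the fast variable $\mathbf q=\mathbf p-\mathbf v(\mathbf x)$, the system becomes the slow--fast system $\dot{\mathbf x}=\mathbf v(\mathbf x)+\mathbf q$, $\ \eps\dot{\mathbf q}=-\mathbf q-\eps\,D\mathbf v(\mathbf x)\bigl(\mathbf v(\mathbf x)+\mathbf q\bigr)$; by Fenichel's theorem it carries an attracting, normally hyperbolic slow manifold $\mathbf q=\mathbf q_\eps(\mathbf x)=-\eps\,D\mathbf v(\mathbf x)\mathbf v(\mathbf x)+O(\eps^2)$ on which the reduced, $2\pi\mathbb Z^2$-periodic flow is
\[
 \dot{\mathbf x}=\mathbf v_\eps(\mathbf x):=\mathbf v(\mathbf x)-\eps\,(D\mathbf v\cdot\mathbf v)(\mathbf x)+O(\eps^2),
\]
a $C^r$-small dissipative perturbation of the zero-inertia field~\eqref{e:fluid}. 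Every trajectory of \eqref{e:MR} is forward asymptotic to this manifold, so the only bounded orbits are those landing on the stable manifolds of the saddles of $\mathbf v_\eps$; each such saddle is an equilibrium of \eqref{e:MR} with a three-dimensional stable manifold (one slow stable direction together with two strongly contracting fast ones), and these hypersurfaces form exactly the countable union of codimension-one hypersurfaces excluded in assertion 1. Linearising the reduced flow at an equilibrium of $\mathbf v$ gives the matrix $A-\eps A^2+O(\eps^2)$ with $A=D\mathbf v$, so an eigenvalue $\lambda$ of $A$ becomes $\lambda-\eps\lambda^2$: at the former cell centres $\lambda=\pm i\omega$ gives eigenvalues $\eps\omega^2\pm i\omega$, i.e.\ \emph{repelling} foci (no sinks, consistent with almost-everywhere escape), and at the former saddles $\lambda=\pm\lambda_i$ (with $\lambda_i>0$ since $\Div\mathbf v\equiv0$) the rates become $\lambda_i(1-\eps\lambda_i)$ and $-\lambda_i(1+\eps\lambda_i)$, so the eigenvalue ratio at the $i$-th saddle is $\mu_i=\frac{1+\eps\lambda_i}{1-\eps\lambda_i}>1$.

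\emph{The circle map and its flat spot.} Using the combinatorial description of the level curves $\{H=\mathrm{const}\}$ for small $b\ge a>0$ (\cite{novikov1982hamiltonian,arnold1991topological,zorich1999wind,dynnikov2022chaotic}) I would fix a short transversal $\Sigma$ crossing the ``main channel'' once per fundamental domain and form the first return of $\mathbf v_\eps$ to $\bigcup_{k\in\mathbb Z^2}(\Sigma+2\pi k)$, recording the lattice vector accumulated. Since this return map is generated by a flow it is a lift $\hat P_\alpha$ of a monotone degree-one circle map $P_\alpha$, and the drift slope is (for our combinatorics) a fixed fractional-linear function of the rotation number, $m(\alpha)=\frac{p_1\rho(P_\alpha)+p_0}{q_1\rho(P_\alpha)+q_0}$ with integer coefficients, hence monotone and rationality-preserving. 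A monotone degree-one circle map has a rotation number that exists and is independent of the orbit, which supplies the common slope in assertion 1; when $\rho$ is rational the flat spot below forces $P_\alpha$ to possess an attracting periodic orbit, so the planar trajectory is asymptotic to a periodic one and follows a ray at finite distance, while for irrational $\rho$ the bounded-deviation estimate for monotone circle maps gives the same conclusion. The decisive point --- and the technical heart of the whole argument --- is that $P_\alpha$ has a genuine \emph{flat spot}: with no sinks present, trajectories spiral off the repelling foci onto the broken separatrix net, and an \emph{interval} $J\subset\Sigma$ of crossings consists of trajectories that shadow one and the same heteroclinic cycle of $\mathbf v_\eps$; because the cycle's contraction factor $\prod_i\mu_i$ exceeds $1$, the Dulac-type transition maps around it collapse $J$ and $P_\alpha$ is constant on $J$. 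Proving that $J$ is genuinely nondegenerate --- not merely that $P_\alpha$ is strongly contracting there --- and describing $P_\alpha$ off $J$, is where the real work lies; it requires a careful study of the Dulac maps of $\mathbf v_\eps$ near its saddles and of the splitting of its separatrices as $\alpha$ and $\eps$ vary.

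\emph{Devil's staircase and zero dimension (assertions 2--3).} Monotone dependence of $P_\alpha$ on $\alpha$ follows from a monotone comparison of the fields $\mathbf v_\eps$ as the tilt $\alpha=a/b$ increases --- a first-order (Melnikov) computation shows the crossing points on $\Sigma$ move in one direction --- so $\{\hat P_\alpha\}$ is a monotone one-parameter family of lifts of degree-one circle maps, each with a flat spot. The general theory of rotation numbers for such families then gives at once: $\rho(\alpha)$ is continuous and non-decreasing; each rational value is attained on a closed nondegenerate interval, because a periodic orbit passing through a flat spot is super-stable and hence persists on an interval of parameters --- these are the Arnold tongues; the set where $\rho$ is irrational is a nowhere-dense perfect set equal to $\{\alpha:\rho(\alpha)\notin\mathbb Q\}$; and it has zero Hausdorff dimension, the flat spot being exactly what makes the gaps between the tongues visible at combinatorial level $n$ shrink super-exponentially --- precisely the mechanism that separates flat-spot families from Arnold's analytic example, where the complement of the tongues has positive measure. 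Transporting everything through the fractional-linear change $\rho\mapsto m$ gives assertions 2 and 3 verbatim, with $C$ the $m$-image of the irrational-$\rho$ set.

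\emph{Steepness at plateau endpoints (assertion 4).} Let $\alpha_0$ be the right endpoint of a plateau $m^{-1}(p/q)$. At $\alpha_0$ the orbit realising the locking reaches the edge of the flat spot, i.e.\ it runs exactly along a stable separatrix of $\mathbf v_\eps$; for $\alpha=\alpha_0+\Delta\alpha$ it passes at transverse distance $\asymp\Delta\alpha$ from that separatrix, so an orbit approaching the edge shadows the heteroclinic cycle for an unboundedly long time and compounds the contractions $\mu_i>1$. Consequently the flat spot of $P_\alpha$ is flat to \emph{infinite} order at its endpoints (a composition of arbitrarily many contracting Dulac maps is $C^\infty$-flat), and ``steep to all orders'' is the corresponding property of the rotation number. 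Quantitatively, a trajectory approaching a hyperbolic saddle at distance $d$ needs time $\asymp\lambda_i^{-1}\log(1/d)$ to clear it, during which it returns to $\Sigma$ on the order of $\log(1/d)$ times with the rotation number pinned near $p/q$; with $d\asymp\Delta\alpha$ this yields $\Delta m=\rho(\alpha)-p/q\asymp 1/\log(1/\Delta\alpha)$, which rearranges to $\Delta\alpha<e^{-c/\Delta m}$ for small $\Delta\alpha$. The left endpoint is symmetric. The main obstacle throughout is, once again, establishing the existence and infinite-order flatness of the flat spot --- that is, the global analysis of the return map of the dissipative planar flow $\mathbf v_\eps$ near its broken separatrix net.
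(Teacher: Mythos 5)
Your high-level strategy matches the paper's: apply Fenichel to collapse \eqref{e:MR} to a dissipative flow on $\mathbb{T}^2$, take a return map to a transversal, recognize it as a member of a monotone one-parameter family of degree-one circle maps with flat spots, and invoke the Boyd--{\'S}wi{\k{a}}tek--Veerman theory of rotation numbers (the paper's Theorem~\ref{t:rotation-numbers}). Your linearization at the equilibria (repelling foci, dissipative saddles with eigenvalue ratio $\mu_i=(1+\eps\lambda_i)/(1-\eps\lambda_i)>1$) is also correct and is the right local heuristic for why the return map is contracting.

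There is, however, a genuine gap at the very step you flag as ``where the real work lies,'' and it is not merely a missing estimate but a wrong mechanism. You assert that the \emph{forward} return map $P_\alpha$ is \emph{constant} on a nondegenerate interval $J\subset\Sigma$ because the Dulac passages around the broken heteroclinic cycle ``collapse $J$.'' A forward return map of a planar $C^1$ flow is a homeomorphism onto its image (uniqueness of solutions), so it is injective on its domain of definition and cannot be constant on any nondegenerate arc; the Dulac compositions you describe produce arbitrarily strong contraction, never an actual collapse. The paper's flat spot arises by a different, complementary mechanism (Section~\ref{s:poinc} and Proposition~\ref{p:circle-maps-family}): the forward map $P$ has two \emph{jump discontinuities} at the points $b_1,b_2$ where the free stable separatrices of the two saddles meet the transversal, so that two whole arcs $I_1,I_2$ of the circle are \emph{missed} by the range of $P$ (they are filled by orbits escaping from the interior of the opened homoclinic loops, not by returns of points on $T$). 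The circle map with flat spots is the \emph{inverse} $Q=P^{-1}$, extended to be constant on $I_1,I_2$; it is $Q$, not $P$, that is the expanding flat-spot map to which the Veerman-type theory is applied (Definition~\ref{d:monotone} and Theorem~\ref{t:rotation-numbers}). Your ``super-stable periodic orbit'' picture is the contracting side of this duality and does not literally hold for $P$.

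Two further remarks. First, even granting the correct flat-spot construction, the paper still has to verify expansivity and parameter-monotonicity \emph{globally}, not just near the saddles: contraction of $P$ is obtained from a quantitative lower bound on $-\int \Div\mathbf f\,dt$ along return trajectories using the explicit divergence $\cos 2x+\cos 2y$ (Lemmas~\ref{e:int-div-poinc},~\ref{l:contract}), and monotonicity in $a$ is obtained by a flux/``tube'' argument (Appendix~\ref{a:monotone}), which is considerably more delicate than a first-order Melnikov sketch, especially for passages near the saddles. Second, the drift slope in the paper is the affine function $m=1-2\rho(\tilde Q)$ of the rotation number (Remark~\ref{r:rho-rho}), not a general fractional-linear one; this is harmless for the statement of the theorem but is the identity one actually needs to transport the Cantor-like/Hausdorff/steepness properties from $\rho$ to $m$.
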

\begin{figure}[t]
\centering
\begin{subfigure}{.4\textwidth}
 \centering
 \includegraphics[width=\linewidth]{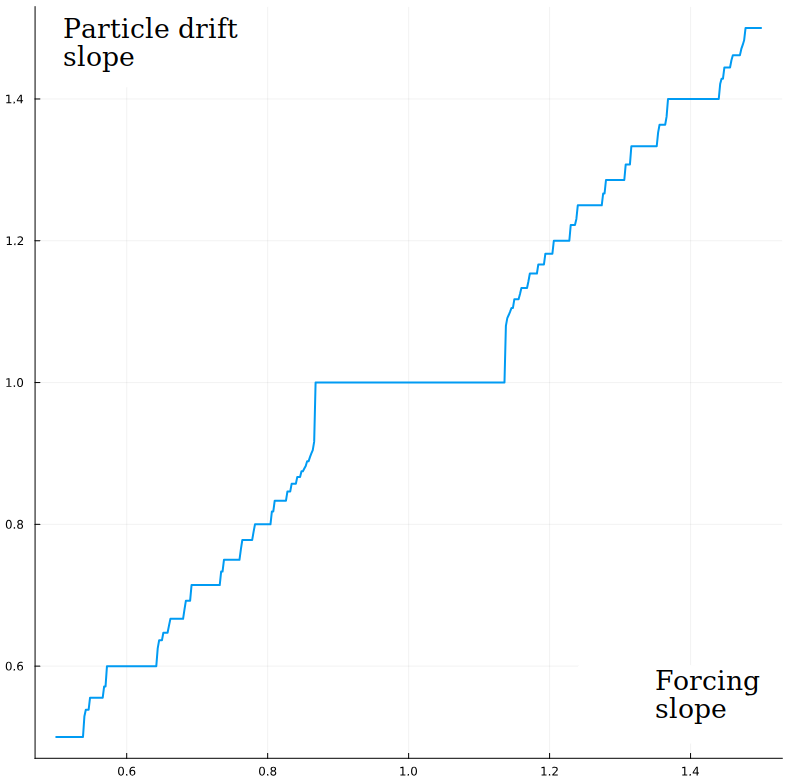}
 \caption{Cantor-like dependence of the particle drift slope on the direction of external force. Particle's drift direction is rational for Lebesgue-almost all directions of external force.}
 \label{f:staircase}
\end{subfigure}
\hspace{20mm}
\begin{subfigure}{.4\textwidth}
 \centering
 \includegraphics[width=\linewidth]{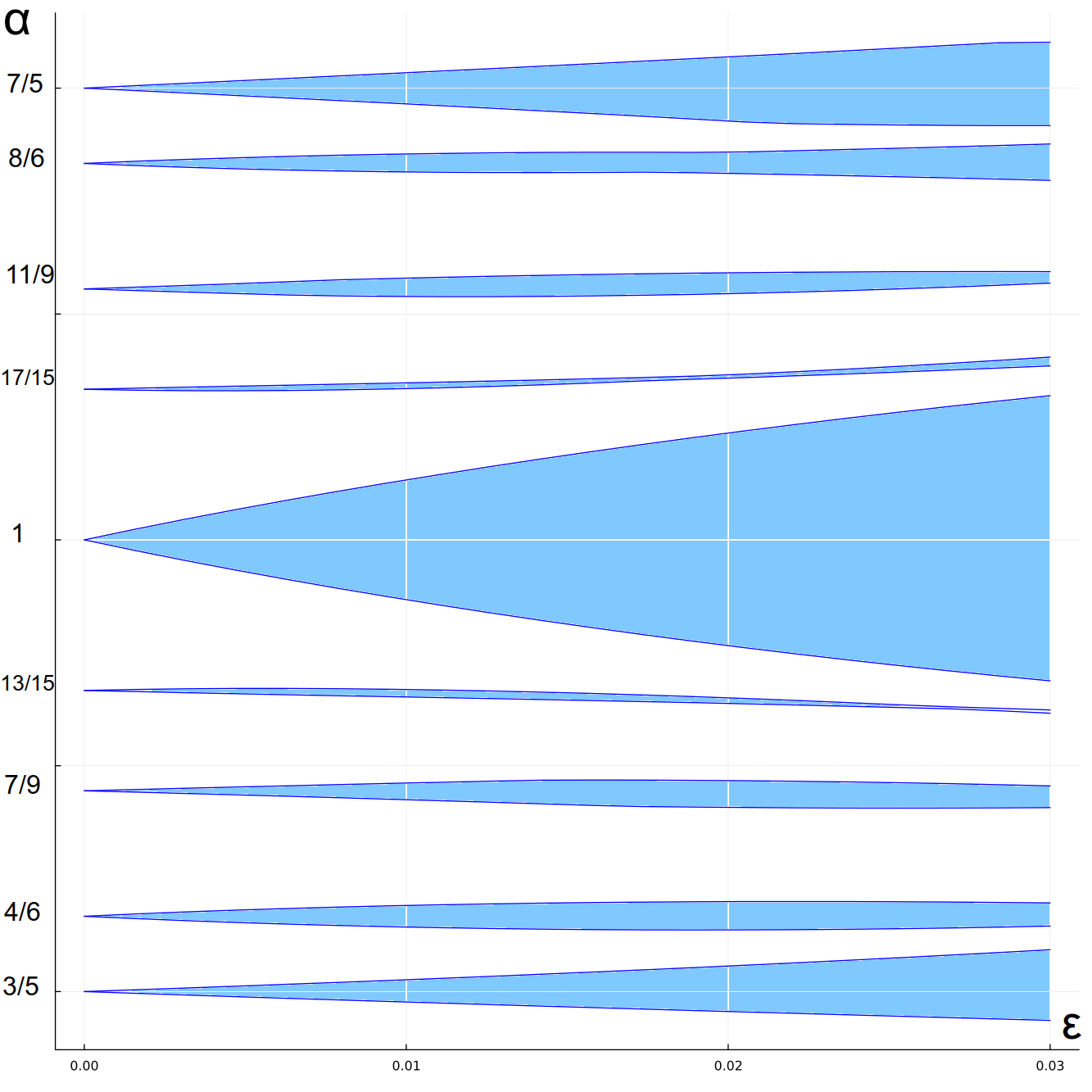}
 \caption{Arnold tongues~\cite{arnold1961small}, i.e., parameter values corresponding to a fixed rational particle drift slope, shown in blue. If all the tongues were shaded, they would occupy full measure.}
 \label{f:tongues}
\end{subfigure}
\caption{Particle drift direction}
\end{figure}

\begin{wrapfigure}{r}{0.6\textwidth}
 \center{ \includegraphics[width=0.9\linewidth]{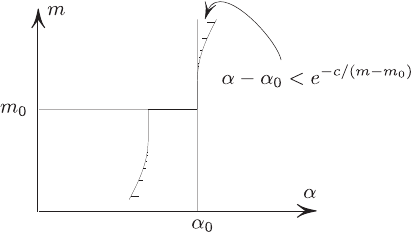}}
 \caption{Steepness of $ m(\alpha)$ near the ends of plateaus. }
 \label{fig:steepness}
\end{wrapfigure}
\noindent Particle drift slope $m(\alpha)$ can be described in terms of the rotation number of a family of circle maps, see Section~\ref{s:flat-spots} below.
It is a classical fact that rotation number of a typical monotone one-parameter family of circle diffeormorphisms is  a Cantor-like function with a plateau at each rational value;
a model example is the Arnold family~\cite{arnold1961small}  $x \mapsto x + \alpha + \eps \sin x$.
However, $m(\alpha)$ being rational on a full measure set of parameters is in marked contrast with the Arnold family,  where the rotation number is believed to be irrational on a set of positive measure for small $\eps$,~\cite{ecke1989scaling}.
This property  was observed for Cherry flows on the 2-torus~\cite{boyd1985structure}.
An interesting feature of our setting is that  while for $\eps = 0$ the measure of the set of $\alpha$ with   $m(\alpha) $  irrational is full, this measure collapses to zero as soon as $\varepsilon>0$, no matter how small.  That is, the blue tongues in Figure~\ref{f:tongues} occupy full measure (as Theorem~\ref{t:main} states), justifying the term "thick Arnold tongues".

It should be noted that even the unperturbed ($\varepsilon=0$) dynamics, that of the basic Hamiltonian system with the periodic+linear Hamiltonian~\eqref{e:fluid}, is suprisingly nontrivial.
As a side result of this article we describe the combinatorics of unbounded trajectories of this system, relating it to number-theoretic properties of $a/b$ in Section~\ref{s:Ham} below.
We should note that this reduced problem is a special case of a well-known problem studied by Arnold, Dynnikov, Novikov, Zorich and others~(\cite{novikov1982hamiltonian, arnold1991topological, zorich1999wind, dynnikov2022chaotic} and references therein): to describe the foliation of a closed surface given by a closed 1-form. In our case, the surface is the torus $\mathbb R^2 / 2\pi \mathbb Z^2$, and the 1-form is $dH$.

\subsection{Monotone families of circle maps with flat spots} \label{s:flat-spots}
In this section  we briefly explain the underlying dynamical mechanism behind the behavior of the particle drift slope described in Theorem~\ref{t:main}.

The particle dynamics given by~\eqref{e:MR} and~\eqref{e:fluid} can be reduced to a continuous circle map that is constant on two intervals and is monotone increasing elsewhere.
This reduction is done in Section~\ref{s:particle}.
Briefly, the flow of~\eqref{e:MR} in $\mathbb R^4$ has a globally attracting two-dimensional normally hyperbolic invariant manifold. This manifold  inherits the periodicity of  $ {\bf v}  $ in $ {\bf x} $ so that the problem reduces to the flow on a $ 2 $-torus.  This flow admits a transversal circle, and the resulting \poincare map of the circle   captures the entire dynamics of (\ref{e:MR}) in $ {\mathbb R} ^4 $.

This first return map turns out to be monotone increasing and continuous apart from  two jump discontinuities, as explained later in Section~\ref{s:poinc} (Figure~\ref{f:homoclinic-open}).
In particular, the particle drift slope is the rotation number of the circle map (up to a linear transformation).
So the dependence of the drift slope on the forcing slope is captured by the dependence of the rotation number on the parameter in a one-parameter family of circle maps.
For convenience we will consider the family of inverse maps extended to the jump intervals as a constant, i.e. a family of continuous degree one circle maps with flat spots.
The properties of this family are captured by the definition below.
Very similar\footnote{Up to some differences such as that expansivity condition can be removed, certain conditions on the derivative near the flat spot (such as having bounded variation) can be required, and often the case of just one flat spot is considered.} families naturally occur in the study of Cherry flows on torus and of truncations of non-invertible circle endomorphisms.
They were studied by Boyd, Boyland, Graczyk, Palmisano, {\'S}wi\k{a}tek, Veerman, and others (\cite{boyd1985structure, swikatek1989endpoints, veerman1989irrational, graczyk2017scalings, palmisano2021phase, boyland2024abundance} and references therein).

\begin{definition}[Monotone family of circle maps with flat spots] \label{d:monotone}
  Let $f_s$ be a one-parameter family of circle maps $f_s: S^1\mapsto S^1$, where $S^1 = \mathbb R / \mathbb Z$ defined for $s \in [s_{min}, s_{max}]$ and continuous with respect to both $x$ and $s$.
  Let $\tilde f_s: \mathbb R \mapsto \mathbb R$ be a lift of $f_s$.
  We will say that $f_s$ is a \emph{monotone family}, if
  \begin{enumerate}
   \item $\tilde f_s(x)$ is continuous and non-decreasing with respect to both $x$ and $s$.
   \item $f_s$ has degree $1$ for all $s$.
   \item There are $m$ flat spots for each $s$, where $ m\geq 1 $ is an integer independent of $s$. More precisely, there exists $m \ge 1$ and $m$ disjoint closed arcs $I_j(s)$ such that $f_s|_{I_j}$ is constant. The endpoints of these arcs depend on $s$ continously.
   \item \label{prop:expand} There exists $\lambda > 1$ such that for each $s$ the map $f_s$ is $\lambda$-expanding outside $\cup_{j=1}^{m} I_j$: $\tilde f_s(x)$ is differentiable in $x$ and
   $\frac{d}{dx} \tilde f_s(x) > \lambda$.
   \item \label{prop:flat-spots} Pick any lifts $\tilde I_j$, $j=1, \dots, m$ of the flat spots and let $\tilde b_j(s) = \tilde f_s(\tilde I_j(s))$ be the ``height" of the plateau above $\tilde I_j$. Then, each of these heights is differentiable with respect to $s$ and increases with the speed separated from zero: there exists $\nu > 0$ such that $\frac{d}{ds} \tilde b_j(s) > \nu$ for all $s$, $j=1, \dots, m$.
  \end{enumerate}
\end{definition}

The same argument as for circle homeomorphisms shows that such maps have a rotation number $\rho(s)$ which is also continuous and monotone in the parameter $s$, and the rotation number is rational if and only if there is a periodic orbit. Moreover, one can show that the rotation number is rational if and only if the image of the flat spot intersects itself, so a rational rotation number corresponds to a whole interval of the values of the parameter $s$. Hence, the graph of $\rho(s)$ is a Cantor-like function increasing when $\rho$ is irrational and with plateaus at rational values of $\rho$. This is also common for circle homeomorphisms, e.g. for the Arnold family $f_s(x) = x + s + \eps \sin x$, $\eps \in (0, 1)$. A remarkable constrast with the case of homeomorphisms is that for maps with flat spots the set of the values of $s$ with irrational rotation number has zero Lebesgue measure and, moreover, zero Hausdorff dimension. This was first proved by Boyd for a special case of families of the form $f_s(x) = f_x + s$ and then for generic classes of families of maps with \emph{one flat spots} by {\'S}wi{\k{a}}tek~\cite{swikatek1989endpoints} and Veerman~\cite{veerman1989irrational}.
Veerman also remarked that this statement can be generalized for any number of flat spots.
We do this by extensive use of the ideas from~\cite{veerman1989irrational}.\footnote{
   Instead of the expansivity (Property~\ref{prop:expand} in Definition~\ref{d:monotone}) Veerman~\cite{veerman1989irrational} requires that $\ln \big( \frac{d}{dx} f_s \big)$ has bounded variation outside the flat spot and uses this condition to prove a weaker form of expansivity.
   In the family originating from particle's dynamics this variation is unbounded by Remark~\ref{r:sing-derivative} below, but expansivity holds, which motivates this difference compared to the setting in~\cite{veerman1989irrational}. Also, this family has two flat spots, so we need to consider the case of more than one flat spot.
 }.
\begin{theorem} \label{t:rotation-numbers}
 Suppose $f_s$, $s \in [s_{min}, s_{max}]$ is a monotone family of circle maps with flat spots, and $\tilde f_s$ is its lift to $\mathbb R$.
 Then
\begin{enumerate}
 \item The rotation number
  \[
    \rho(s) = \lim_{n \to \infty} \frac{(\tilde f_s)^n(\tilde x) - \tilde x}{n}
  \]
  exists\footnote{That is, the limit exists and does not depend on the initial point $\tilde x \in \mathbb R$.}. Moreover, $\rho(s)$ is continuous and non-decreasing.
 \item For any $\frac p q \in \mathbb Q$, its preimage $\rho^{-1}(p/q)$ is a closed interval with positive length (unless it is an empty set).
 For any $r \not \in \mathbb Q$, its preimage $\rho^{-1}(r)$ is a point (unless it is an empty set).
 \item The set $C = \rho^{-1}([s_{min}, s_{max}] \setminus \mathbb Q)$ is a nowhere dense perfect set.
 \item The Hausdorff dimension  $\dim_H(C) = 0$.
 \item
 The function $\rho(s)$ is ``steep to all orders'' at the endpoints of each plateau $\rho^{-1}(p/q)$, as described in Theorem~\ref{t:main}.

\end{enumerate}
\end{theorem}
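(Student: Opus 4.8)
The plan is to prove the five items in the order stated, with items (3)--(4) forming the technical core, which I would carry out along the lines of Veerman~\cite{veerman1989irrational}, adapted to the expansivity hypothesis (Property~\ref{prop:expand}, in place of bounded variation of $\log\tilde f_s'$) and to an arbitrary number $m\ge 1$ of flat spots. Item (1) is the classical rotation-number argument, which uses only that $\tilde f_s$ is a continuous nondecreasing degree-one lift: $n\mapsto\tilde f_s^{\,n}(\tilde x)-\tilde x$ is almost subadditive, so $\rho(s)$ exists independently of $\tilde x$; monotonicity in $s$ follows from $\tilde f_{s_1}\le\tilde f_{s_2}$ pointwise (hence $\tilde f_{s_1}^{\,n}\le\tilde f_{s_2}^{\,n}$ and $\rho(s_1)\le\rho(s_2)$); and continuity in $s$ from the criterion $\rho(s)\ge p/q\iff\exists\,x:\ \tilde f_s^{\,q}(x)\ge x+p$, each side of which is a closed condition, so every sub- and super-level set of $\rho$ at a rational is closed. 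For the rational part of item (2): if $\rho(s_0)=p/q$, the nondecreasing degree-one map $g=\tilde f_{s_0}^{\,q}-p$ has rotation number $0$, hence a fixed point, and a parity argument for the sign of $g(x)-x$ around $S^1$ shows $g$ cannot have all of its fixed points expanding; thus some periodic point $z$ has a one-sided derivative $(\tilde f_{s_0}^{\,q})'(z)\le 1$, and by Property~\ref{prop:expand} the orbit of $z$ must meet a flat spot. Hence $\rho(s)\in\mathbb Q$ exactly when the forward orbit of some plateau value $\tilde b_j(s)$ re-enters (a lift of) $\tilde I_j$; this set is $\rho^{-1}(p/q)$, a closed interval by item~(1), and it is nondegenerate because Properties~\ref{prop:expand} and~\ref{prop:flat-spots} force the return position $s\mapsto\tilde f_s^{\,Q-1}(\tilde b_j(s))$ (for a suitable $j$ and minimal $Q$, along a flat-spot-free segment) to move with derivative $\ge\nu\lambda^{\,Q-1}$ while the target flat-spot arc moves continuously, so ``the return position lies inside that arc'' holds on an open set of $s$.

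For items (3)--(4), put $C=\{s:\rho(s)\notin\mathbb Q\}$. The characterization above shows $s\in C$ iff no flat-spot orbit is eventually periodic; since for irrational rotation number a flat-spot orbit can meet flat spots only finitely often (there are $m$ of them and it cannot revisit one without becoming periodic), one reduces, on each of countably many combinatorial classes, to orbits that eventually avoid all flat spots forever. The crucial estimate transfers the phase-space expansion to parameter space: if $y_n(s):=\tilde f_s^{\,n}(\tilde b_j(s))$ and $\{y_n\}_{n\le N}$ avoids all flat spots, then on a connected parameter interval the $y_n$ are nondecreasing in $s$ and their graphs avoid the flat spots, so the mean-value theorem together with Properties~\ref{prop:expand}--\ref{prop:flat-spots} gives $y_N(s_2)-y_N(s_1)\ge\nu\lambda^{\,N}(s_2-s_1)$. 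Since $f_s$ has degree one, the preimages of the flat spots are $m$ arcs of controlled size, and one deduces that the set $E_N$ of parameters keeping the relevant orbits flat-spot-free for $N$ steps has components of length $\lesssim\lambda^{-N}$ and at most $D^N$ components for a fixed $D$; as $C\subseteq\bigcap_N E_N$ up to the countable reduction, this already yields $|C|=0$ and $\dim_H C\le\log D/\log\lambda$.

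Sharpening $\dim_H C\le\log D/\log\lambda$ to $\dim_H C=0$ is the crux, and is where I expect the main obstacle: following Veerman, one analyzes a flat-spot orbit between consecutive close returns to a neighborhood of the flat spot; over one such return cycle the accumulated expansion is enormous while the number of newly created parameter intervals grows only mildly, so the dimension contributed at each renormalization level tends to $0$, and summing over levels gives $\dim_H C=0$. Keeping the branching of parameter intervals controlled relative to this super-exponential expansion, uniformly over scales, is the delicate point. Once $|C|=0$ is known, the remainder of items (2)--(4) is soft: $C$ has empty interior, and since $\overline C\setminus C$ is countable, $C$ is nowhere dense; $C$ has no isolated points because an interval cannot be partitioned into countably many disjoint closed sets; and since a single irrational value attained on an interval would force an interval into $C$, every $\rho^{-1}(r)$ with $r\notin\mathbb Q$ is a point, completing item~(2).

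For item (5), I would localize the same estimates near the right endpoint $\alpha_0$ of a plateau $\rho^{-1}(p/q)$. For $s>\alpha_0$ close to $\alpha_0$ the flat-spot orbit shadows the former period-$q$ periodic orbit for $N(s)$ steps before re-entering a flat spot, and one checks $N(s)\gtrsim 1/\big(\rho(s)-p/q\big)$ (changing the rotation number off $p/q$ by only $\Delta\rho$ requires a near-period of length $\gtrsim 1/\Delta\rho$). On the other hand $s$ lies in the component of $E_{N(s)}$ anchored at $\alpha_0$, of length $\lesssim\lambda^{-N(s)}$; combining, $s-\alpha_0\lesssim\lambda^{-c/\Delta\rho}=e^{-c'/\Delta m}$, and absorbing constants for $s-\alpha_0$ small gives $\Delta\alpha<e^{-c/\Delta m}$. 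The left-endpoint statement is symmetric, and $(\Delta m)^n\gg\Delta\alpha$ follows at once.
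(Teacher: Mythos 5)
Your treatment of items (1)--(3) is essentially the paper's (existence, monotonicity and continuity of $\rho$ by the standard arguments for nondecreasing degree-one lifts, the characterization of rational rotation numbers through a flat-spot orbit that becomes periodic, and the soft topological consequences); likewise your item (5) gives the right shape of estimate. The problem is item (4), the central claim, where your argument has a genuine gap that you yourself flag. What you actually establish is that the ``flat-spot-avoidance'' sets $E_N$ have components of length $\lesssim\lambda^{-N}$ and --- by your own count --- as many as $D^N$ of them, giving only $\dim_H C\le\log D/\log\lambda>0$. The passage to $\dim_H C=0$ is then described as a Veerman-style renormalization over return cycles, with the key step (``keeping the branching of parameter intervals controlled relative to this super-exponential expansion, uniformly over scales'') declared delicate and left unresolved. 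That is not a proof; it is the statement of an obstacle, and it is precisely where the issue lies: decomposing $C$ by combinatorial itinerary classes is exponentially branching, so a covering bound of the form (number of pieces)$\times$(size$)^d$ does not vanish for all $d>0$ without a substantial further argument.

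The paper sidesteps this entirely by choosing a different decomposition. Set
$C_N=\{s:\rho(s)\notin\mathbb Q\text{ or }\rho(s)=p/q\text{ with }q>mN\}$,
so that $C=\bigcap_N C_N$, and observe two things. First, a pigeonhole argument shows that if $s_0\in C_N$ then \emph{some} plateau value $\tilde b_j(s_0)$ has an orbit disjoint from all flat spots for $N-1$ steps (otherwise one builds a cycle through the $m$ flat spots and obtains a periodic orbit of period at most $mN$, contradicting $s_0\in C_N$). This lets the expansion estimate $\psi_N(s_2)-\psi_N(s_1)\ge\nu\lambda^{N-1}(s_2-s_1)$, with $\psi_N(s)=\sum_j\tilde f_s^{\,N-1}(\tilde b_j(s))$, apply on a neighborhood of every point of $C_N$, forcing every subinterval of $C_N$ to have length $O(N\lambda^{-N})$. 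Second --- and this is what replaces your $D^N$ --- the set $C_N$ is obtained from $[s_{\min},s_{\max}]$ by removing the plateaus $\rho^{-1}(p/q)$ for all rationals $p/q$ with $q\le mN$, and there are only $O(N^2)$ such rationals in a bounded range. So $C_N$ is a finite union of $O(N^2)$ intervals, each of length $O(N\lambda^{-N})$, and hence $m_d(C_N)=O(N^{2+d})\lambda^{-dN}\to 0$ for every $d>0$, giving $\dim_H(C)=0$ directly, with no renormalization, and without first proving $|C|=0$. The polynomial-versus-exponential count of covering pieces is the missing idea in your proposal; once you index by denominator of the rotation number rather than by itinerary, the rest of your estimates (and your item (5) derivation) go through as you describe.
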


\noindent This result explains the properties of $\rho(\alpha)$ claimed in Theorem~\ref{t:main}. It is proved in Section~\ref{s:dim-H} relying extensively on the ideas from~\cite{veerman1989irrational}.

\section{The Hamiltonian flow} \label{s:Ham}
Before studying particle transport in the sections that follow, let us make some observations on trajectories of the Hamiltonian flow~\eqref{e:fluid}:
\begin{equation} \label{e:fluid-ode}
 \dot x = -\cos x \sin y + b, \qquad
 \dot y = \sin x \cos y + a.
\end{equation}
For $a = b = 0$ the phase portrait is very simple, consisting of the square  grid of separatrices (Figure~\ref{fig:cells}) with interiors of the squares filled by periodic orbits. Saddles form the lattice $(\frac \pi 2 + \pi k_1, \frac \pi 2 + \pi k_2)$, where $(k_1, k_2) \in \mathbb Z^2$.
\begin{figure}[H]
 \centering
 \includegraphics[scale=0.3]{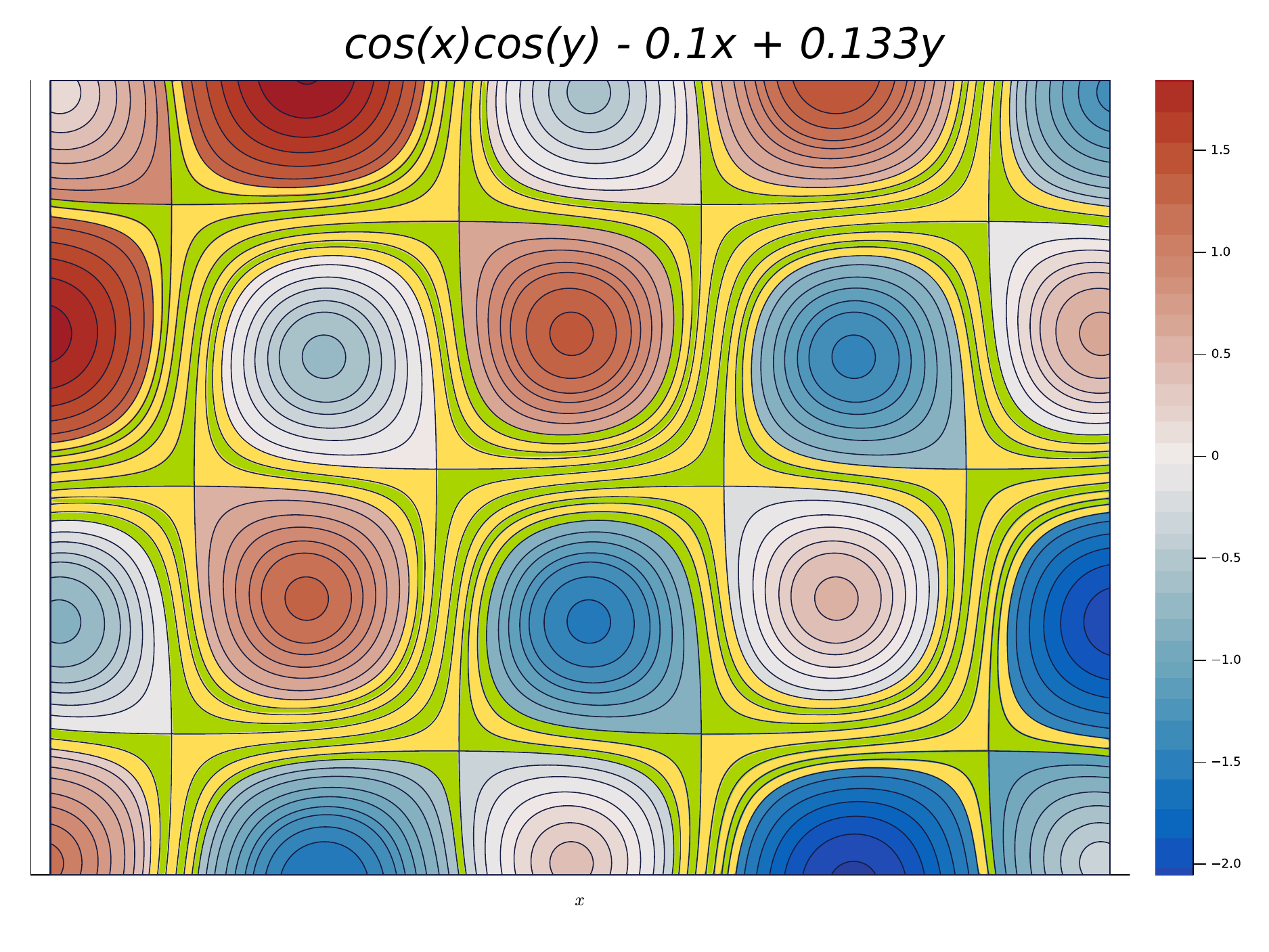}
 \caption{Level lines of the Hamiltonian~\eqref{e:fluid}. ``Channels" are highlighted in yellow and green.}
 \label{f:level-lines}
\end{figure}
\noindent For $a, b > 0$ the phase portrait becomes somewhat nontrivial despite the simplicity of the Hamiltonian.
For small $ a $, $b$ most of the phase space will still be filled by periodic trajectories bounded by homoclinic loops; however, certain ``channels"  appear, (Figure~\ref{f:level-lines}); these channels hug the original grid of the flow with $a=b=0$, and all forward trajectories in these channels are now unbounded, with the exception of a zero measure set of those lying on stable manifolds of saddles.
Any such unbounded trajectory stays within a bounded distance from the straight line
$by-ax=0$ (Figure~\ref{f:chess}), since $by-ax = H - \cos x \cos y$, where $H$ is constant along a trajectory, and $\cos x \cos y$ is bounded.
So, the asymptotic direction of Hamiltonian trajectories is the same as the direction of the forcing vector $(b, a)$ in~\eqref{e:fluid-ode}. We shall see later that this is decidedly not the case for particles carried by the flow.

\begin{figure}[h]
\centering
\begin{subfigure}{.5\textwidth}
 \centering
 \includegraphics[width=0.9\linewidth]{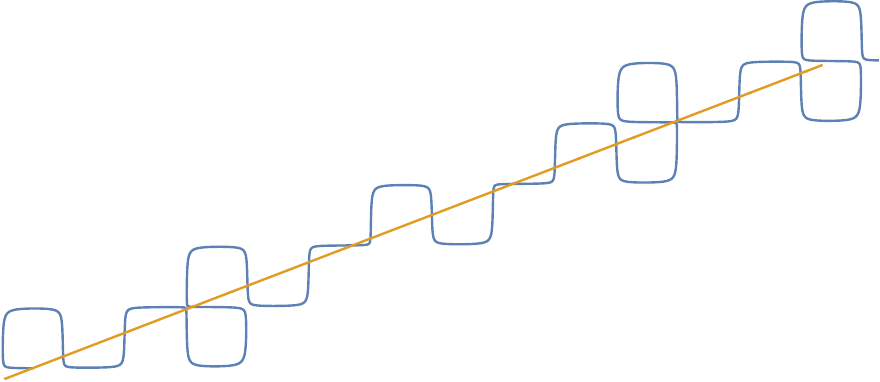}
 \caption{A trajectory follows a line with the slope $a/b$}
 \label{f:chess}
\end{subfigure}
\hspace{5mm}
\begin{subfigure}{.3\textwidth}
 \centering
 \includegraphics[width=\linewidth]{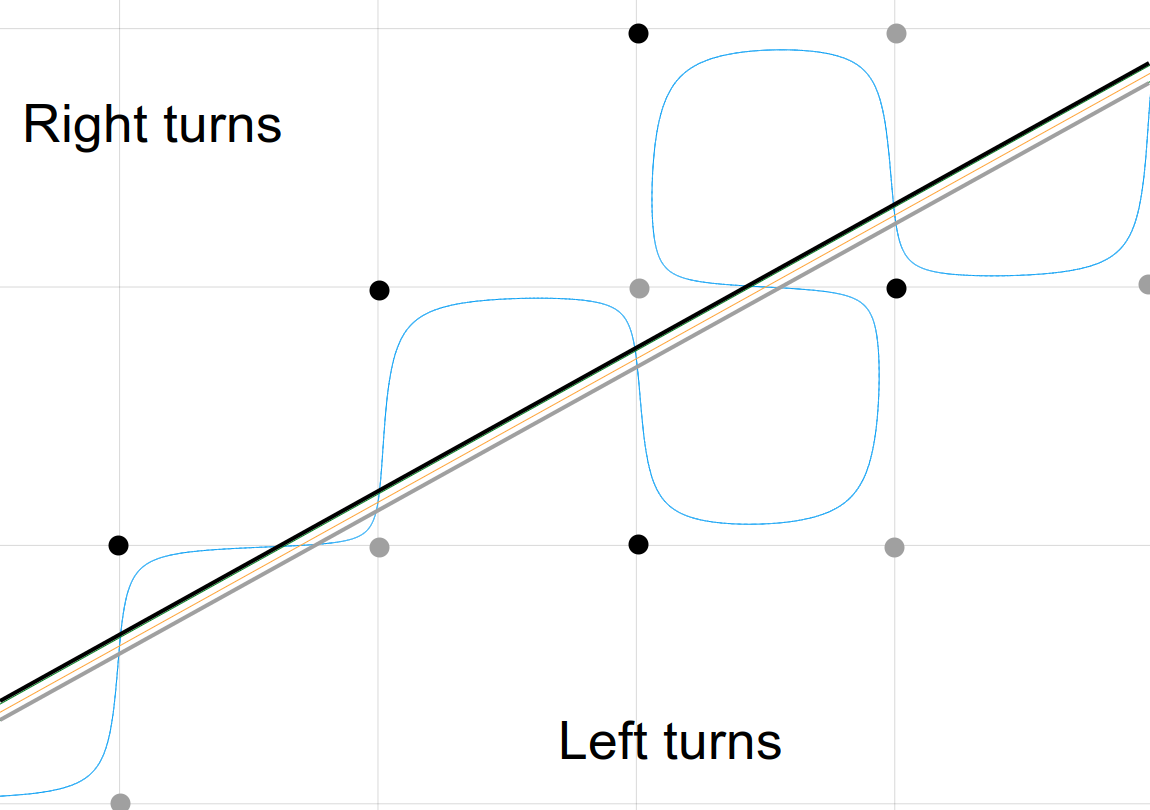}
 \caption{Turn rules}
 \label{f:2-lines}
\end{subfigure}
\caption{ Fluid trajectories given by~\eqref{e:fluid-ode} }
\end{figure}

\medskip
\subsection*{Combinatorics of Hamiltonian trajectories}
{\bf A ``chess'' game.} We now describe the combinatorics of unbounded Hamiltonian trajectories. This part is not used in the analysis of particle dynamics that leads to   Theorem~\ref{t:main}.

For small $a$ and $b$, any unbounded trajectory is close to the net of separatrices of the unperturbed system with $a=b=0$, (Figure~\ref{f:chess}), and can be approximated by a broken path formed by abutting edges of the net. Every trajectory approaching a vertex makes a turn, either to the left or to the right depending on which side of the stable manifold the trajectory enters the saddle's neighborhood (the trajectories we consider are unbounded and hence they do not lie on the stable manifolds of the saddles).
The following ``chess rule'' gives a simplified combinatorial description of unbounded trajectories (such as one in Figure~\ref{f:chess}). This description is valid for small $a$ and $b$; to be specific, we take $ |a|, \  |b|< 0.1 $.
\vskip 0.1 in
Consider a chess coloring of the vertices of the square lattice $\{\frac \pi 2 + \pi \mathbb Z\}^2$ of the saddles of the Hamiltonian with $ a=b=0$.  Namely, we call a vertex $(\frac \pi 2~+~\pi k_1, \frac \pi 2~+~\pi k_2)$ \emph{odd} if $k_1+k_2$ is odd and \emph{even} otherwise. Also, consider two oriented parallel straight lines $by-ax=c_o$ and $by-ax=c_e$\footnote{The constants $c_o$ and $c_e$ depend on the value of $H$ at the trajectory we are describing and are specified below in Appendix~\ref{a:saddles}.} corresponding to odd and even vertices, respectively (Figure~\ref{f:2-lines}).
Assume that both lines do not pass through grid vertices (this is true if the considered trajectory is unbounded).
Focusing our attention on the odd vertices first, the ``odd'' line  divides these vertices into the left class and the right class; we label these odd vertices by $L$ or $R$ according to their class. Similarly, even vertices are divided by the ``even'' line into left and right classes, and again we label these by $ L$ or $ R $ accordingly.   All vertices are now labeled, and we state {\it the path-generating rule}. Our path will consist of abutting horizontal and vertical edges connecting neighboring vertices. As we travel along an edge and meet a vertex, we turn right if the point is labeled $R$ and turn left if the point is labeled $L$.

The idea behind this description is very simple: to understand if a right or a left turn happens at a vertex, one needs to compare the {\it constant} value of $H$ along the trajectory with the value of $H$ at the saddle of~\eqref{e:fluid-ode} near this vertex. The precise proof is given in Appendix~\ref{a:saddles}.

\medskip

\subsection*{Area-preserving flows on surfaces and Novikov's problem.}
To put the Hamiltonian case in proper context, let us note that this is a particular case of the following much harder problem: study the foliation of a closed surface given by a closed 1-form.\footnote{The following equivalent (\cite{arnold1991topological}, see also~\cite{zorich1999wind}) problem is known as Novikov's problem on the semiclassical motion of an electron. Consider a level surface of some smooth function in the 3-torus $\mathbb T^3$. Lift it to a periodic surface in $\mathbb R^3$ and intersect it by an arbitrary plane $\mathbb R^2$. What can be said about the connected components of this intersection?} In our case, the surface is $\mathbb T^2$ and the $1$-form is $dH$.
For the torus case, Arnold~\cite{arnold1991topological} showed that,
given that the periods of the 1-form are not commensurate, under a mild genericity condition there exists a transversal such that the corresponding first return map is conjugate to an irrational rotation by the angle equal to the ratio of the periods. As in our case, there might also be topological discs formed by closed finite leaves of the foliation (homoclinic loops). For higher genus surfaces, the foliation can be much more complicated; this is known as \emph{chaotic regime}. This phenomenon was extensively studied by Dynnikov with several co-authors (cf.~\cite{dynnikov1997semiclassical, dynnikov2022chaotic} and references therein).
For more background on this problem, we direct the reader to the survey by Zorich~\cite{zorich1999wind}.
Another rich research area is mixing properties of area-preserving surface flows resticted to quasi-minimal components (such as the complement to the union of the interiors of homoclinic loops in our case, provided that $a/b$ is irrational).
An up-to-date discussion on this topic can be found in~\cite{fayad2023non}.

\section{From particle dynamics to a circle map} \label{s:particle}

\subsection{Reduction to a torus flow} \label{ss:fenichel}
\textbf{Two-dimensional attracting manifold.}
The simplified Maxey-Riley equation~\eqref{e:MR} is a singular perturbation of the flow $\dot{\bf x} = {\bf v}(\bf{x})$ in $\mathbb R^2$.
A general fact from singular perturbation theory commonly used in literature on particle transport (e.g.,~\cite{haller2008inertial}) is that for small enough $\eps$ the flow given by \eqref{e:MR} in $ {\mathbb R} ^4 $   can be reduced to a small perturbation (of order $\eps$) of the original  flow in $ {\mathbb R} ^2 $:
$\dot{\bf x} = {\bf v}(\bf{x}) + \eps {\bf f}(\bf{x}, \eps)$.
This application of Fenichel's results~\cite{fenichel} on normal hyperbolicity is done by Rubin, Jones, and Maxey~\cite{rubin1995settling} (see also the book~\cite{jones1995geometric}) for cellular flow the case when gravity is aligned with cell orientation ($a = 0$), but can be done in the same way for any ${\bf v}$.
To that end, let us rewrite~\eqref{e:MR} as a first order system:
\begin{equation} \label{e:MR-x-y}
 \dot{\bf x} = {\bf y}, \qquad \dot {\bf y} = - \frac{1}{\eps} ( {\bf y} - {\bf v} ( {\bf x} ) ),
 \qquad ({\bf x}, {\bf y}) \in \mathbb R^4.
\end{equation}
Applied to this problem, Fenichel's results imply the following proposition. We give just a quick sketch of the proof in Appendix~\ref{s:Fenichel-details} following the book~\cite{jones1995geometric} where a full proof can be found.
\begin{proposition} \label{p:fenichel}
  For any $M > 0$ there exists $\eps_0$ such that for any $a$, $b$, and $\eps$ with $|a|, |b| < M$ and $|\eps| < \eps_0$:
  \begin{enumerate}
      \item  The system~\eqref{e:MR-x-y},~\eqref{e:fluid} has an invariant manifold $\mathcal M_\eps$. It is globally attracting when $\eps>0$.
      For any $r \in \mathbb N$,  $\mathcal M_\eps$ is $O(\eps)$-close in $C^r$ to the manifold $\mathcal M_0$ given by ${\bf y} = {\bf v}({\bf x})$.

      \item The phase space $\mathbb R^4$ is foliated by stable fibers of the points of $\mathcal M_\eps$.
      Those fibers are two-dimensional manifolds, they are $C^r$ for any $r$, and for any point $q \in \mathbb R^4$ lying on the stable fiber of $p \in \mathcal M_\eps$, its forward orbit exponentially converges to the orbit of $p$.

      \item The dynamics on the invariant manifold $\mathcal M_\eps$
       can be written using ${\bf x}$ as a coordinate as
       \begin{equation} \label{e:perturbed}
          \dot {\bf x} = {\bf v} (\bf x) + \eps {\bf f}(\bf x, \eps),
       \end{equation}
        where ${\bf f} = {\bf f}({\bf x}, a, b, \eps)$ is $C^r$ for any\footnote{The required smallness of $\eps$ depends on $r$.} $r$ and periodic in ${\bf x}$.
        Here ${\bf f}({\bf x})$ is some non-Hamiltonian vector field with the leading terms given by ${\bf f} = - D{\bf v} \cdot {\bf v} + O(\eps)$, where $D{\bf v}$ is the Jacobian matrix of ${\bf v} (\bf x)$.
  \end{enumerate}
\end{proposition}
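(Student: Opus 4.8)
The plan is to obtain Proposition~\ref{p:fenichel} as an application of Fenichel's geometric singular perturbation theory; the only non-routine ingredient is a reduction to a compact phase space, forced by the non-compactness of the critical manifold. First I would rescale time by $\tau = t/\eps$, turning~\eqref{e:MR-x-y} into the slow--fast system
\[
  {\bf x}' = \eps\, {\bf y}, \qquad {\bf y}' = -\big({\bf y} - {\bf v}({\bf x})\big)
\]
(here $\,{}' = d/d\tau$). At $\eps = 0$ the layer problem ${\bf x}' = 0$, ${\bf y}' = -({\bf y}-{\bf v}({\bf x}))$ has critical manifold $\mathcal M_0 = \{{\bf y} = {\bf v}({\bf x})\}$, a graph over the ${\bf x}$-plane whose transverse linearization in ${\bf y}$ is $-I$, with the single eigenvalue $-1$ of multiplicity two; thus $\mathcal M_0$ is normally hyperbolic and purely attracting, with no center directions off $\mathcal M_0$.

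Next I would compactify. By $2\pi$-periodicity of ${\bf v}$ in ${\bf x}$ the system descends to $\mathbb T^2 \times \mathbb R^2$, and a Lyapunov estimate for $\phi = |{\bf y} - {\bf v}({\bf x})|^2$ gives, in the original time and for $\eps$ small, $\dot\phi \le -(2\eps)^{-1}\phi + C$ with $C$ depending only on $\|{\bf v}\|_{C^1}$; hence $\mathcal K = \{\phi \le 1\}$ is positively invariant, globally attracting in forward time, and contains $\mathcal M_0$. Truncating the vector field outside a slightly larger compact set $\mathbb T^2 \times \overline{B_R}$ --- which leaves the dynamics on $\mathcal K$ untouched --- reduces everything to a compact phase space, on which Fenichel's theorems apply verbatim. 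They provide, for each $r$ and all small $\eps > 0$: a locally invariant $C^r$ manifold $\mathcal M_\eps$ that is $O(\eps)$-$C^r$-close to $\mathcal M_0$; a $C^r$ stable (Fenichel) fibration of a neighborhood of $\mathcal M_\eps$ along whose fibers forward orbits converge exponentially to orbits on $\mathcal M_\eps$; and, the normal bundle being entirely stable, local attraction of $\mathcal M_\eps$. Local attraction together with the global attraction of $\mathcal K$ upgrades $\mathcal M_\eps$ to a global attractor, and the fibration extends to all of $\mathbb R^4$ by backward flow since every orbit eventually enters the fibered neighborhood; lifting back to $\mathbb R^4$ gives items (1)--(2). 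Uniqueness of $\mathcal M_\eps$ together with periodicity of ${\bf v}$ forces $\mathcal M_\eps$ to be invariant under the $2\pi\mathbb Z^2$-translations.

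It remains to read off the reduced dynamics. Being $C^r$-$O(\eps)$-close to the graph $\mathcal M_0$, the manifold $\mathcal M_\eps$ is itself a graph ${\bf y} = {\bf v}({\bf x}) + \eps\,{\bf f}({\bf x},\eps)$ with ${\bf f}$ of class $C^r$ and, by the translation invariance above, $2\pi$-periodic in ${\bf x}$; inserting this into $\dot{\bf x} = {\bf y}$ gives~\eqref{e:perturbed}. For the leading term, differentiate the graph relation along the flow and use $\dot{\bf y} = -\eps^{-1}({\bf y}-{\bf v}) = -{\bf f}$:
\[
  -{\bf f} = D{\bf v}({\bf x})\,\dot{\bf x} + \eps\, D_x{\bf f}\cdot\dot{\bf x} = D{\bf v}({\bf x})\big({\bf v}({\bf x}) + \eps{\bf f}\big) + O(\eps),
\]
whence ${\bf f} = -D{\bf v}\cdot{\bf v} + O(\eps)$, which is item (3); that the limiting field $-D{\bf v}\cdot{\bf v}$ is genuinely non-Hamiltonian follows from $\Div(D{\bf v}\cdot{\bf v}) = -2\det(\mathrm{Hess}\,H) \not\equiv 0$, using $\Div{\bf v} = 0$.

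The main obstacle is precisely the non-compactness of $\mathcal M_0$: Fenichel's theorem is usually stated for compact normally hyperbolic manifolds, and quotienting by $2\pi\mathbb Z^2$ removes non-compactness in ${\bf x}$ but not in ${\bf y}$. The Lyapunov trapping argument disposes of this, provided one verifies that the truncation outside $\mathcal K$ changes nothing (it does not, since $\mathcal K$ is forward invariant and globally attracting, and $\mathcal M_\eps \subset \mathcal K$ for $\eps$ small) and that the Fenichel constants --- the $O(\eps)$ closeness, the $C^r$ bounds, the fiber contraction rate --- are uniform over ${\bf x} \in \mathbb T^2$, which holds by compactness of the torus. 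Since this reduction is carried out in full for $a = 0$ in~\cite{rubin1995settling, jones1995geometric}, only these uniformity remarks need to be recorded for general $(a,b)$.
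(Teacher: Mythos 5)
Your proposal follows the same route as the paper's own sketch: rescale time by $\tau = t/\eps$, identify the critical manifold $\mathcal M_0 = \{{\bf y} = {\bf v}({\bf x})\}$ as normally hyperbolic and purely attracting, apply Fenichel's first and third theorems to get $\mathcal M_\eps$ and the stable fibration, extend the fibration globally by forward absorption, and derive ${\bf f} = -D{\bf v}\cdot{\bf v} + O(\eps)$ by differentiating the graph relation. One thing you do more carefully than the paper: you explicitly repair the non-compactness of $\mathcal M_0$ via the Lyapunov trapping set $\mathcal K = \{|{\bf y}-{\bf v}({\bf x})|^2 \le 1\}$ on $\mathbb T^2\times\mathbb R^2$ and a truncation outside a compact set; the paper's sketch is silent on this, so your version is a genuine sharpening of the argument.

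There is, however, one point where you hand-wave and the paper does not. The proposition claims that a single $\eps_0$ works for all $|a|,|b| < M$ and that ${\bf f} = {\bf f}({\bf x},a,b,\eps)$ is $C^r$ in \emph{all} its arguments, including $(a,b)$. Your closing paragraph only addresses uniformity of the Fenichel constants over ${\bf x}\in\mathbb T^2$ and defers the rest to ``uniformity remarks,'' but never actually establishes either uniformity of $\eps_0$ in $(a,b)$ or smooth dependence of $\mathcal M_\eps$ (hence of ${\bf f}$) on $(a,b)$. The paper handles both at once with a specific device worth knowing: augment the system to six dimensions by adjoining $\dot a = 0$, $\dot b = 0$ as slow variables, apply Fenichel once to obtain a single four-dimensional slow manifold over $\{|a|,|b|<M\}$ that is $C^r$ in all variables including the parameters, and then slice it by fixing $(a,b)$. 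Your compactification argument is compatible with this (your Lyapunov bound depends only on $\|{\bf v}\|_{C^1}$, which is uniform over $|a|,|b|<M$), but you should either adopt the augmented-system trick or supply an explicit continuity/compactness argument for the dependence of the Fenichel data on $(a,b)$; as written, that part of the claim is left unproved.
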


\noindent  It should be noted that the leading term $- D{\bf v} \cdot {\bf v}$   of ${\bf f}$ is negative convective derivative of $ {\bf v} $, i.e. minus the acceleration of the trajectory of $ {\bf v} $.
If $ {\bf v} $ happens to be a Hamiltonian vector field (as in our case), then
$$
    {\bf f} = (H_{yy}H_x - H_{xy}H_y, \; H_{xx}H_y - H_{xy}H_x ) + O(\eps)
$$
Curiously, the divergence of the negative of convective derivative of a Hamiltonian flow is twice the Hessian determinant of $ H $:
$$
     \Div {\bf f}  = 2\; {\rm det}\; H ^{\prime\prime}+O(\eps).
$$

\medskip

\textbf{Choice of a fundamental domain.} The vector field ${\bf v}(x, y)$ given by the Hamiltonian~\eqref{e:fluid} is $2\pi$-periodic in both $x$ and $y$, and is also invariant under the shift $(x, y) \mapsto (x+\pi, y+\pi)$; this also holds for the perturbation ${\bf f}$.
So, we can take the rectangle
$[-\frac \pi 2, \frac \pi 2] \times [-\frac \pi 2, \frac {3\pi} 2]$ with the sides identified by the shifts
\begin{equation} \label{e:shifts}
 (x, y) \mapsto (x+\pi, y+\pi),
 \qquad
 (x, y) \mapsto (x, y+2\pi)
\end{equation}
as a fundamental domain for both the unperturbed Hamiltonian system $\dot {\bf x} = {\bf v}({\bf x})$ and the perturbed system~\eqref{e:perturbed}.
We will refer to the torus resulting from taking the factor over~\eqref{e:shifts} as $\mathbb T^2$.
This reduces the original four-dimensional system~\eqref{e:MR} to the flow~\eqref{e:perturbed} on the torus $\mathbb T^2$.
The flow in $ {\mathbb R} ^4 $ decomposes into the   product of the periodic flow~\eqref{e:perturbed} on $\mathbb R^2$ and a contraction in transversal $ {\mathbb R} ^2 $-fibers.

\subsection{\poincare map} \label{s:poinc}
\textbf{Choice of a transversal.}
Assuming $b>0$, the vertical lines $\tilde T_k = \{ (x, y) \in \mathbb R^2, \; x = \pi k - \frac \pi 2 \}$, $k \in \mathbb Z$ are transversal to the Hamiltonian flow by~\eqref{e:fluid-ode} since $\dot x=b$ there.
This transversality survives for all $\eps$ sufficiently small in the perturbed flow~\eqref{e:perturbed}.
We choose $\tilde T_k$ as Poincare sections for the flow on $\mathbb R^2$; on the torus they all correspond to the same circle that we denote $T$.
Let us use
\[
  z = \frac{y-x}{2\pi}
\]
as a coordinate on each $\tilde T_k$.
As $z$ is invariant under the shift $(x, y) \mapsto (x+\pi, y+\pi)$ identifying $\tilde T_k$ with $\tilde T_{k+1}$, it is a well-defined coordinate on $T$, and it turns $T$ into the \emph{standard} circle $\mathbb R / \mathbb Z = S^1$.
Let $P: S^1 \mapsto S^1$ denote the first return map to $T$.
 $P$ is an increasing degree one circle map since  different trajectories of~\eqref{e:perturbed} do not intersect.
However, $P$ is undefined at certain points (we will later see that there are two such points).
Now, assume $b > 0$. Then $P$ has a natural lift $\tilde P: \mathbb R \mapsto \mathbb R$ which is the \poincare map from $\tilde T_k$ to $\tilde T_{k+1}$ using $z$ as a coordinate on both transversals. It does not depend on $k$ as both $z$ and the flow are invariant by the shift $(x, y) \mapsto (x+\pi, y+\pi)$.

\begin{figure} [H]
 \centering \includegraphics{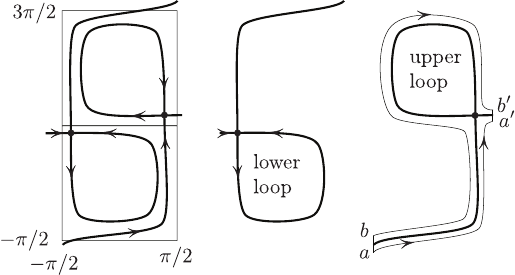}
 \caption{ The first return map $P_0$ of the Hamiltonian system is continuous: $ \lim_{|ab| \rightarrow 0 }|a ^\prime b ^\prime | = 0 $. }
 \label{fig:homoclinic}
\end{figure}

\textbf{Hamiltonian flow: the first return map is a pure rotation.}
  Here we briefly describe the unperturbed flow~\eqref{e:fluid} on the torus when $a, b \in (0, 0.1)$; proofs can be found in Appendix~\ref{a:saddles}. Let $P_0$ be the first return map for the Hamiltonian flow, and let $\tilde P_0$ be a lift of $P_0$ to the real line.
  This flow has two saddles, each having one homoclinic loop (Figure~\ref{fig:homoclinic}). The first return map $P_0$ is defined everywhere except for two points where stable manifolds of the saddles intersect the transversal $T$.
  A formula for $P_0$ arises from the fact that the Hamiltonian $H$ is preserved by the flow.
  Let us choose $H$ as the coordinate on $\tilde T_0$; since $\cos x \cos y = 0$ on $\tilde T$,   $ H(p_0) $ is a linear function of $z_0$.
  Let $ p_1\in \tilde T_1$ be the translate of $p_0 \in\tilde T_0$ under  $(x, y) \mapsto (x + \pi, y + \pi)$. Then
  $$
    \underbrace{H(p_0)}_{Z_0}  = \underbrace{H(p_1) + \pi a - \pi b}_{Z_1}.
  $$
  Since $H(p_0) = Z_0$ is already chosen as the coordinate of $p_0$, for consistency we must choose the right-hand side $Z_1$ to be the coordinate of $ p_1\in \tilde T_1$. Now if $\hat p_0\in \tilde T_0 $ and $\hat p_1 \in \tilde T_1 $ lie on the same trajectory then $H(\hat p_0)= H(\hat p_1)$ and hence their coordinates relate as
  $$
    \hat Z_1=\hat Z_0 + \pi a - \pi b,
  $$
  showing that the circle map is a rotation.
  Scaling by $2b \pi$ to return to $z$-coordinate gives an exact formula for this rotation:
  \begin{equation} \label{e:rotation}
   \tilde P_0: \mathbb R \mapsto \mathbb R,
   \quad
   z \mapsto z +    \frac{a-b}{2b} .
  \end{equation}
  To put this observation in a general context, note that Arnold showed~\cite{arnold1991topological} that under mild genericity conditions a general area-preserving flow on the 2-torus (i.e., a flow given by a multivalued Hamiltonian such as the one we consider) admits a transversal, and the first return map is undefined only at finitely many point and is a pure rotation on its domain. In our example a transversal exists a priory for all $a, b$ (except $a=b=0$), so no extra conditions are needed.

\medskip

\textbf{Perturbed flow: the first return map has two jumps.}
As $ \varepsilon $  increases from zero, each of the two homoclinic loops in Figure~\ref{fig:homoclinic} opens up as illustrated in Figure~\ref{f:homoclinic-open}, giving rise to a jump discontinuity, Figure~\ref{fig:poincare}. The jumps are the intervals on the transversal where trajectories escaping from former homoclinic loops merge into the flow ($I_1$ and $I_2$ in the figure on the right). These intervals are bounded by two branches of the unstable manifold of a saddle. The points of discontinuity of the return map (points $ b_1 $ and $ b_2$ in Figure~\ref{f:homoclinic-open}, right)  are  the intersections of the stable manifolds of the   saddles with the transversal.

\begin{figure} [H]
 \centering \includegraphics{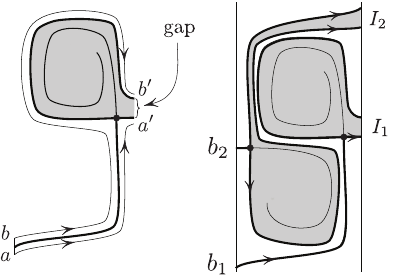}
 \caption{ When $\eps > 0$, the first return map $P$ has two jump discontinuities. For starting points $a$, $b$ separated by the stable manifold we have $ \lim_{|ab| \rightarrow 0 }|a ^\prime b ^\prime | > 0 $. }
 \label{f:homoclinic-open}
\end{figure}

\begin{figure} [H]
\centering
\begin{subfigure}{.5\textwidth}
 \centering
 \includegraphics[width=.5\linewidth]{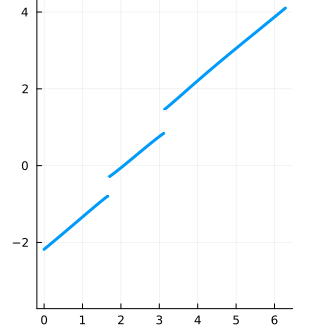}
 \caption{\poincare map $P$}
 \label{fig:poincare}
\end{subfigure}%
\begin{subfigure}{.5\textwidth}
 \centering
 \includegraphics[width=.5\linewidth]{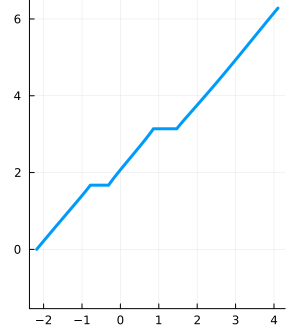}
 \caption{Inverse \poincare map $Q$}
 \label{fig:poincare-inv}
\end{subfigure}
\caption{Circle maps describing the particle dynamics}
\end{figure}
\subsection{Construction of the family of circle maps.} \label{s:Q}
In order to work with continuous maps, it will be convenient to consider the inverse \poincare map, Figure~\ref{fig:poincare-inv}.
The proposition below claims that fixing $b$ and $\eps$ and changing $a$ makes this inverse \poincare map a monotone family of circle maps with two flat spots. We actually need to take
$s = -a$ as the parameter to make the resulting family increasing rather than decreasing.

Assuming that the phase portrait in Figure~\ref{f:homoclinic-open} is valid, set the circle map $Q$ to be the inverse-time first return map of the flow~\eqref{e:perturbed} on $S^1 \setminus (I_1 \cup I_2)$, extending the map to the full circle by setting  $Q(I_1) = b_1$ and $Q(I_2) = b_2$.
The resulting map $Q$ is a continuous non-decreasing circle map.
Let $s=-a$ be the parameter and fix $b$ and $\eps$. This gives a family of circle maps $Q_s$.
\begin{proposition} \label{p:circle-maps-family}
  There exists $\gamma>0$ such that for any positive $\delta < \gamma$ there exists $\varepsilon_0 > 0$ such that
  \begin{enumerate}
    \item For any $a$ and $ b \in [\delta, \gamma]$ with $a \le 1.5b$ and any $\eps \in (0, \eps_0)$ the phase portrait of the flow~\eqref{e:perturbed} on the 2-torus $[-\frac \pi 2, \frac \pi 2] \times [-\frac \pi 2, \frac {3\pi} 2]$ factorized by~\eqref{e:shifts} is as in Figure~\ref{f:homoclinic-open}, more precisely:
    \begin{enumerate}
      \item This flow has two saddles, two sources, and no other fixed points.
      \item The forward trajectory of any point that is not on a stable mainfold of a saddle and is not one of the sources intersects $T$.
      \item The inverse-time first return map $Q$ is defined and continuous on the whole circle $T$ except for two closed intervals $I_1$, $I_2$. Each of these intervals is bounded by an intersection of the two unstable manifolds of a saddle with $T$. One-sided limits of this map at the endpoints of such interval $I_j$ exist and coincide with the point the points $b_j$ ($j=1, \ 2 $) where the stable manifold of the same saddle intersects $T$.
    \end{enumerate}

    \item  Fix $b \in [\delta, \gamma]$ and $\eps \in (0, \eps_0)$. Treat $s = -a$ as a parameter. Then the family
        $Q_s(z)$, $s \in [-\gamma, -\delta]$
      is a monotone family of circle maps with two flat spots (in the sense of Definition~\ref{d:monotone}).

      Moreover, the expansion constant $\lambda$ in the definition of a monotone family can be taken to be greater than $1 +c\eps$, where $c>0$ depends only on $\delta$.

  \end{enumerate}
\end{proposition}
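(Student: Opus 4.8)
I would prove the proposition by perturbing off the case $\eps = 0$, where the flow on $\mathbb T^2$ is completely understood (Section~\ref{s:poinc}, Appendix~\ref{a:saddles}): two saddles, each with a single homoclinic loop bounding a disc, every other orbit crossing the transversal $T$, and a first-return map equal to the rigid rotation~\eqref{e:rotation}. For $\eps>0$ two things happen, both controlled by $\Div({\bf v}+\eps{\bf f}) = \eps(2\det H'' + O(\eps))$ from Proposition~\ref{p:fenichel}: the two homoclinic loops break, which creates the two flat spots $I_1, I_2$ and the discontinuity data $b_1, b_2$; and the flow becomes dissipative along the orbits crossing $T$, which is the source of the uniform expansion of $Q$. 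A useful device throughout is the near-conservation of $H$: since $\dot H = \eps\,\nabla H\cdot{\bf f}$ along orbits, expressing $H$ in terms of the coordinate $z$ on $T$ (Section~\ref{s:poinc}) gives the exact identity
\[
 \tilde P_s(z) = z + \frac{a-b}{2b} + \frac{\eps}{2\pi b}\int_0^{\tau(z)}\big(\nabla H\cdot{\bf f}\big)\big(\varphi^t(z)\big)\,dt,
\]
for the forward return map, where $\tau(z)$ is the return time and $\varphi^t$ the flow; this presents $\tilde P_s$ as an $O(\eps)$-perturbation of a rotation and localizes all loss of smoothness to the set where $\tau\to\infty$.

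\textbf{Phase portrait (item 1).} First I would locate the zeros of ${\bf v}$ on $\mathbb T^2$: for $a,b\in[\delta,\gamma]$ with $\gamma$ small there are two saddles, with $x$-coordinates $\approx\pm(\tfrac{\pi}{2} - b)$ (strictly interior to the strip between the two copies of $T$, which is where $b\ge\delta$ is used), and two centers, all nondegenerate, with ${\bf v}$ bounded away from $0$ elsewhere; hence for $\eps<\eps_0(a,b)$ the field ${\bf v}+\eps{\bf f}$ has exactly one zero near each. The saddles persist by hyperbolicity; at a center $\det H''>0$, so the trace of the linearization of ${\bf v}+\eps{\bf f}$ there is $\eps(2\det H'' + O(\eps))>0$ and the center becomes a source, which is item~1(a). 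For items~1(b),(c) I would use that the saddles' invariant manifolds are $C^1$-close, on compact pieces, to the Hamiltonian ones, and show the two Hamiltonian homoclinic loops break in the escaping direction: if a loop of ${\bf v}+\eps{\bf f}$ survived, the disc $D$ it bounds would be flow-invariant with flow-tangent boundary, so by the divergence theorem $0 = \iint_D\Div({\bf v}+\eps{\bf f}) = \eps\iint_D(2\det H'' + O(\eps))$; but $\iint_{D_0}\det H''$ over the Hamiltonian disc $D_0$ vanishes at $a=b=0$ and is, to first order in $(a,b)$, a nonzero quantity of definite sign throughout the sector $\delta\le a,b\le\gamma$, $a\le 1.5b$. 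So for small $\eps$ the loops split with the unstable manifolds sweeping outward, yielding the picture of Figure~\ref{f:homoclinic-open}: one unstable branch of each saddle escapes into the flow, the other together with an arc of $T$ bounds $I_j$, and the stable manifold meets $T$ at $b_j$. Finally, to see that every orbit which is neither a source nor on a saddle's stable manifold crosses $T$, I would cut $\mathbb T^2$ along $T$ into an annulus which the flow enters through one boundary circle and apply the \PB theorem: an orbit that never exits would have to converge to a fixed point, a periodic orbit, or a polycycle, but the sources repel and, once the loops have broken, no periodic orbit or polycycle remains in the annulus, so the orbit must converge to a saddle. Continuity of $Q$ off $I_1\cup I_2$ and the one-sided limits at $\partial I_j$ then follow from the $C^1$-closeness of the invariant manifolds.

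\textbf{Monotone family (item 2): soft properties and the reduction.} Degree one is clear, and off the flat spots $Q_s$ is an orientation-preserving circle homeomorphism, constant on $I_1, I_2$, with matching one-sided limits at $\partial I_j$ by item~1(c), so $\tilde Q_s$ is continuous and non-decreasing in $z$; the endpoints of $I_j(s)$, being unstable-manifold crossings of $T$, depend smoothly on $a=-s$. For the plateau heights I would note that $\tilde b_j(s)$ is the lift of the point where saddle $j$'s stable manifold meets $T$, a smooth function of $a$ whose $z$-coordinate at $\eps=0$ is computed from $H(\text{saddle }j)$ (using $\partial_a H = -x$); a direct calculation gives $\tfrac{d}{ds}\tilde b_j$ bounded away from $0$ for both $j$, hence the same for $\eps<\eps_0$ by smoothness, establishing Property~\ref{prop:flat-spots}. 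Monotonicity of $\tilde Q_s$ in $s$ then follows: away from the flat spots the displayed formula gives $\partial_a\tilde P_s = \tfrac{1}{2b} + O(\eps)>0$ (so $\tilde P_s$, hence $\tilde P_s^{-1}$, is monotone in $a$), which combined with $\tfrac{d}{ds}\tilde b_j>0$ on the flat spots and joint continuity of $\tilde Q_s$ yields $\partial_s\tilde Q_s\ge 0$ everywhere for $\eps<\eps_0$. The remaining property, expansivity (Property~\ref{prop:expand}), is the crux. The two-dimensional formula for the derivative of a \poincare map gives, for $w$ in the domain of $P_s$,
\[
 DP_s(w) = \big(1 + O(\eps)\big)\exp\!\Big(\int_0^{\tau(w)}\Div({\bf v}+\eps{\bf f})\,dt\Big) = \big(1+O(\eps)\big)\exp\!\Big(2\eps\!\int_0^{\tau(w)}\det H''\,dt + O(\eps^2\tau)\Big),
\]
where the $1+O(\eps)$ is the ratio $\tfrac{(|X|\sin\theta)(w)}{(|X|\sin\theta)(P_s(w))}$ of (speed)$\times$(sine of angle with the transversal) at the two endpoints on $T$, equal to $\tfrac{b+O(\eps)}{b+O(\eps)}$ since $\dot x|_T = b+O(\eps)$ and the transversals are vertical, so this error is uniform in $w$. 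Since $DQ_s = 1/(DP_s\circ Q_s)$, the whole matter reduces to the uniform lower bound $-\int_0^{\tau(w)}\det H''(\varphi^t(w))\,dt \ge \kappa(\delta)>0$ for all $w$ outside the flat spots: this gives $DP_s(w)\le(1+C\eps)e^{-\kappa\eps/2}\le 1-c\eps$, i.e.\ $DQ_s\ge 1+c'\eps$, with $c'=c'(\delta)$ as claimed.

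\textbf{The main obstacle.} That last uniform estimate is the hard part. For $w$ near a flat spot it is easy: the orbit passes close to a saddle, where $\det H''$ is negative and bounded away from $0$ while $\tau\to\infty$, so the integral is very negative (and this also absorbs the $O(\eps^2\tau)$ term and the $O(\eps)$-error in $\Div{\bf f} = 2\det H'' + O(\eps)$ once $\eps$ is small). But for the remaining $w$ the difficulty is genuine, because $\det H'' = \tfrac12(\cos 2x + \cos 2y)$ has zero average over $\mathbb T^2$, so the negativity of its integral over a single return is not automatic. One has to use that these orbits stay $O(\max(a,b))$-close to the separatrix grid of the $a=b=0$ flow, along which $\det H''$ equals $-\sin^2(\cdot)\le 0$ on the grid lines and $-1$ at the grid vertices, and then give a sufficiently precise, uniform-in-$(a,b)$ description of a return — in particular of the time the orbit spends crawling through vertex neighbourhoods at speed $\sim b$ — to conclude that it spends time $\ge\kappa(\delta)$ in the region $\{\det H''\le -\tfrac14\}$. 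This is exactly where the hypotheses $\delta\le a,b\le\gamma$, $a\le 1.5b$, and the smallness of $\eps$ relative to $\delta$ enter. A secondary delicate point, needing the same hypotheses, is the nonvanishing with definite sign of $\iint_{D_0}\det H''$ used above to force the homoclinic loops to break.
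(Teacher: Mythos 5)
Your overall skeleton matches the paper's: locate and classify the fixed points by the implicit function theorem plus the sign of $\Div(\mathbf v+\eps\mathbf f)$, break the homoclinic loops by a divergence-theorem argument, use \PB on a fundamental annulus to show that every other orbit crosses $T$, verify the soft properties of $Q_s$ directly, and reduce expansivity to a negativity estimate on $\int\Div\mathbf f\,dt$ along a return orbit via the Poincar\'e-map derivative formula (the paper's Corollary~\ref{c:d-poincare}). Monotonicity in $s$ is handled differently but I take your formula $\partial_a\tilde P_s=\tfrac1{2b}+O(\eps)$ as a reasonable shortcut; the paper proves a sharper statement (Lemma~\ref{l:monotone}, via the tube/flux argument of Appendix~\ref{a:monotone}) because the $O(\eps)$ error degenerates near the saddles.

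Where your proposal misjudges the difficulty is the ``main obstacle'': you want a uniform lower bound $\kappa(\delta)$ on the time spent in $\{\det H''\le-\tfrac14\}$, chosen large enough to beat the $1+O(\eps)$ prefactor. The paper (Lemma~\ref{e:int-div-poinc}) needs no such tuned bound. Two soft observations finish it: (i) a one-line Gronwall estimate for $w=x+\tfrac\pi2$ gives $\dot w< w+\tfrac b2$, hence every return time satisfies $\tau(w)>|\ln b|$ --- the orbit has to squeeze through a bottleneck of width $\sim b$ near a saddle, costing logarithmic time; and (ii) the positive-divergence region not already trapped inside the former homoclinic discs consists of corners of size $O(\gamma_0)$ crossed at speed $O(1)$, so it costs time $O(\gamma_0)$. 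Taking $\gamma_0$ small, the integral is $\le -C_d|\ln b|$, which diverges as $b\to0$ and so trivially dominates the $O(\eps)$ prefactor; no sharp constant comparison is required. Your ``crawling through vertex neighbourhoods at speed $\sim b$'' is precisely observation (i), but you present it as a residual difficulty when it is actually the feature that makes the estimate soft.

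A secondary gap: for the loop-splitting you propose a first-order Taylor expansion of $\iint_{D_0}\det H''$ in $(a,b)$ and a sign check. The paper's Lemma~\ref{l:repel} proves, by a reflection/symmetry argument in each quadrant, that $\iint_{\Int\gamma}\Div\mathbf f>\delta A(\gamma)$ for \emph{every} closed Hamiltonian trajectory $\gamma$ (not just the boundary loop) and all $|a|,|b|<0.1$, with a bound proportional to enclosed area. This exactness is used again in Corollary~\ref{c:no-periodic-inside-loops} to rule out periodic orbits and polycycles of the perturbed flow anywhere inside or near the old loops, which your \PB step (item~1(b)) also needs; a first-order statement about the outermost loop alone would not suffice there.
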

\noindent This proposition is proved in Section~\ref{s:poinc-properties}.

\begin{remark} Since the value of the Hamiltonian of the unperturbed system changes by $O(\eps)$ while the trajectory starting on the transversal returns there (this is proved in the appendices, Remark~\ref{r:change-H-one-wind}), the \poincare map $P$ of the perturbed system is $O(\eps)$-close to the rigid rotation~\eqref{e:rotation} in $C^0$.
The argument with the change of the Hamiltonian works for the time-reversed flow as well, so the map $Q$ is also $O(\eps)$-close to a rotation in $C^0$, namely, to the inverse of the map~\eqref{e:rotation}.
\end{remark}

\medskip

\textbf{Rotation number and the drift slope.}
The next statement relates the rotation number of the first return map $P$ to the asymptotic slope of the trajectory: one is a linear function of the other.

\begin{remark} \label{r:rho-rho}
  For an initial point ${\bf x}_0 = (x_0, y_0) \in \tilde T$, let $z_0 = \frac{y_0-x_0}{2\pi} \in \mathbb R$;  suppose that $z_0$ has an infinite future $P$-orbit.
  Then
  \begin{enumerate}
    \item The rotation number
    \[
      \rho = \lim \limits_{n \to \infty} \frac{\tilde P^n(z_0) - z_0}{n}
    \]
    exists.
    \item   The forward trajectory ${\bf x}(t)$ of~\eqref{e:perturbed} starting at ${\bf x}_0$ is unbounded and at a finite distance from a ray with the slope $2 \rho + 1$.
  \end{enumerate}
\end{remark}
\noindent Apart from the particular formula connecting drift slope with rotation number, this is a well-known general fact on torus flows. A proof is given in  Appendix~\ref{s:rotation_number}.

\subsection{Proof of Theorem~\ref{t:main}.} \label{s:proof-main}
Now we are ready to prove Theorem~\ref{t:main} modulo Theorem~\ref{t:rotation-numbers}, propositions~\ref{p:fenichel} and~\ref{p:circle-maps-family}, and Remark~\ref{r:rho-rho}.
\begin{proof}[Proof of Theorem~\ref{t:main}]
  Let us first pick the constants in Theorem~\ref{t:main}. We take $\gamma$ as in Proposition~\ref{p:circle-maps-family} and take $\eps_0(\delta)$ to be the smaller of the two values:   $\eps_0(\delta)$ from Proposition~\ref{p:circle-maps-family} and   $\eps_0$ from Proposition~\ref{p:fenichel} with $M=\gamma$. For this choice of constants, we have conclusions of both propositions when
  $a, b \in [\delta, \gamma]$ and $\eps \in (0, \eps_0)$.

  By Proposition~\ref{p:fenichel}, the evolution of the particle's position ${\bf x}(t)$ governed by the 4-dimensional flow~\eqref{e:MR},~\eqref{e:fluid} is captured by the torus flow~\eqref{e:perturbed}, since  this invariant torus is globally attracting. The entire phase space is foliated by fibers with base points on the invariant torus; the flow is the product of contraction in the fiber and the flow on the torus $\mathcal M_\eps$.

  The phase portrait of the torus flow~\eqref{e:perturbed} is described by the first part of Proposition~\ref{p:circle-maps-family}. There are two sources, two saddles, and no other fixed points.
  Take a point $p \in {\cal M}_\eps \simeq \mathbb T^2$ that is not one of the sources and is not on the stable manifold (winding around the torus) of one of the saddles. The forward orbit of $p$ then intersects the transversal $T$, as claimed in Proposition~\ref{p:circle-maps-family}; repeating this argument shows that it intersects $T$ infinitely many times. According to Remark~\ref{r:rho-rho},  the lift of this forward trajectory to $\mathbb R^2$ is unbounded and at a finite distance of a ray with the slope $m = 2 \rho + 1$, where $\rho$ is the rotation number of this orbit under the first return map $\tilde P$. Recall that the circle map $Q$ defined in Section~\ref{s:Q} above denotes the inverse-time first return map continued to be constant on the two intervals where it is undefined, and
  $\tilde Q: \mathbb R \mapsto \mathbb R$ is its lift.
  Proposition~\ref{p:circle-maps-family} claims that it is a continuous non-decreasing degree one circle map (this is included in Definition~\ref{d:monotone}), so it has a rotation number $\rho(\tilde Q)$. Clearly, this implies that for any forward-unbounded trajectory $\rho(\tilde P) = - \rho(\tilde Q)$, and the particle drift slope is
  $m = 1 - 2 \rho(\tilde Q)$ for all initial data on the stable fiber of the points on $\mathcal M_\eps$ projecting to the point $p$ on the torus. So, the particle drift slope exists and equals $1 - 2 \rho(\tilde Q)$ for all initial data in $\mathbb R^4$ except for countably many two-dimensional manifolds (projecting to the sources on the torus) and countably many three-dimensional manifolds projecting to the stable manifolds of the saddles. These three-dimensional manifolds coincide with the stable manifolds of the saddles for the ODE in $\mathbb R^4$.

  We have checked that the particle drift slope $m$ is well defined for most initial data and does not depend on the initial point. Moreover, $m = 1 - 2 \rho(\tilde Q)$. Fixing $b$ and $\eps$ and parametrizing the map $Q$ by $s = -a$ gives rise to a circle map family $\tilde Q_s$. By Proposition~\ref{p:circle-maps-family}, $\tilde Q_s$ is a monotone family  (Definition~\ref{d:monotone}), and its rotation number as a function of $s$ satisfies the conclusions of Theorem~\ref{t:rotation-numbers}.
  So, the same properties also hold for the particle drift slope $m(\alpha)$ parametrized by $\alpha = a/b$. Indeed, $m(\alpha) = 1-2\rho(\tilde Q_s|_{s=-\alpha b})$. This gives the properties of the function $m(\alpha)$ claimed in Theorem~\ref{t:main}.
\end{proof}

\section{Circle maps with flat spots} \label{s:dim-H}
In this section we prove Theorem~\ref{t:rotation-numbers}.
Consider a monotone (in sense of Definition~\ref{d:monotone}) family $f_s$ of circle maps with $m > 0$ flat spots. Fix a lift $\tilde f_s: \mathbb R \mapsto \mathbb R$.
The following lemma summarizes some known results from  \cite{veerman1989irrational} and from references therein.
\begin{lemma} \label{l:rot-numbers}
\;

\begin{enumerate}
    \item Each $f_s$ has a well-defined and unique rotation number $\rho(s) = \rho(\tilde f_s)$.
    \item $\rho(s)$ depends continuously on the parameter $s$.
    \item $\rho(s)$ is non-decreasing in $s$ and strictly increasing when $\rho$ is irrational.
    \item When $\rho(s)$ is rational, it is constant on a non-degenerate closed interval   of $s$-values.
\end{enumerate}
\end{lemma}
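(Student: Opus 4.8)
The plan is to obtain items 1, 2 and the monotonicity half of item 3 from the classical \poincare rotation number theory, which applies verbatim to monotone (not necessarily injective) degree-one lifts, and to deduce item 4 together with the strict monotonicity in item 3 from the ``mode-locking / no irrational plateau'' mechanism for maps with flat spots, following \cite{veerman1989irrational}.

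For item 1 the \poincare construction goes through unchanged: a continuous non-decreasing lift $\tilde f$ with $\tilde f(x+1)=\tilde f(x)+1$ satisfies the uniform bound $\sup_x(\tilde f^n(x)-x)-\inf_x(\tilde f^n(x)-x)<1$ (this uses only monotonicity and commutation with translation by $1$, never injectivity), and the cocycle identity $\tilde f^{m+n}(x)-x=\big(\tilde f^m(\tilde f^n x)-\tilde f^n x\big)+\big(\tilde f^n x-x\big)$ holds; Fekete's subadditive lemma applied to $a_n:=\sup_x(\tilde f^n(x)-x)$ gives convergence of $a_n/n$, and the uniform bound makes the limit independent of $x$. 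For item 2 I would use the criterion that $\rho(\tilde f_s)>p/q$ iff $\tilde f_s^q(x)>x+p$ for all $x$ (the forward direction iterates a reverse inequality using monotonicity; the converse upgrades strictness to a uniform gap by periodicity and compactness); since $(s,x)\mapsto\tilde f_s^q(x)$ is jointly continuous and $x$ runs over a compact fundamental domain, $\{s:\rho(s)>p/q\}$ and $\{s:\rho(s)<p/q\}$ are open, so $\rho$ is continuous. Monotonicity in item 3 is immediate: $s\le s'$ gives $\tilde f_s\le\tilde f_{s'}$ pointwise, hence $\tilde f_s^n\le\tilde f_{s'}^n$, hence $\rho(s)\le\rho(s')$.

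For item 4, fix $p/q$ in lowest terms and set $g_s:=\tilde f_s^q-p$, again a continuous non-decreasing degree-one map, non-decreasing in $s$. Then $\rho(\tilde f_s)=p/q$ iff $g_s$ has a fixed point iff $\mu(s)\le 0\le M(s)$, where $\mu(s):=\min_x(g_s(x)-x)$ and $M(s):=\max_x(g_s(x)-x)$ are continuous and non-decreasing in $s$; hence $\rho^{-1}(p/q)=\{\mu\le 0\}\cap\{M\ge 0\}$ is a closed subinterval $[\alpha,\beta]$ of the parameter interval, which gives the ``closed interval'' claim (and, once the strict monotonicity at irrational values is known, shows $\rho^{-1}(r)$ is a point for $r\notin\mathbb Q$). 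For nondegeneracy, $\alpha<\beta$, pick a fixed point $y_0$ of $g_\alpha$. If the $\tilde f_\alpha$-orbit of $y_0$ misses all flat spots, then by Property~\ref{prop:expand} $g_\alpha'(y_0)>\lambda^q>1$, so $g_\alpha-\mathrm{id}$ changes sign at $y_0$, i.e.\ $\mu(\alpha)<0<M(\alpha)$, and a one-sided neighborhood of $\alpha$ lies in $\rho^{-1}(p/q)$. Otherwise the orbit meets a flat spot $I_k$, and Property~\ref{prop:flat-spots} (the height $\tilde b_k(s)$ moving with speed $\ge\nu$) forces the continuous map $s\mapsto\tilde f_s^{q-1}(\tilde b_k(s))-p$ to sweep across the nondegenerate arc $I_k(s)$ over a nondegenerate parameter interval, on which it is a fixed point of $g_s$; either way $\alpha<\beta$. (Equivalently, $\alpha=\beta$ in the interior of the parameter range would force $\mu(\alpha)=M(\alpha)=0$, i.e.\ $\tilde f_\alpha^q=\mathrm{id}+p$, contradicting that $\tilde f_\alpha$ is non-injective on a flat spot.)

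Dually, the strict monotonicity of $\rho$ at irrational values is the absence of irrational plateaus, the subtler half, which I would take from \cite{veerman1989irrational}. Heuristically: if $\rho\equiv r\notin\mathbb Q$ on a nondegenerate parameter interval, then for each such $s$ the forward orbit of every flat spot never revisits a flat spot (otherwise $\rho(s)$ would be rational), so by Property~\ref{prop:expand} the preimages $\{f_s^{-n}(I_k(s))\}_{n\ge 0}$ are pairwise disjoint intervals whose lengths decay geometrically and which, together with the minimal set of $f_s$, account for all of $S^1$ -- a rigid combinatorial picture that the heights $\tilde b_k(s)$ moving at speed $\ge\nu$ (Property~\ref{prop:flat-spots}) must destroy, changing $\rho$. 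The step I expect to be the main obstacle is exactly this quantitative interaction -- bounding from below how fast the image $\tilde f_s^{q-1}(\tilde b_k(s))$ of a flat spot moves against the merely continuous motion of the flat spot's own endpoints, both in the mode-locking argument of item 4 and here. In the homeomorphism case rational plateaus are only generic and irrational plateaus can occur, so the conclusion genuinely requires Properties~\ref{prop:expand}--\ref{prop:flat-spots}; this is the content of Veerman's preliminary lemmas (stated for one flat spot, with the several-flat-spot extension noted there), and with the expansivity Property~\ref{prop:expand} replacing his bounded-variation condition they carry over, so the proof of Lemma~\ref{l:rot-numbers} reduces to invoking them.
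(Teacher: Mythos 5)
Your proposal tracks the paper's own proof closely, and everything you make precise is correct. Items 1--2 and the monotone half of item 3 are handled by the classical \poincare machinery for monotone degree-one lifts, just as you describe (the paper simply cites Palis--de~Melo for this and adds ``with minor modifications''). For item 4 the paper uses the same mode-locking characterization you arrive at: $\rho(s)=p/q$ iff some flat spot $I_j$ satisfies $\tilde f^q_s(I_j)\in I_j+p$, proved by noting that if every crossing of $\tilde f^q_s-p$ with the diagonal occurred at an expanding point, the graph could only pass from below the diagonal to above it and never return. Your parenthetical version of the nondegeneracy step --- if $\rho^{-1}(p/q)$ were a single interior point $\alpha$, monotonicity and continuity of $\mu(s)=\min(g_s-\mathrm{id})$ and $M(s)=\max(g_s-\mathrm{id})$ would force $\mu(\alpha)=M(\alpha)=0$, hence $\tilde f_\alpha^q=\mathrm{id}+p$, contradicting non-injectivity on a flat spot --- is correct and arguably cleaner than the paper's, and I would promote it from a parenthetical to the main argument. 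The earlier ``sweeping'' version is, as you anticipate yourself, hard to make precise because the flat-spot endpoints are only continuous in $s$; the paper finesses this by moving $s$ in the direction that keeps the image point strictly away from the appropriate endpoint of $I_j+p$ by plain continuity, and then using monotonicity of $\rho$ in $s$ to supply the other inequality.

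The one place you take a genuinely different route is the strict increase of $\rho$ at irrational values. You propose to import it from Veerman; the paper instead obtains it as an immediate consequence of its own quantitative estimate, Corollary~\ref{c:dim-H-key} (any parameter interval contained in $C_N$ has length at most $KN\lambda^{-N}$), proved from Lemma~\ref{l:expand} using only the expansivity and plateau-speed axioms of Definition~\ref{d:monotone}. If $\rho$ were constant and irrational on a nondegenerate interval, that interval would sit inside every $C_N$ and the bound would force its length to zero. This keeps the argument self-contained, avoids Veerman's bounded-variation hypothesis (which the application to particle dynamics actually violates, cf.\ Remark~\ref{r:sing-derivative}), and reuses the very estimate that also yields $\dim_H C=0$ and the steepness at plateau endpoints; it therefore sidesteps exactly the quantitative interaction you flag as the main obstacle, and is worth preferring over the citation.
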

\noindent For the sake of completeness and to deal with the fact that Definition~\ref{d:monotone} is a bit different from the class of circle maps families considered in~\cite{veerman1989irrational}, we provide a proof in Appendix~\ref{s:rotation_number}.

Lemma~\ref{l:rot-numbers} implies the first three conclusions of Theorem~\ref{t:rotation-numbers}. Let us now prove Conclusion~4 claiming that $\dim_H(C) = 0$. Conclusion~5 (steepness) will be a side result of this.
The key idea is to use the expansivity in a way similar to Veerman's proof for the case of one flat spot~\cite{veerman1989irrational}. This is done below in the proof of Lemma~\ref{l:expand}.
But then instead of estimating the lengths of intervals in the complement to $C$, as done by Boyd~\cite{boyd1985structure} and later Veerman, we write $C = \cap_N C_N$ and estimate the lengths of the intervals in $C_N$. This gives a shorter proof, as there is no need to prove that $C$ has zero measure prior to proving that it has zero Hausdorff dimension.

Let $\tilde I_j$ be lifts of the flat spots $I_j$, and let $\tilde b_j(s) = \tilde f_s(I_j(s))$.
For a positive integer $N$ let $C_N$ be the set of all parameters $s$ such that the rotation number is irrational or rational with the denominator greater than $mN$: 
\[
  C_N = \Big\{ s \in [s_{min}, s_{max}]: \rho(s) \not \in \mathbb Q \text{ or } \rho(s) =  \frac p q \in \mathbb Q \text{ with } \gcd(p, q) = 1, \; q > m N  \Big\}.
\]
Clearly, $C = \bigcap\limits_{N=1}^\infty C_N$.

The following lemma and its corollary are used to   estimate the Hausdorff dimension and the steepness of $\rho$.

Set\footnote{This generalizes the function $\frac 1 N \tilde f_s^{N-1}(\tilde b_1(s))$ Veerman considered for one flat spot.}
\begin{equation} \label{e:psi-circle}
    \psi_N(s) = \sum_{j=1}^m \tilde f_s^{N-1}(\tilde b_j(s)).
\end{equation}
\begin{lemma} \label{l:expand}
 Pick any $s_0 \in C_N$. Then the `slope' of $\psi(s)$ near $s_0$ is $\gtrsim \lambda^N$. More formally, there exists $\delta > 0$ such that for any $\tau_1,  \tau_2 \in [s_0 - \delta, s_0 + \delta]$ with $\tau_2 \ge s_0 \ge \tau_1$ we have
 \begin{equation} \label{e:slope}
    \psi(\tau_2) - \psi(\tau_1) \ge \nu \lambda^{N-1}(\tau_2 - \tau_1),
 \end{equation}
 where the constants $\lambda$ and $\nu$ are from Definition~\ref{d:monotone}: $\lambda > 1$ is the expansion constant, and $\nu > 0$ is a lower bound on $\frac{d \tilde b_j}{ds}$.
\end{lemma}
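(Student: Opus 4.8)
The plan is to track how the point $\psi_N(s) = \sum_j \tilde f_s^{N-1}(\tilde b_j(s))$ moves as $s$ varies, and to argue that the only way expansion can fail to amplify the initial motion of the plateau heights $\tilde b_j$ is if some iterate $\tilde f_s^k(\tilde b_j(s))$ lands inside a flat spot before time $N-1$ — and that this cannot happen for $s_0 \in C_N$ within a small interval around $s_0$. First I would fix $s_0 \in C_N$ and consider the orbit segments $\tilde b_j(s_0), \tilde f_{s_0}(\tilde b_j(s_0)), \dots, \tilde f_{s_0}^{N-1}(\tilde b_j(s_0))$ for $j = 1, \dots, m$. The key combinatorial observation is that if any of these $mN$ points (for varying $j$ and iterate count $\le N-1$) were to lie in the interior of some flat spot $I_\ell(s_0)$, then the image of that flat spot would meet the forward orbit of a flat-spot endpoint, forcing a periodic orbit whose period is at most $mN$ — contradicting $s_0 \in C_N$ (where every rational rotation number has denominator $> mN$). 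A careful version of this: by the characterization recalled after Definition~\ref{d:monotone} (rational rotation number iff the image of a flat spot intersects itself under iteration), being in $C_N$ means no flat-spot image returns to a flat spot within $mN$ steps, which gives the required "no early landing in a flat spot" property for the finitely many orbit points we care about. Hence there is a definite distance $d_0 > 0$ between each of these $mN$ orbit points and the union of the (closed) flat spots at parameter $s_0$.

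Next, by continuity of $\tilde f_s$ in both variables and continuous dependence of the flat-spot endpoints on $s$ (parts of Definition~\ref{d:monotone}), I would choose $\delta > 0$ small enough that for all $s \in [s_0 - \delta, s_0 + \delta]$: (i) the corresponding orbit points $\tilde f_s^k(\tilde b_j(s))$, $0 \le k \le N-1$, stay within $d_0/2$ of their $s_0$-counterparts, hence stay outside the flat spots and in the region where $\frac{d}{dx}\tilde f_s > \lambda$; and (ii) all the relevant derivatives are defined. Then, for $\tau_1 \le s_0 \le \tau_2$ in this interval, I would estimate $\tilde f_{\tau_2}^{N-1}(\tilde b_j(\tau_2)) - \tilde f_{\tau_1}^{N-1}(\tilde b_j(\tau_1))$ from below. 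The cleanest route is to write this difference as a telescoping sum over the $N-1$ iterates plus the initial displacement $\tilde b_j(\tau_2) - \tilde b_j(\tau_1)$. Using monotonicity of $\tilde f_s$ in $x$ and in $s$, each application of $\tilde f$ does not decrease the accumulated difference; moreover, since all intermediate points lie in the $\lambda$-expanding region, each application multiplies the accumulated $x$-difference by at least $\lambda$. Formally one shows by induction on $k$ that
\[
  \tilde f_{\tau_2}^{k}(\tilde b_j(\tau_2)) - \tilde f_{\tau_1}^{k}(\tilde b_j(\tau_1)) \;\ge\; \lambda^{k}\,\big(\tilde b_j(\tau_2) - \tilde b_j(\tau_1)\big),
\]
using at the inductive step that $\tilde f_{\tau_2}^{k+1}(\tilde b_j(\tau_2)) - \tilde f_{\tau_1}^{k+1}(\tilde b_j(\tau_1)) \ge \tilde f_{\tau_2}^{k+1}(\tilde b_j(\tau_2)) - \tilde f_{\tau_2}(\tilde f_{\tau_1}^{k}(\tilde b_j(\tau_1))) \ge \lambda\big(\tilde f_{\tau_2}^{k}(\tilde b_j(\tau_2)) - \tilde f_{\tau_1}^{k}(\tilde b_j(\tau_1))\big)$, where the first inequality is monotonicity in $s$ and the second is the mean value theorem on the expanding region. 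Finally, $\tilde b_j(\tau_2) - \tilde b_j(\tau_1) \ge \nu(\tau_2 - \tau_1)$ by Property~\ref{prop:flat-spots}, so $\tilde f_{\tau_2}^{N-1}(\tilde b_j(\tau_2)) - \tilde f_{\tau_1}^{N-1}(\tilde b_j(\tau_1)) \ge \nu \lambda^{N-1}(\tau_2 - \tau_1)$; summing over $j = 1, \dots, m$ gives \eqref{e:slope} (and in fact an extra factor $m$, which can be dropped).

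I expect the main obstacle to be the first step: rigorously extracting, from "$s_0 \in C_N$", the statement that none of the finitely many orbit points $\tilde f_{s_0}^k(\tilde b_j(s_0))$ with $k \le N-1$ lies in a flat spot. One has to be careful about whether the condition is on the orbit of the flat-spot image $\tilde b_j$ versus the flat spot itself, about the role of the $m$ different flat spots (an orbit from $I_j$ could in principle hit $I_\ell$ with $\ell \ne j$, which is exactly why the denominator bound is $mN$ rather than $N$), and about endpoint-versus-interior subtleties of the closed flat spots. Once that combinatorial fact is pinned down, the rest is a routine continuity-plus-expansion argument. A secondary point to handle with care is the non-smoothness of $\tilde f_s$ at the flat-spot boundaries (cf. Remark~\ref{r:sing-derivative}): the induction only ever evaluates derivatives strictly inside the expanding region, so staying a fixed distance $d_0/2$ away from the flat spots is what makes the mean value theorem applicable.
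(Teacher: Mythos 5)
Your overall strategy matches the paper's: (i) use $s_0 \in C_N$ to get an orbit segment of some plateau height $\tilde b_j$ that stays outside the flat spots for $N-1$ steps; (ii) use continuity to get a $\delta$-window where this persists; (iii) run an expansion argument. But there is a genuine gap in step (i), and it's exactly at the point you flag as the ``main obstacle.''

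You claim that $s_0 \in C_N$ forces \emph{all} $mN$ orbit points $\tilde f_{s_0}^k(\tilde b_j(s_0))$, $j=1,\dots,m$, $k \le N-1$, to avoid the flat spots, on the grounds that a single landing $f^k(b_j) \in I_\ell$ ``forces a periodic orbit.'' It does not. A single landing only means the orbit of $b_j$ continues, after one more step, as the orbit of $b_\ell$; that is not yet periodic, and the characterization you cite (``rational rotation number iff $f^q(I_j)$ meets $I_j+p$ for \emph{the same} $j$'') only rules out an orbit of $b_j$ re-entering $I_j$ itself — it says nothing about hitting $I_\ell$ for $\ell \ne j$. So the ``all $j$'' claim is false in general, and your final step (``summing over $j$ gives an extra factor $m$'') relies on it. The paper's argument is more careful here and is the missing idea: suppose for contradiction that \emph{every} $j$ has a landing $f^{n_j}_{s_0}(b_j) \in I_{k_j}$ with $n_j \le N-1$. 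Then $f^{n_j+1}_{s_0}(b_j) = b_{k_j}$, so $j \mapsto k_j$ defines a map on $\{1,\dots,m\}$; by pigeonhole it has a cycle of length $q \le m$, which produces a periodic orbit of period $\le q N \le mN$, contradicting $s_0 \in C_N$. This yields existence of \emph{at least one} good $j$. The other summands of $\psi_N$ are then handled by monotonicity alone (they are non-decreasing in $s$, hence contribute $\ge 0$), so the bound comes from the single good $j$ and there is no factor $m$.

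One secondary remark: your inductive MVT step compares $\tilde f_{\tau_2}^k(\tilde b_j(\tau_2))$ with $\tilde f_{\tau_1}^k(\tilde b_j(\tau_1))$ and applies $\tilde f_{\tau_2}$ to both, so you need the connecting interval to stay outside the flat spots of $f_{\tau_2}$, which themselves move with the parameter. This is workable with careful epsilonics, but the paper avoids it by first using monotonicity in $s$ to replace $\tilde f_{\tau_1}, \tilde f_{\tau_2}$ by the single map $\tilde f_{s_0}$ (since $\tau_1 \le s_0 \le \tau_2$), and then using the intermediate value theorem in $s$ to show that the arc $[\tilde f_{s_0}^n(\tilde b_j(\tau_1)), \tilde f_{s_0}^n(\tilde b_j(\tau_2))]$ avoids $\mathcal I(s_0)$. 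That reduction is cleaner and worth adopting.
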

\begin{proof}
Set $\mathcal I(s) = \cup_{j=1}^m I_j(s)$.
First, let us show that for some $j$ we have
\begin{equation} \label{e:disjoint-0}
    f_{s_0}^n(b_j(s_0)) \not\in \mathcal I(s_0), \qquad
n=0, 1, \dots, N-1.
\end{equation}
Suppose the opposite holds, so that for any $j$ there exist $k_j \le m$ and $n_j \le N-1$ such that $f_{s_0}^{n_j}(b_j(s_0)) \in I_{k_j}(s_0)$. Let us write $j \mapsto k_j$ if this is the case. As there are only $m$ flat spots, by the pigeonhole principle we can find a cycle 
\[
    j_1 \mapsto j_2 \mapsto \dots \mapsto j_q \mapsto j_1, \qquad q \le m.
\]
As $j_s \mapsto j_{s+1}$ implies
$f_{s_0}^{n_{j_s} + 1} (b_{j_s}) = b_{j_{s+1}}$, this shows that $f_{s_0}$ has a periodic orbit with the period
\[
(n_{j_1} + 1) + \dots + (n_{j_{q}}+1) \le qN \le mN.
\]
Then, $\rho(s_0)$ is rational with the denominator $\le mN$, which contradicts $s_0 \in C_N$. This contradiction shows that~\eqref{e:disjoint-0} holds for some $j$.

Now, fix $j$ such that we have~\eqref{e:disjoint-0}. By the continuity of $I_j(s)$, for some $\delta > 0$ for any $s \in [s_0 - \delta, s_0 + \delta]$ we also have~\eqref{e:disjoint-0} with $b_j(s)$ instead of $b_j(s_0)$ (while we still iterate $f_{s_0}$):
\begin{equation} \label{e:disjoint}
    f_{s_0}^n(b_j(s)) \not\in \mathcal I(s_0), \qquad
n=0, 1, \dots, N-1.
\end{equation}
Let us now fix $\tau_1,  \tau_2 \in [s_0 - \delta, s_0 + \delta]$ with $\tau_1 \le s_0 \le \tau_2$ and prove~\eqref{e:slope}.
As all the summands in~\eqref{e:psi-circle} are monotone with respect to $s$, it is enough to prove that for $j$ picked above
\begin{equation}
    \tilde f_{\tau_2}^{N-1}(\tilde b_j(\tau_2))
    -
    \tilde f_{\tau_1}^{N-1}(\tilde b_j(\tau_1))
    \ge
    \nu \lambda^{N-1}(\tau_2 - \tau_1).
\end{equation}
This is done using expansivity as in Veerman's lemmas~5.1 and~5.2. We have $\tilde b_j(\tau_2) - \tilde b_j(\tau_1) \ge \nu(\tau_2 - \tau_1)$ due to property~\ref{prop:flat-spots} of monotone families. By monotonicity, we have
\[
    \tilde f^{N-1}_{\tau_2}(\tilde b_j(\tau_2))
    -
    \tilde f^{N-1}_{\tau_1}(\tilde b_j(\tau_1))
    \ge
    \tilde f^{N-1}_{s_0}(\tilde b_j(\tau_2))
    -
    \tilde f^{N-1}_{s_0}(\tilde b_j(\tau_1)).
\]
For any $n \le N-1$ the arc $[f^n_{s_0}(\tilde b_j(\tau_1)), f^n_{s_0}(\tilde b_j(\tau_2))]$ does not intersect $\mathcal I(s_0)$. Indeed, by the intermediate value theorem any $x$ in this arc can be written as
$x = f^n_{s_0}(\tilde b_j(s)) \not \in \mathcal I(s_0)$,
with $s \in [\tau_1, \tau_2]$.
Thus, due to the expansivity outside of $\mathcal I$ we get
\[
    \tilde f^{N-1}_{s_0}(\tilde b_j(\tau_2))
    -
    \tilde f^{N-1}_{s_0}(\tilde b_j(\tau_1))
    \ge
    \lambda^{N-1}(\tilde b_j(\tau_2) - \tilde b_j(\tau_1))
    \ge
    \nu \lambda^{N-1}(\tau_2 - \tau_1).
\]
\end{proof}

\begin{corollary} \label{c:dim-H-key}
  Any interval $[s_1, s_2] \subset C_N$ satisfies
  \begin{equation}
    s_2 - s_1 \le K N \lambda^{-N}
  \end{equation}
  for some constant $K$ independent of $N$.
\end{corollary}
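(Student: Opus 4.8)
The plan is to pin down the increment $\psi_N(s_2)-\psi_N(s_1)$ of the auxiliary function $\psi_N(s)=\sum_{j=1}^m\tilde f_s^{N-1}(\tilde b_j(s))$ from \eqref{e:psi-circle} between two bounds: a lower bound coming from Lemma~\ref{l:expand}, and an upper bound coming only from the general structure of degree-one circle maps together with compactness of the parameter interval. Comparing the two then gives the stated bound on $s_2-s_1$.

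For the lower bound I would globalize the local conclusion of Lemma~\ref{l:expand}. Since $[s_1,s_2]\subseteq C_N$, the lemma applies at \emph{every} point of this interval, so the standard open--closed trick works: put
\[
 A=\Bigl\{\,t\in[s_1,s_2]\ :\ \psi_N(t)-\psi_N(s_1)\ge \nu\lambda^{N-1}(t-s_1)\,\Bigr\}.
\]
By continuity of $\psi_N$ this set is closed, and it contains $s_1$. If $t^\ast:=\sup A$ were strictly less than $s_2$, then applying Lemma~\ref{l:expand} at $s_0=t^\ast\in C_N$ with $\tau_1=t^\ast$ and $\tau_2\in(t^\ast,\min(t^\ast+\delta,s_2)]$ gives $\psi_N(\tau_2)-\psi_N(t^\ast)\ge \nu\lambda^{N-1}(\tau_2-t^\ast)$; adding this to $\psi_N(t^\ast)-\psi_N(s_1)\ge\nu\lambda^{N-1}(t^\ast-s_1)$ shows $\tau_2\in A$ with $\tau_2>t^\ast$, a contradiction. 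Hence $s_2\in A$, i.e. $\psi_N(s_2)-\psi_N(s_1)\ge \nu\lambda^{N-1}(s_2-s_1)$.

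For the upper bound I would invoke the classical fact that a continuous non-decreasing degree-one lift satisfies $|\tilde f_s^{n}(x)-x-n\rho(s)|\le 1$ for all $x$, $n$, $s$. Applying it with $x=\tilde b_j(s)$ and $n=N-1$ yields $\psi_N(s)=\sum_{j=1}^m \tilde b_j(s)+m(N-1)\rho(s)+\theta(s)$ with $|\theta(s)|\le m$. The functions $\tilde b_j$ and $\rho$ are continuous on the compact interval $[s_{min},s_{max}]$, hence bounded there, so the three pieces of the increment $\psi_N(s_2)-\psi_N(s_1)$ are bounded respectively by a constant, by $m(N-1)\bigl(\rho(s_{max})-\rho(s_{min})\bigr)$, and by $2m$; altogether $\psi_N(s_2)-\psi_N(s_1)\le C_0N$ for some $C_0$ independent of $N$. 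Combining with the lower bound gives $\nu\lambda^{N-1}(s_2-s_1)\le C_0N$, i.e.
\[
 s_2-s_1\le \frac{C_0\lambda}{\nu}\,N\lambda^{-N},
\]
which is the claim with $K=C_0\lambda/\nu$.

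The only step needing genuine care is the globalization of the local estimate of Lemma~\ref{l:expand}; everything else is routine. I would add a remark that the factor $N$ is actually superfluous: membership of $[s_1,s_2]$ in $C_N$ forbids $\rho$ from taking any value $p/q$ with $q\le mN$, and every interval of length $\ge 1/(mN)$ meets $\tfrac1{mN}\mathbb Z\subseteq\{p/q:q\le mN\}$, so $\rho(s_2)-\rho(s_1)<1/(mN)$; then $m(N-1)\bigl(\rho(s_2)-\rho(s_1)\bigr)<1$ and $\psi_N(s_2)-\psi_N(s_1)$ is bounded by a constant, improving the estimate to $s_2-s_1\le K\lambda^{-N}$. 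For the Hausdorff-dimension argument either form is enough, so I would state it in the weaker form written above.
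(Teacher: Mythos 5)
Your proof is correct, and for the lower bound it takes a genuinely cleaner route than the paper. The paper globalizes Lemma~\ref{l:expand} by extracting a finite subcover of $[s_1,s_2]$ by the $\delta$-neighborhoods, pruning nested intervals, and chaining the estimate through a sequence of overlap points; your open--closed (sup) argument reaches the same inequality $\psi_N(s_2)-\psi_N(s_1)\ge\nu\lambda^{N-1}(s_2-s_1)$ with less bookkeeping. For the upper bound the paper uses only the crude consequence of degree one that $|\tilde f_s(x)-x|$ is uniformly bounded, hence $|\tilde f_s^{N-1}(\tilde b_j)|=O(N)$; you instead invoke the sharper estimate $|\tilde f_s^{n}(x)-x-n\rho(s)|\le 1$, which gives the same $O(N)$ bound and, as you note, can be sharpened further: since $[s_1,s_2]\subset C_N$ forces $\rho(s_2)-\rho(s_1)<1/(mN)$, the term $m(N-1)\bigl(\rho(s_2)-\rho(s_1)\bigr)$ is already bounded, so $\psi_N(s_2)-\psi_N(s_1)=O(1)$ and the estimate improves to $s_2-s_1\le K\lambda^{-N}$. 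That improvement is genuine and would also simplify the subsequent computation of $m_d(\mathcal U_N)$ in Lemma~\ref{l:cover}, though as you observe the weaker form is already enough for zero Hausdorff dimension and for the steepness estimate.
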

\begin{proof}
  We will construct a sequence $s_1 = y_0 < y_1 < \dots < y_L = s_2$ so that we can estimate $\psi(y_{i+1}) - \psi(y_i)$ using Lemma~\ref{l:expand}.
  To this end, cover each $s \in [s_1, s_2]$ by its $\delta$-neighborhood provided by Lemma~\ref{l:expand}. Select a finite subcover $\{ \tilde U_i \}$ and remove all intervals contained inside another interval. Let $\{ U_i\} $, $i=1, \dots, L$ denote the resulting subcover reordered so that the centers form an increasing sequence. Then, $U_i$ intersects $U_{i+1}$ for any $i$ (otherwise, the interval $U_j$ covering the empty space between them will cover $U_i$ if $j<i$ or $U_{i+1}$ if $j>i+1$).
  Set $y_0 = s_1$.
  When $i=1, \dots, L-1$, set $y_i$ to be any point in $U_i \cap U_{i+1}$ lying between the centers of $U_i$ and $U_{i+1}$. Finally, set $y_L = s_2$.
  By construction, $[y_i, y_{i+1}]$ is covered by $U_{i+1}$ for any $i=0, \dots, L-1$ and contains the center of $U_{i+1}$. Thus, Lemma~\ref{l:expand} implies $\psi(y_{i+1}) - \psi(y_i) \ge \nu \lambda^{N-1}(y_{i+1} - y_i)$. Adding these inequalities together yields
  \begin{equation*}
   \psi(s_2) - \psi(s_1) \ge \nu \lambda^{N-1} (s_2 - s_1).
 \end{equation*}
  As $f$ has degree one, there is $M > 0$ such that for any $N \ge 1$ and any $j$ we have $|\tilde f_s^N(\tilde b_j)| < MN$ for all $s$. This implies $|\psi(s)| \le MmN$ and $\psi(s_2) - \psi(s_1) \le 2MmN$. Combining with the previous inequality yields the estimate on $s_2 - s_1$.
\end{proof}

Now, we are ready to prove steepness. For right endpoints, steepness follows from the corollary below. For left endpoints, it can be proved in the same way.
\begin{corollary} \label{c:steep}
  Let $s_0$ be a right endpoint of some rational interval $\{ s: \rho(s) = \frac p q\}$. Then, for all small enough $s > 0$ we have
  \begin{equation}
    \rho(s_0 + s) - \rho(s_0) >\frac{D}{ q|\ln s|}, \ \hbox{where} \ \  D= \frac{\ln \lambda}{4 m}.
  \end{equation}
\end{corollary}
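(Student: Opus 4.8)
The plan is to apply Corollary~\ref{c:dim-H-key} with a value of $N$ tuned to the size of $\Delta\rho:=\rho(s_0+s)-\frac pq$, and then invert the resulting bound $s\le KN\lambda^{-N}$. First observe that, since $s_0$ is the \emph{right} endpoint of the plateau $\rho^{-1}(p/q)$ and $\rho$ is non-decreasing, we have $\rho(s_0+s)>p/q$ for every $s>0$, so $\Delta\rho>0$; and by continuity of $\rho$ (Lemma~\ref{l:rot-numbers}) together with $\rho(s_0)=p/q$, we have $\Delta\rho\to 0$ as $s\to 0^+$. So it suffices to treat $s$ so small that $\Delta\rho\le\frac1{4mq}$, and for such $s$ we set
\[
  N=\Big\lfloor\frac1{2mq\,\Delta\rho}\Big\rfloor\ \ (\ge 2).
\]

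The first step is to check that $(s_0,s_0+s]\subset C_N$. For $t$ in this interval, monotonicity of $\rho$ gives $\frac pq<\rho(t)\le\frac pq+\Delta\rho$. If $\rho(t)\notin\mathbb Q$ then $t\in C_N$ by definition of $C_N$. If $\rho(t)=\frac{p'}{q'}$ in lowest terms, then $\frac{p'}{q'}\ne\frac pq$, so the elementary spacing estimate $\big|\frac{p'}{q'}-\frac pq\big|=\frac{|p'q-pq'|}{qq'}\ge\frac1{qq'}$ combined with $\big|\frac{p'}{q'}-\frac pq\big|\le\Delta\rho$ forces $q'\ge\frac1{q\,\Delta\rho}\ge 2mN>mN$, so again $t\in C_N$. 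Consequently, for every $s_1\in(s_0,s_0+s)$ the closed interval $[s_1,s_0+s]$ lies in $C_N$, and Corollary~\ref{c:dim-H-key} gives $(s_0+s)-s_1\le KN\lambda^{-N}$; letting $s_1\downarrow s_0$ yields $s\le KN\lambda^{-N}$.

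It remains to invert this. Taking logarithms, $N\ln\lambda\le|\ln s|+\ln(KN)$. Since $\Delta\rho\to 0$, hence $N\to\infty$, as $s\to 0^+$, the term $\ln(KN)$ is $o(N)$, so for any fixed $\eta\in(0,\tfrac13)$ and all small enough $s>0$ we get $N\le\frac{|\ln s|}{(1-\eta)\ln\lambda}$. Combining with $N\ge\frac1{2mq\,\Delta\rho}-1$ and using $|\ln s|\to\infty$ to absorb the $-1$, we obtain $\frac1{2mq\,\Delta\rho}\le\frac{(1+\eta)}{(1-\eta)\ln\lambda}\,|\ln s|$ for $s$ small, i.e.
\[
  \Delta\rho\ \ge\ \frac{(1-\eta)\ln\lambda}{2(1+\eta)\,m\,q\,|\ln s|}\ >\ \frac{\ln\lambda}{4m}\cdot\frac1{q\,|\ln s|},
\]
the last inequality holding because $\eta<\tfrac13$; this is the claim with $D=\frac{\ln\lambda}{4m}$.

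There is no deep obstacle, as the real work is concentrated in Lemma~\ref{l:expand} and Corollary~\ref{c:dim-H-key}. The points requiring care are (i) the number-theoretic fact that rationals near $p/q$ with denominator $\le Q$ are at distance $\ge\frac1{qQ}$, which is exactly what converts ``$\Delta\rho$ small'' into ``the interval just to the right of $s_0$ lies in $C_N$ for $N\asymp\frac1{mq\,\Delta\rho}$'', and (ii) bookkeeping the constants so that the final answer comes out as $D=\frac{\ln\lambda}{4m}$ rather than merely $D=\Theta(\frac{\ln\lambda}{m})$ — the slack between $\tfrac14$ and the ``true'' asymptotic $\tfrac12$ absorbs the rounding in the definition of $N$ and the lower-order term $\ln(KN)$. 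One slightly delicate point is that $s_0$ itself generally does \emph{not} lie in $C_N$ (its rotation number $p/q$ has denominator $q<mN$ once $N$ is large), which is why we work with $[s_1,s_0+s]$ and pass to the limit $s_1\downarrow s_0$ rather than with $[s_0,s_0+s]$ directly. The statement for a left endpoint of a plateau follows by the same argument applied to the reflected family (parameter $s\mapsto -s$).
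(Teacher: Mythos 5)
Your proof is correct, and it rests on exactly the same two ingredients as the paper's: Corollary~\ref{c:dim-H-key} and the elementary spacing bound $\lvert p'/q' - p/q\rvert \ge 1/(qq')$. What differs is the direction of the bookkeeping. The paper fixes $N$ as a function of $s$ (namely $N = 2\lfloor -\ln s/\ln\lambda\rfloor + 2$) so that any $C_N$-interval is shorter than $s$, concludes that $(s_0,s_0+s)\not\subset C_N$, picks a witness $s'\in(s_0,s_0+s)$ with rational $\rho(s')$ of denominator $Q\le mN$, and then applies the spacing bound to $\rho(s')-\rho(s_0)$. You instead fix $N$ as a function of $\Delta\rho$, use the spacing bound (in the contrapositive) to show that the whole of $(s_0,s_0+s]$ \emph{is} contained in $C_N$, apply Corollary~\ref{c:dim-H-key} directly to get $s\le KN\lambda^{-N}$, and then invert. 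Your version is the dual route through the same pair of estimates; it is perhaps a touch cleaner because it bypasses the choice of the witness $s'$, at the cost of some extra care with the $\lfloor\cdot\rfloor$ and $\ln(KN)$ lower-order terms at the end (which you handle correctly via the $\eta$-slack). Your handling of the closed-interval hypothesis in Corollary~\ref{c:dim-H-key} (working on $[s_1,s_0+s]$ and sending $s_1\downarrow s_0$, since $s_0\notin C_N$) is a point the paper glosses over and that you are right to flag. One small inaccuracy in your closing remark: with $N\approx 1/(mq\,\Delta\rho)$ rather than $1/(2mq\,\Delta\rho)$, the asymptotically achievable constant would be $\ln\lambda/m$, not $\ln\lambda/(2m)$; this does not affect the proof, since $\ln\lambda/(4m)$ is all that is claimed.
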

\begin{proof}
  By Corollary~\ref{c:dim-H-key} for large enough $N$ the length of each interval contained in $C_N$ is less than $\lambda^{-N/2}$. Assuming $s$ is small enough, take $N = 2 \big[ \frac{-\ln s}{\ln \lambda} \big] + 2$. Then $N \ge -2 \log_{\lambda} s$, and $\lambda^{-N/2} \le s$.
  Then $(s_0, s_0+s)$ cannot be a subset of $C_N$, and so there is $s' \in (s_0, s_0 + s)$ with $\rho(s')$ being rational with the denominator $Q \le mN$. Hence, $\rho(s') - \rho(s_0) \ge \frac 1 {qQ} \ge \frac 1 {qm N} $.

  Provided that $s$ is small enough, we have
  $N \le 4 \frac{|\ln s|}{\ln \lambda}$
  and thus
  $\rho(s') - \rho(s_0) \ge \frac {\ln \lambda} {4qm|\ln s|}$, as claimed.
\end{proof}

Before we estimate the Hausdorff dimension of $C$, let us recall the definition. For $d > 0$ and a finite or countable union $\mathcal U$ of closed intervals $U_i$ denote $m_d = \sum_i |U_i|^d$ and
$\diam \; \mathcal U = \sup_i |U_i|$.
The $d$-dimensional \emph{Hausdorff measure} of a set $X$ is defined as
\[
    \mathcal H^d(X) = \sup_{\delta > 0} \; \inf \{ m_d(\mathcal U):
    \diam \; \mathcal U < \delta \text{ and } \mathcal U \text{ covers } X \},
\]
and the \emph{Hausdorff dimension} of $X$ is
\[
    \dim_H(X) = \inf \{ d \ge 0: \mathcal H^d(X) = 0\}.
\]
Note that $C_N$ is a finite union of intervals, and let $\mathcal U_N$ be the union of the closures of these intervals.
\begin{lemma} \label{l:cover}
 \[
    m_d(\mathcal U_N) = O(N^{2+d}) (\lambda^d)^{-N},
    \qquad
    \diam \; \mathcal U_N = O(N \lambda^{-N}),
 \]
 where the constants in the $O$-estimates do not depend on $N$.
\end{lemma}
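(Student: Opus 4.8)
The plan is to obtain both estimates from Corollary~\ref{c:dim-H-key} together with an elementary count of how many intervals make up $C_N$. The diameter bound is immediate: by Corollary~\ref{c:dim-H-key} every maximal interval contained in $C_N$ has length at most $KN\lambda^{-N}$, and passing to closures does not change lengths, so $\diam \mathcal U_N \le KN\lambda^{-N} = O(N\lambda^{-N})$.

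For the $m_d$ estimate I would first pin down the structure of $C_N$. Its complement in $[s_{min}, s_{max}]$ is exactly the union of the plateaus $\rho^{-1}(p/q)$ with $\gcd(p,q)=1$ and $q \le mN$; by Lemma~\ref{l:rot-numbers} each such plateau is a non-degenerate closed interval, and distinct rationals give disjoint plateaus. Hence $C_N$ is a finite union of intervals (as already noted before the lemma), and the number of its components is at most one more than the number of removed plateaus.

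Next I would bound that number of plateaus. Since $\rho$ is continuous and non-decreasing, its range is an interval $[\rho(s_{min}),\rho(s_{max})]$ of some fixed finite length $L_0$ independent of $N$. For each $q$ with $1 \le q \le mN$ there are at most $L_0 q + 1$ integers $p$ with $p/q$ in this range, so the number of rationals $p/q$ in lowest terms with $q \le mN$ lying in the range of $\rho$ is at most $\sum_{q=1}^{mN}(L_0 q + 1) = O((mN)^2) = O(N^2)$. Therefore $\mathcal U_N$ consists of $O(N^2)$ closed intervals, each of length $O(N\lambda^{-N})$, and
\[
  m_d(\mathcal U_N) = \sum_i |U_i|^d \le \big(\#\text{intervals of }\mathcal U_N\big)\cdot\big(\diam \mathcal U_N\big)^d = O(N^2)\cdot O\big((N\lambda^{-N})^d\big) = O(N^{2+d})(\lambda^d)^{-N},
\]
as claimed.

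I expect the only delicate point to be the bookkeeping that converts "plateaus with denominator at most $mN$" into the $O(N^2)$ component count — in particular making sure that the number of \emph{components} of $C_N$, rather than just the number of plateaus, is what is controlled, and that each component's closure falls under the length bound of Corollary~\ref{c:dim-H-key}. All of the genuine dynamical content (expansivity) has already been absorbed into Lemma~\ref{l:expand} and Corollary~\ref{c:dim-H-key}, so no further dynamical argument is needed here; the remaining work is purely combinatorial/number-theoretic and routine.
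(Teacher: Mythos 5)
Your proof is correct and follows essentially the same route as the paper's: the diameter bound is read off Corollary~\ref{c:dim-H-key}, and the count of components of $\mathcal U_N$ is reduced to counting the reduced fractions $p/q$ with $q\le mN$ lying in the (fixed) range of $\rho$, giving $O(N^2)$ intervals and hence $m_d(\mathcal U_N)=O(N^2)\cdot O\bigl((N\lambda^{-N})^d\bigr)$. You merely spell out the steps the paper compresses into one line (that distinct plateaus are disjoint, that removing $k$ of them leaves at most $k+1$ components, and the $\sum_{q\le mN}(L_0 q+1)=O(N^2)$ count), so there is nothing new here, but nothing wrong either.
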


\begin{proof}
  The estimate on $\diam \; \mathcal U_N$ follows from Corollary~\ref{c:dim-H-key}. The number of intervals in $\mathcal U_N$ is $O(N^2)$, as there are $O(N^2)$ rational numbers with the denominator $ \le mN$ in any fixed interval. Together with the estimate on $\diam \; \mathcal U_N$ this gives the estimate on $m_d(\mathcal U_N)$.
\end{proof}
This lemma implies that for any $d > 0$, when $N \to\infty$, we have
$m_d(C_N) \to 0$ and $\diam \; C_N \to 0$.
Thus, $\mathcal H^d(C) = 0$, and $\dim_H(C) = 0$. This completes the proof of Theorem~\ref{t:rotation-numbers}.

\section{Proof of monotonicity of the circle map family} \label{s:poinc-properties}
The first part of Proposition~\ref{p:circle-maps-family} is proved in Appendix~\ref{s:phase-portrait}.
Here we prove the second part of the proposition using the first part as well as several technical lemmas proved below in appendices.
We need to verify that for fixed $b$ and $\eps$ the family $Q_s(z)$ defined in Section~\ref{s:Q} satisfies properties 1-5 in Definition~\ref{d:monotone}, treating $s=-a$ as the parameter.
\medskip

\noindent \textbf{1a. Continuity:} \emph{$Q_s(z)$ is continuous with respect to both $z$ and $s$.} The continuity in $z$ is stated in conclusion~(c) of the first part of Proposition~\ref{p:circle-maps-family}. The continuity with respect to $s$ when $z$ is outside the flat spots follows from continuous dependence of solutions of the ODE~\eqref{e:perturbed} with reversed time with respect to the parameter $s$.
When $z$ is inside a flat spot, continuity in $s$ can be deduced from continuity in $s$ outside the flat spots and monotonicity and continuity in $z$.
Indeed, take any value $s_0$ of the parameter and any point $z \in \mathbb R$ lying in a flat spot (more precisely, in its lift to $\mathbb R$). Let $J \subset \mathbb R$ be a connected component of the lift of a flat spot that covers $z$. Set $z' = \tilde Q_{s_0}(z)$. Take two points $a, b \in \mathbb R$ outside flat spots so that for $s=s_0$ the interval $[a, b]$ covers $J$ but $a$ and $b$ are so close the two endpoints of $J$ that $|\tilde Q_{s_0}(a) - z'| < \eps/2$ and $|\tilde Q_{s_0}(b) - z'| < \eps/2$.
By the continuity in $s$ outside flat spots, for any $s_1$ close enough to $s_0$ we have
$|\tilde Q_{s_1}(a) - z'| < \eps$ and $|\tilde Q_{s_1}(b) - z'| < \eps$.
As $\tilde Q_{s_1}(z)$ is between $\tilde Q_{s_1}(a)$ and $\tilde Q_{s_1}(b)$, this implies the continuity of $Q_s(z)$ at $s_0$.

\medskip

\noindent \textbf{1b. Monotonicity $\tilde Q_s(z)$ in $ z $ } follows from the fact that trajectories of~\eqref{e:perturbed} do not intersect each other.

\medskip
\noindent Monotonicity with respect to the parameter $s$ is much less obvious, and it is helpful to establish the other properties first so that they can be used to prove monotonicity in $s$. This is done below under the name ``property 1c''.

\medskip

\noindent \textbf{2. Degree one:} \emph{$Q_s$ has degree 1 for all $s$.}
As the vector field~\eqref{e:perturbed} is periodic and solutions of the corresponding ODE are unique, two solutions starting at the points $(x_0, y_0)$ and $(x_0, y_0 + 2 \pi)$ are related by the vertical shift by $2 \pi$. Hence, for any two points on the transversal (which is vertical) their images under the inverse-time \poincare map $\tilde Q_s$ also differ by the same shift.

\medskip

\noindent \textbf{3. Flat spots:} \emph{there are two flat spots for each $s$. More precisely, there are two disjoint closed arcs $I_1(s)$ and $I_2(s)$ such that $Q_s$ is constant on each arc.   The endpoints of these arcs depend on $s$ continuously.} Existence of two flat spots follows from the construction of $Q_s$ described in Section~\ref{s:poinc}. Indeed, by the first part of Proposition~\ref{p:circle-maps-family} there are two intervals $I_1$ and $I_2$ bounded by saddle separatrices (Figure~\ref{fig:homoclinic}) such that outside these intervals time-reversed \poincare map is defined, and we set $Q_s$ to be constant on these intervals. The continuity of the endpoints of $I_1(s)$ and $I_2(s)$ with respect to~$s$ follows from the continuous dependence of the separatrices on $s$ and their transversality to the \poincare section.

\medskip

\noindent \textbf{4. Expansivity:} \emph{
There exists $\lambda > 1$ such that for each $s$ the map $Q_s$ is $\lambda$-expanding outside $I_1 \cup I_2$. Namely, if
$z \not \in I_1 \cup I_2$ and $z \in \mathbb R$ is any lift of $z$,
$\tilde f_s(z)$ is differentiable in $z$ and $\frac{d}{dz}(\tilde f_s(z)) > \lambda$. Moreover, we can take $\lambda > 1 + c \eps$ with $c > 0$ depending only on the constant $\delta$ bounding from below $a$ and $b$ (recall the assumption $a, b > \delta > 0$ in Proposition~\ref{p:circle-maps-family}).
}

This follows from the following lemma proved below in Appendix~\ref{s:expand}.
\begin{lemma} \label{l:contract}
  There exists $\gamma > 0$ such that for any positive $\delta < \gamma$ there exist $\eps_0 > 0$ such that for any $a, b \in (\delta, \gamma)$ and for any positive $\eps < \eps_0$ the map $P$ is contracting except for the two points where it has a jump discontinuity (Figure~\ref{f:homoclinic-open}):
  \[
    \frac{d}{dz} P(z) < 1 - \eps.
  \]
\end{lemma}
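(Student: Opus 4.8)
The plan is to compute $\frac{dP}{dz}$ from the divergence of the flow along one return and then to exploit that every orbit that returns to $T$ must linger for a long time in a neighborhood of one of the two saddles, where the divergence of the flow is strictly negative. \emph{Step 1: a formula for the derivative.} For the planar flow $\dot{\bf x}={\bf v}({\bf x})+\eps{\bf f}({\bf x},\eps)$ and the vertical section $\tilde T_0=\{x=-\pi/2\}$ with coordinate $z=(y-x)/(2\pi)$, the standard variational computation (Liouville's formula $\det D\phi_t=\exp\int_0^t\Div(\cdot)\,ds$, together with the fact that the transverse component of the transported section vector is governed by this determinant and by the transverse velocity) gives
\[
  \frac{dP}{dz}(z)=\exp\!\Big(\int_0^{\tau(z)}\Div({\bf v}+\eps{\bf f})\,dt\Big)\cdot\frac{\dot x|_{\mathrm{start}}}{\dot x|_{\mathrm{end}}},
\]
where $\tau(z)$ is the return time and the integral is taken along the trajectory issued from $z$. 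On the section $\cos x=0$, so $\dot x=b+O(\eps)$ at both endpoints and the ratio is $1+O(\eps)$. Since ${\bf v}$ is Hamiltonian, $\Div({\bf v}+\eps{\bf f})=\eps\,\Div{\bf f}$, and Proposition~\ref{p:fenichel} together with the identity $\det H''=\tfrac12(\cos2x+\cos2y)$ gives $\Div{\bf f}=\cos2x+\cos2y+O(\eps)$ uniformly. Hence
\[
  \ln\frac{dP}{dz}(z)=\eps\int_0^{\tau(z)}\!(\cos2x+\cos2y)\,dt+O(\eps^2\tau(z))+O(\eps).
\]

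\emph{Step 2: every orbit through $T$ passes $O(\sqrt\gamma)$-close to a saddle.} On $T$ the Hamiltonian $H$ is an affine coordinate, $dH/dz=2\pi b$, so the whole circle $T$ spans an $H$-interval of length $2\pi b\le2\pi\gamma$, and the value of $H$ at each saddle lies in this interval. Thus any orbit issued from $T$ has $H$-value within $O(\gamma)$ of $H(S_i)$ for one of the saddles $S_i$, and since $H$ changes by only $O(\eps)$ along one winding (Remark~\ref{r:change-H-one-wind}) the whole forward orbit stays in $\{|H-H(S_i)|=O(\gamma)\}$. Because $\nabla H$ vanishes at a saddle and the Hessian is nondegenerate there, this level set passes within $O(\sqrt\gamma)$ of $S_i$; and the phase portrait described in Proposition~\ref{p:circle-maps-family} (Figure~\ref{f:homoclinic-open}) shows that one return of the orbit runs along the boundary of the homoclinic cell of one of the saddles and ``turns the corner'' there, so it comes within distance $d\le C\sqrt\gamma$ of that saddle.

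\emph{Step 3: the slow passage near the saddle dominates.} Fix a radius $r_0$, independent of all parameters, small enough that on the $r_0$-ball around each saddle one has $\cos2x+\cos2y\le-1$ and $|{\bf v}+\eps{\bf f}|\le C\,\mathrm{dist}(\cdot,\text{saddle})$. An orbit coming within $d$ of a saddle spends total time $\ge\tfrac2C\ln(r_0/d)\ge\tfrac1C\ln(1/\gamma)-C_1=:K_0$ inside that ball, since $\tfrac{d}{dt}\ln\mathrm{dist}\ge-C$ forces each of the ingoing and outgoing passages to last $\ge\tfrac1C\ln(r_0/d)$. On the other hand, outside the $r_0$-balls the speed of the channel orbit is bounded below and one return covers a bounded length (it follows a bounded number of separatrix edges), so the time spent outside the balls during one return is at most some constant $T_{\max}=T_{\max}(r_0)$. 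Writing $\sigma(z)=-\int_0^{\tau(z)}(\cos2x+\cos2y)\,dt$, we get $\sigma(z)\ge K_0-2T_{\max}$ and $\tau(z)\le\sigma(z)+3T_{\max}$. Plugging into Step 1 and choosing $\eps_0$ small enough that $O(\eps^2\tau)\le\tfrac12\eps\sigma+O(\eps^2)$, we obtain
\[
  \ln\frac{dP}{dz}(z)\le-\tfrac12\eps\,\sigma(z)+O(\eps)\le-\tfrac12\eps\,(K_0-2T_{\max})+O(\eps).
\]
Since $K_0\to\infty$ as $\gamma\to0$, we first fix $\gamma$ small enough that $K_0-2T_{\max}$ is as large as needed (this is the ``there exists $\gamma>0$'' of the statement), making the right side $\le-2\eps$, which is $<\ln(1-\eps)$; hence $\frac{dP}{dz}(z)<1-\eps$ for all $\eps\in(0,\eps_0)$. (This covers both the bounded-$\tau$ regime, where $K_0$ is the uniform lower bound, and the regime near the jumps, where $\tau\to\infty$ only makes $\sigma$ larger.)

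\emph{Main obstacle.} The analytic ingredients — the return-map derivative formula, Liouville's formula, the identity $\det H''=\tfrac12(\cos2x+\cos2y)$ — are routine. The real work lies in the geometric claims used in Steps 2 and 3, uniformly over $a,b\in(\delta,\gamma)$ and $\eps\in(0,\eps_0)$: that the channel on $T$ has $H$-width $O(\gamma)$ and routes every orbit within $O(\sqrt\gamma)$ of a saddle, and that an orbit spends only a bounded time away from the $r_0$-neighborhoods of the saddles during one return (which needs the bounded combinatorial complexity of one return and a uniform lower bound on the flow speed away from the saddles and the cell centers). These rest on the perturbed phase-portrait analysis underlying Proposition~\ref{p:circle-maps-family}, with only the crudest Lipschitz form of the hyperbolic structure at a saddle required; I expect controlling the uniform constants — especially $T_{\max}$ and the implied constant in ``$O(\sqrt\gamma)$'' — to be the fiddly part, but nothing beyond standard hyperbolic-dynamics estimates is needed.
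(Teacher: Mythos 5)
Your proposal shares the paper's skeleton: expand $\frac{dP}{dz}$ via Liouville's formula (the paper's Corollary~\ref{c:d-poincare}) and show $\int \Div{\bf f}\,dt$ over one return is $\lesssim -|\ln\gamma|$, then exponentiate. The routes to the divergence bound differ. The paper (Lemma~\ref{e:int-div-poinc}) splits it into (i) a one-dimensional comparison estimate $t_1-t_0>|\ln b|$ obtained from $\dot x\approx b$ near the transversal $x=-\pi/2$, requiring \emph{no} claim about passage near a saddle; and (ii) a bound of the \emph{time-average} of $\Div{\bf f}$ by a negative constant, using that the return orbit only spends time $O(\gamma_0)$ in the positive-divergence central squares. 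You instead bound the \emph{closest approach} $d$ of the return orbit to a saddle by $O(\sqrt\gamma)$ (from the $H$-width $O(\gamma)$ of $T$), deduce time $\gtrsim\ln(1/\gamma)$ in the saddle ball where $\Div{\bf f}\le-1$, and bound the remaining time by a constant $T_{\max}$. Both are sound, and yield comparable estimates. The paper's step (i) is the cleaner ingredient — a pure ODE comparison that works for any starting point on $T$ — whereas your version front-loads the geometry: you need (a) that the one-return arc actually runs around the saddle corner so that it realizes the $O(\sqrt\gamma)$-closest approach of its level set (this is true, from the phase portrait of Proposition~\ref{p:circle-maps-family}, but is not automatic from $H$-considerations alone), and (b) that the time spent outside the $r_0$-balls is bounded uniformly in $a,b\in(\delta,\gamma)$, which relies on the channel orbit staying away from the sources and on the bounded combinatorial complexity of one return (cf.\ Lemma~\ref{l:bounded-length}). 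You flagged these as the fiddly part, correctly; they are the same kind of claim the paper makes in step (ii), just allocated differently. In short: same framework, different and slightly more geometry-heavy implementation of the divergence estimate, arriving at the same conclusion.
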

\noindent As $Q$ is inverse to $P$ outside the flat spots, this implies expansivity for $Q$:
\[
  \frac{d}{dz} Q = \Big( \frac{d}{dz} P \Big)^{-1} > (1 - \eps)^{-1} > 1 + \eps.
\]

\medskip

\noindent \textbf{5. Flat spots heights increase with the parameter:}
\emph{ $\frac{d}{ds} \tilde b_j(s) > 0.15$ for all $s$, $j=1, 2$.} As $s=-a$, this follows from the following lemma proved below in Appendix~\ref{a:plateau-heights}.
\begin{lemma} \label{l:plateau-heights}
  For any positive $\delta < 0.05$ there exists $\eps_0 > 0$ such that for any $a, b \in (\delta, \; 0.1-\delta)$ and any positive $\eps < \eps_0$,
  \[
    \frac{\partial}{\partial a} \tilde b_j(a, b, \eps) < -0.15.
  \]
\end{lemma}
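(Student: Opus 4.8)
The plan is to compute $\frac{\partial}{\partial a}\tilde b_j$ by tracking how the backward trajectory from the point where the unstable manifold of the saddle meets the transversal depends on $a$, using the near-Hamiltonian structure from Proposition~\ref{p:fenichel}. Recall $\tilde b_j(s)$ is the $z$-coordinate of the point $b_j$ where the \emph{stable} manifold of the $j$-th saddle hits $T$; equivalently, via the formula $z = (y-x)/(2\pi)$ and $H = \cos x \cos y - ax + by$, on the transversal $\tilde T_k$ we have $\cos x \cos y = 0$, so $z$ and $H$ differ by an affine function of $(a,b)$. The idea is that $b_j$ is the point whose backward flow converges to the saddle $q_j(a,b,\eps)$, so the relevant quantity to control is the value of $H$ (for the unperturbed system) at $q_j$ minus the value at $b_j$, which by the Hamiltonian-is-preserved heuristic should be $O(\eps)$; then $\tilde b_j$ is, to leading order, a known explicit function of $a,b$ coming from the saddle location and the rotation formula~\eqref{e:rotation}.

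Concretely, I would proceed as follows. First, I would locate the two saddles of~\eqref{e:perturbed} as perturbations of the saddles $(\pm\frac\pi2,\pm\frac\pi2)$ of $\cos x\cos y$: for $a=b=0$ there are saddles at the lattice points, and for small $a,b,\eps$ the implicit function theorem gives two surviving saddles $q_1,q_2$ (the other lattice points becoming sources, as stated in Proposition~\ref{p:circle-maps-family}(1a)), with $q_j = q_j^{(0)} + O(|a|+|b|+\eps)$. Second, I would express the height $\tilde b_j$ in terms of the value of the unperturbed Hamiltonian along the stable separatrix: since $H$ changes by only $O(\eps)$ over one winding (Remark~\ref{r:change-H-one-wind}, invoked in the excerpt), and the stable manifold asymptotes to $q_j$, one gets $H(b_j) = H(q_j) + O(\eps)$, where $H$ here is the unperturbed Hamiltonian $\cos x\cos y - ax + by$ evaluated on the transversal. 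Third, using $\cos x\cos y=0$ on $T$ and the affine relation between $z$ and $H$ there, I would differentiate in $a$: the main term is $\frac{\partial}{\partial a}\big[H(q_j)/(2\pi b)\big]$ up to the affine shift, and since $H(q_j) = \cos x_j\cos y_j - a x_j + b y_j$ with $\frac{\partial}{\partial a}(\cos x_j\cos y_j) = O(|a|+|b|+\eps)$ (the gradient of $\cos x\cos y$ vanishes at the unperturbed saddle) and $x_j = \pm\frac\pi2 + O(\cdots)$, the dominant contribution is $\mp\frac{1}{2\pi b}\cdot\frac\pi2 = \mp\frac{1}{4b}$. With $b < 0.1-\delta < 0.1$ this is bounded below in absolute value by $\frac{1}{0.4} = 2.5$, comfortably exceeding $0.15$, and the remaining terms are $o(1)$ as $\eps,\delta\to 0$ together with the smallness of $a,b$; one checks the sign works out so that $\frac{\partial}{\partial a}\tilde b_j < -0.15$ for both $j$.

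The bookkeeping I must be careful about is: (i) the affine normalization tying the $z$-coordinate on $T$ to $H$ depends on $a$ and $b$ too, so differentiating in $a$ picks up an extra explicit term that must be combined with the $\pm\frac{1}{4b}$ term — this requires writing down the normalization from Section~\ref{s:poinc} precisely and checking the two contributions reinforce rather than cancel; and (ii) controlling the $O(\eps)$ error in $H(b_j) = H(q_j) + O(\eps)$ \emph{after} differentiating in $a$, i.e. showing $\frac{\partial}{\partial a}$ of this error is also small — this needs $C^1$-control of the perturbed separatrix in the parameter $a$, which follows from $C^r$-smoothness of $\mathcal M_\eps$ and of ${\bf f}$ in Proposition~\ref{p:fenichel} together with smooth dependence of the saddle and its stable manifold on parameters (standard for hyperbolic fixed points).

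The main obstacle I expect is step (ii): making the estimate uniform requires that the convergence of the backward orbit of $b_j$ to the saddle, and its derivative with respect to $a$, be controlled uniformly down to $\eps\to 0^+$ — near $\eps=0$ the stable manifold's "return time" to a neighborhood of the saddle blows up (the homoclinic loop is nearly closed), so one must argue that the accumulated $a$-derivative of the Hamiltonian defect over this long transit is still $O(\eps)$, presumably by the same mechanism that gives the $O(\eps)$ bound on the Hamiltonian change per winding, differentiated in the parameter. Once that uniformity is in hand, the leading-order computation $\frac{\partial}{\partial a}\tilde b_j \approx -\frac{1}{4b} + (\text{explicit normalization term}) < -0.15$ for $b < 0.1$ is routine. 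Everything else — locating the saddles, the affine $z$–$H$ relation, the $\eps_0(\delta)$ dependence — is straightforward given the results already established in the excerpt, in particular Proposition~\ref{p:fenichel}, Proposition~\ref{p:circle-maps-family}(1), and Remark~\ref{r:change-H-one-wind}.
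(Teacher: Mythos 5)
Your high-level plan — compute the derivative at $\eps=0$ from the explicit Hamiltonian formula and then perturb — is the same as the paper's, but there is a genuine quantitative gap in the step you call ``routine,'' and it lands on exactly the point you flag as needing to be ``checked.'' The two contributions you ask whether they ``reinforce rather than cancel'' do in fact \emph{cancel exactly} for one of the two saddles, so your $-\frac{1}{4b}$ estimate (and the resulting ``comfortably exceeding $0.15$'' conclusion) is wrong. Concretely: on $T$ we have $x=-\pi/2$ and $\cos x\cos y=0$, so $z = \frac{H}{2\pi b}-\frac{a}{4b}+\frac14$. Differentiating, $\frac{\partial \tilde b_j}{\partial a}=\frac{1}{2\pi b}\frac{\partial H(q_j)}{\partial a}-\frac{1}{4b}$. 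For the left saddle $x_j\approx-\frac\pi2$, so $\frac{\partial H}{\partial a}\approx+\frac\pi2$ and the first term is $+\frac{1}{4b}$, cancelling the normalization term exactly; the residual (coming from the $\pm K$ term in the saddle's Hamiltonian value, Lemma~\ref{l:Hamilt-at-saddles}) is $-\frac{\mu}{2\pi b}$ with $\mu=\frac12(\arcsin(b+a)+\arcsin(b-a))\approx b$, giving $\frac{\partial\tilde b_1}{\partial a}\approx-\frac{1}{2\pi}\approx-0.159$. That only just clears the $-0.15$ threshold, which is the whole reason the number $0.15$ appears in the statement. Your estimate is fine for the right saddle (there the two $\frac{1}{4b}$ terms add), but the lemma has to cover both. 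The paper's Lemma~\ref{l:plateau-heights-hamiltonian} does the exact $\eps=0$ computation and finds $\frac{\partial y_1}{\partial a}=-\mu/b$, $\frac{\partial y_2}{\partial a}=(-\pi+\mu)/b$, then proves $\mu>b$ to get $<-1$ for both.

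Your perceived ``main obstacle'' (ii) is also heavier than it needs to be. You worry about propagating an $O(\eps)$ estimate on the Hamiltonian defect through a $\partial/\partial a$ with uniform control as $\eps\to 0^+$, because the transit time near the nearly-closed homoclinic loop blows up. The paper avoids this entirely by a softer argument: the free stable separatrix of the perturbed saddle depends $C^\infty$ on $(a,b,\eps)$ in a full neighborhood of $(a_0,b_0,0)$ \emph{including negative $\eps$}, and intersects $T$ transversally, so $\tilde b_j(a,b,\eps)$ is $C^\infty$ across $\eps=0$ and the derivative $\frac{\partial \tilde b_j}{\partial a}$ is simply continuous there. Once you have the strict inequality $<-\frac{1}{2\pi}$ at $\eps=0$, the $<-0.15$ bound for small $\eps$ follows by continuity, with no need to control the long near-homoclinic excursion. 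You should replace the approximate tracking argument with this smoothness-across-$\eps=0$ observation, and replace the leading-order heuristic with the exact Hamiltonian computation so the near-cancellation is visible.
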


\medskip

\noindent \textbf{1c. Monotonicity in the parameter:} \emph{$\tilde Q_s(z)$ is non-decreasing with respect to $s$.}
We need the following lemma, proved in Appendix~\ref{a:monotone}. Recall that $\tilde P: \mathbb R \mapsto \mathbb R$ denotes the forward-time \poincare map (Section~\ref{s:poinc}); $\tilde P =\tilde P_{a, b, \eps} $  depends on the parameters $a$, $b$, and $\eps$.

\begin{lemma} \label{l:monotone}
  The exists $\gamma > 0$ such that for any positive $\delta < \gamma$ there exists $\eps_0 > 0$ such that for any $a, b \in (\delta, \gamma)$ and for any positive $\eps < \eps_0$,
 \[
 \frac{\partial}{\partial a} \tilde P(z) > 0,
 \]
 provided that $z \in \mathbb R$ is such that $\tilde P(z)= \tilde P_{a, b, \eps}(z)$ is defined.
\end{lemma}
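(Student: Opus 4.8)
The plan is to compute $\partial_a\tilde P$ from the variational equation and to exploit the fact that the unperturbed part of the flow is Hamiltonian. Write $F = F_{a,b,\eps} = {\bf v} + \eps{\bf f}$ for the right‑hand side of~\eqref{e:perturbed}, let ${\bf x}(t)$ be the forward trajectory of a point $z\in\tilde T_0$, let $T = T(z,a)$ be the first time it reaches $\tilde T_1$, and let $w(t) = \partial_a{\bf x}(t)$ be the variational vector, so $\dot w = (DF)\,w + \partial_a F$ and $w(0) = 0$. Differentiating the return map together with the relation ${\bf x}_x(T) = \pi/2$ gives
\[
  \partial_a\tilde P(z) \;=\; \frac{F({\bf x}(T))\wedge w(T)}{2\pi\,F_x({\bf x}(T))},
\]
$\wedge$ being the scalar cross product. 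Since $F_x = \dot x = b + O(\eps) > 0$ on the transversal, it suffices to show $\omega(T) > 0$ for $\omega(t) := F({\bf x}(t))\wedge w(t)$. Now $\omega$ obeys a scalar linear ODE: using $\dot F = (DF)F$ and the identity $(Av)\wedge w + v\wedge(Aw) = (\operatorname{tr}A)(v\wedge w)$ for $2\times2$ matrices,
\[
  \dot\omega \;=\; (\Div F)\,\omega \;+\; F\wedge\partial_a F,\qquad \omega(0) = 0 .
\]

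Next I would insert the structure of $F$. Since ${\bf v}$ is Hamiltonian, $\Div{\bf v} = 0$, so $\Div F = \eps\Div{\bf f}$; since $\partial_a{\bf v} = (0,1)$, a one‑line computation shows that the $\eps{\bf f}_x$ terms cancel and $F\wedge\partial_a F = \dot x + \eps\,\Xi$, with $\Xi = {\bf v}\wedge\partial_a{\bf f} + \eps\,{\bf f}\wedge\partial_a{\bf f}$. Solving the ODE,
\[
  \omega(T) \;=\; \int_0^T E(s)\big(\dot x(s) + \eps\,\Xi(s)\big)\,ds,\qquad E(s) := \exp\!\Big(\eps\!\int_s^T\Div{\bf f}\,d\tau\Big) > 0 .
\]
For $\eps = 0$ this is $\int_0^T\dot x\,ds = x(T) - x(0) = \pi$, which recovers the rotation ($\partial_a\tilde P_0 = 1/2b$). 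For $\eps > 0$, integrating the first term by parts (using $E' = -\eps(\Div{\bf f})E$ and $E(T) = 1$, $x(0) = -\pi/2$, $x(T) = \pi/2$) turns it into the boundary term $\pi$ plus a remainder, so that
\[
  \omega(T) \;=\; \pi \;+\; \eps\!\int_0^T E(s)\Big[\Div{\bf f}(s)\big(x(s)+\tfrac\pi2\big) + \Xi(s)\Big]\,ds .
\]
It thus remains to show that the correction integral exceeds $-\pi$.

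To control the correction, I would split one winding into a bounded number of \emph{regular arcs}, on which the trajectory stays outside fixed small neighbourhoods (radius $R_0$) of the two perturbed saddles, and \emph{saddle sojourns}, near a saddle, whose transit time can be as large as $O(\log(1/d))$ with $d$ the closest approach. On regular arcs the transit time is uniformly bounded, so their share of the correction is $O(\eps)$. On a sojourn I would use that $\nabla H$, ${\bf v}$ and ${\bf f}_0 = -D{\bf v}\cdot{\bf v}$ all vanish at the unperturbed saddle, together with the explicit $H$: (i) there $\dot x = O(r+\eps)$, $\Xi = O(r+\eps)$ and $x + \pi/2 = x_\ast + \pi/2 + O(r)$, $r$ being the distance to the saddle and $x_\ast$ its $x$‑coordinate; (ii) $\Div{\bf f} = 2\det H'' + O(\eps) = -2 + O(\gamma^2 + r + \eps)$ there, because $\det H'' = \cos^2 x + \cos^2 y - 1 \approx -1$ near $(\tfrac\pi2,\tfrac\pi2)$, so $E$ decays along the sojourn at rate $\asymp\eps$; (iii) hence, in $C^1$‑linearising coordinates near the saddle, $\eps\int_{\mathrm{soj}}E\,ds$ is $O(\eps)$ for shallow sojourns and saturates at $\tfrac12 e^{\eps B}\big(1 + O(\gamma^2+R_0+\eps)\big)$ for deep ones, where $e^{\eps B}$ is the weight just after the sojourn, while $\int_{\mathrm{soj}}E\,r\,ds = O(R_0)$ and $\eps\int_{\mathrm{soj}}E\,\Xi\,ds = O(\eps)$. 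Adding these up, a deep sojourn near a saddle contributes $-x_\ast e^{\eps B}\big(1+O(\cdots)\big)$ to the correction. Since the two saddles sit near $x = \tfrac\pi2$ and $x = -\tfrac\pi2$, and — by the phase portrait of Proposition~\ref{p:circle-maps-family} — are met in a definite order within one winding, a deep sojourn near the one close to $\tilde T_1$ occurring last and suppressing the weight of any earlier sojourn, one finds in every case
\[
  \omega(T) \;\ge\; \tfrac\pi2 + \delta - O(\gamma^2 + R_0 + \eps)\;>\;0
\]
once $\gamma$ and $R_0$ are taken small and then $\eps < \eps_0$; hence $\partial_a\tilde P > 0$.

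The main difficulty is step (iii): estimating $\int E\,ds$, $\int E\,r\,ds$, $\int E\,dx$ over a passage of unbounded length near a hyperbolic saddle, uniformly in $d$, and checking that the exponential decay of $E$ (rate $\asymp\eps$, due to $\Div{\bf f}<0$ near the saddle) precisely offsets the logarithmic growth of the transit time. This calls for a Dulac‑type analysis in linearising coordinates that keeps track of the $O(\eps)$ perturbation of the saddle's eigenvalues and of the dependence on $a$; one also has to read off from the phase portrait built for Proposition~\ref{p:circle-maps-family} that within one winding each saddle neighbourhood is entered at most once and that a deep sojourn near the saddle adjacent to $\tilde T_1$ comes last — this ordering is exactly what makes the otherwise borderline cancellation $\pi - x_\ast e^{\eps B}\approx\tfrac\pi2 + b$ come out with the correct sign.
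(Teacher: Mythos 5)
The variational/Wronskian setup is correct and is a genuinely different route from the paper's argument: you compute $\partial_a\tilde P = \frac{F\wedge w(T)}{2\pi F_x}$, derive the scalar ODE $\dot\omega = (\Div F)\omega + F\wedge\partial_aF$, solve it, and integrate by parts to isolate the Hamiltonian part. By contrast, the paper works with finite-parameter increments, builds the ``tube'' $\mathcal T$ bounded by two trajectories $l_0, l_1$ with nearby $a$-values, and controls $\Delta H_1 - \Delta H_0$ via the flux/divergence lemmas (Lemmas~\ref{l:flux-bound}--\ref{l:area-2}), obtaining a clean $O(\sqrt\eps)$ correction to the unperturbed value $\frac{\pi/2 - x({\bf x}_0)}{b}$. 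Your approach and theirs are in a sense dual (infinitesimal vs.\ finite-difference), and your algebra through the integration-by-parts identity $\omega(T) = \pi + \eps\int_0^T E\bigl[\Div{\bf f}\,(x+\tfrac\pi2) + \Xi\bigr]\,ds$ is correct, as is the check $\omega(T)|_{\eps=0} = \pi$ giving $\partial_a\tilde P_0 = \frac{1}{2b}$.

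However, the core estimate of the correction term is wrong in a way that changes the nature of the bound. In your correction integral the factor multiplying $\Div{\bf f}$ is $x+\tfrac\pi2$, so near a saddle at $x_\ast$ it is $x_\ast + \tfrac\pi2$, \emph{not} $x_\ast$. Writing $\Div{\bf f} \approx -D$ near the saddle, one has $\eps\int_{\mathrm{soj}}E\,ds \to E(s_2)/D$ for a deep sojourn, so the sojourn's contribution is $-D(x_\ast+\tfrac\pi2)\cdot E(s_2)/D = -(x_\ast+\tfrac\pi2)E(s_2)$, not $-x_\ast E(s_2)$. (Note also that $D$ cancels, so the $\Div{\bf f}\approx -2$ approximation is unnecessary if one keeps $D$ symbolic.) With the corrected contribution, a deep sojourn near the right saddle ($x_\ast = \tfrac\pi2 - \mu$ with $\mu = \tfrac12(\arcsin(a+b)+\arcsin(b-a))\approx b$) gives $\omega(T)\approx \pi - (x_\ast+\tfrac\pi2) = \tfrac\pi2 - x_\ast = \mu$, \emph{not} $\tfrac\pi2 + b$ as you write. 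This is consistent with the paper's Corollary~\ref{c:d-da-2}: the leading term is $\frac{L}{b}$ with $L = \tfrac\pi2 - x_R = \mu$. So the conclusion is not a comfortable $\omega(T)\gtrsim\tfrac\pi2$; it is the delicate $\omega(T)\gtrsim\mu$, which can be as small as $\delta$.

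This makes the error control essential, and your stated error $O(\gamma^2 + R_0 + \eps)$ is not adequate: $\gamma$ is fixed once and for all in the lemma statement, while $\delta\in(0,\gamma)$ and $\mu\approx b\ge\delta$ can be arbitrarily smaller than $\gamma^2$. The $O(\gamma^2)$ term, coming from $\Div{\bf f} = -2 + O(\gamma^2+\cdots)$, disappears if one avoids the $\Div{\bf f}\approx -2$ approximation (as noted, $D$ cancels in the leading term), or, more directly, if one estimates $\int_0^T E\dot x\,ds$ without integrating by parts, by splitting at the sojourn: the pre-sojourn piece is suppressed by the factor $E(s_1)\to 0$, the sojourn piece is $O(R_0)$ since $\dot x = O(r)$ and $\int_{\mathrm{soj}}r\,ds = O(R_0)$, and the post-sojourn piece is $(1+O(\eps))(\tfrac\pi2 - x(s_2))$. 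That route gives $\omega(T)\ge\mu - C(R_0 + \eps\log(1/R_0))$ with $R_0, \eps$ chosen after $\delta$, and never needs $\gamma^2$. Finally, you yourself flag step~(iii) --- the Dulac-type analysis giving uniform-in-$d$ bounds for $\int E$, $\int Er$, $\int E\dot x$ near the saddle, and the combinatorial claim that within one winding each saddle neighbourhood is entered at most once in a definite order --- as unproven; since all the delicacy of the lemma lives there (the conclusion $\omega(T)\gtrsim\mu$ is borderline, not robust), the proof as written has a genuine gap.
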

\noindent Now we can prove monotonicity of $\tilde Q(z)$ with respect to $s=-a$. If $z$ is outside the flat spots, it follows from Lemma~\ref{l:monotone}, as $\tilde Q$ is (locally) the inverse of $\tilde P$. By continuity, this also implies that $\tilde Q$ is non-decreasing in $s$ if $z$ is on the boundary of a flat spot. Finally, monotonicity for $z$   strictly inside a flat spot follows from the monotonicity of flat spots' heights discussed above. \qed

\newpage

\begin{appendices}

\section{Incorporating external force in the fluid velocity field} \label{a:gravity}
In this appendix we show how an adding an external force acting on a particle carried by fluid flow results in the model~\eqref{e:MR},~\eqref{e:fluid}  we consider.  This model was proposed by Maxey and Corrsin~\cite{maxey1986gravitational}, and the discussion below follows their paper.

Let ${\bf u}({\bf x})$ be the fluid velocity at the point ${\bf x}$, and let the constant vector ${\bf g}$ be the free fall acceleration.
A simplified form of Maxey-Riley equation is Newton's second law for the position ${\bf x}(t)$ of a spherical aerosol particle carried by the fluid and pulled by gravity:
\[
  m\ddot {\bf x} = -k (\dot {\bf x} - {\bf u}({\bf x})) + m{\bf g},
\]
where $m$ is the particle mass and $k$ is the drag coefficient; for spherical particles
$k=6 \pi a \mu$ where $a$ is the radius and $\mu$ is the fluid viscosity.
Setting ${\bf w} = m{\bf g}/k$ (this vector can be interpreted as the terminal velocity of the particle in a still fluid) and letting  ${\bf v} = {\bf u} + {\bf w}$   gives
\[
  m\ddot {\bf x} = -k (\dot {\bf x} - {\bf v}({\bf x})).
\]
Finally, dividing    by $m$ and setting $\eps = m/k = m/(6 \pi a \mu)$ yields  the model~\eqref{e:MR},~\eqref{e:fluid}:
\[
\ddot {\bf x} = - \frac{1}{\eps} ( \dot {\bf x} - {\bf v} ( {\bf x} ) ),
\]
where
\[
    {\bf v} = {\bf u} + {\bf w},
    \qquad \eps = m/(6 \pi a \mu),
    \qquad {\bf w} = m{\bf g}/(6 \pi a \mu).
\]
This shows that adding external force is equivalent to modifying the ``carrying'' vector field.

\section{Properties of the Hamiltonian flow} \label{a:saddles}
In this appendix, we establish several properties of the Hamiltonian flow~${\bf v}$,~\eqref{e:fluid} and~\eqref{e:fluid-ode}. For brevity, we will refer to the two branches of the invariant manifold of a saddle as \emph{separatrices} throughout this appendix. Accordingly, in our notation, each saddle has two stable separatrices and two unstable ones.

Consider the grid
$\Gamma = \{(k_1 \pi + \frac \pi 2, k_2 \pi + \frac \pi 2), \; k_1, k_2 \in \mathbb Z\}$ formed by the saddles of this flow when $a=b=0$ so that   $H = \cos x \cos y$.
When $a$ and $b$ are small, the actual saddles are close to the nodes of $\Gamma$, and we index these saddles by the nearest nodes of $\Gamma$. We shall say that a saddle of~${\bf v}$    is \emph{odd} if $k_1 + k_2$ for the nearest node of $\Gamma$ is odd, and \emph{even} otherwise.

\begin{lemma} \label{l:Hamilt-at-saddles}
    Suppose $0 \le a, b \le 0.1$.
    Then, the Hamiltonian~\eqref{e:fluid} at a saddle of ${\bf v}$ is
    \begin{equation}\label{e:HG}
        H =  b y_\Gamma - a x_\Gamma \pm K
    \end{equation}
    with the plus sign if the saddle is odd and the minus sign if it is even.
    Here $x_\Gamma, y_\Gamma$ are the coordinates of the nearest node of $\Gamma$, and
    \begin{equation} \label{e:K}
    K = \frac 1 2 \Big( f(b-a) - f(b+a) \Big)
    \qquad
    \text{with }
    f(x) = \sqrt{1-x^2} + x \arcsin x.
    \end{equation}
\end{lemma}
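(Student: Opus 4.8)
The plan is to uncover a hidden separation of variables. Set $\xi = x+y$ and $\eta = y-x$, so $x=(\xi-\eta)/2$, $y=(\xi+\eta)/2$. The product-to-sum formula gives $\cos x\cos y = \tfrac12(\cos\xi+\cos\eta)$, while $-ax+by = \tfrac{b-a}{2}\xi+\tfrac{b+a}{2}\eta$, so the Hamiltonian~\eqref{e:fluid} splits as $H = G_1(\xi)+G_2(\eta)$ with
\[
 G_1(\xi)=\tfrac12\cos\xi+\tfrac{b-a}{2}\xi,\qquad
 G_2(\eta)=\tfrac12\cos\eta+\tfrac{b+a}{2}\eta .
\]
Rewriting the equilibrium conditions $\dot x=\dot y=0$ of~\eqref{e:fluid-ode} in these variables (take $\dot x+\dot y$ and $\dot y-\dot x$) gives $\sin\eta = b+a$ and $\sin\xi = b-a$, which are exactly $G_2'(\eta)=0$ and $G_1'(\xi)=0$. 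Hence every equilibrium of ${\bf v}$ is a pair (critical point $\xi^*$ of $G_1$, critical point $\eta^*$ of $G_2$), and the value of $H$ there is $G_1(\xi^*)+G_2(\eta^*)$.

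Next I would locate the critical point near a given node and evaluate $G_1,G_2$ there. Since $0\le a,b\le 0.1$ we have $|b\pm a|\le 0.2<1$, so near a node $\xi_\Gamma\in\pi\mathbb Z$ the equation $\sin\xi=b-a$ has a unique nearby root $\xi^*=\xi_\Gamma+\sigma_\xi\arcsin(b-a)$, where $\sigma_\xi=+1$ if $\xi_\Gamma$ is an even multiple of $\pi$ and $\sigma_\xi=-1$ if odd; similarly $\eta^*=\eta_\Gamma+\sigma_\eta\arcsin(b+a)$. Then $\cos\xi^*=\sigma_\xi\sqrt{1-(b-a)^2}$ (its sign is that of $\cos\xi_\Gamma$), and a short computation with $f(t)=\sqrt{1-t^2}+t\arcsin t$ yields
\[
 G_1(\xi^*)=\tfrac{b-a}{2}\xi_\Gamma+\tfrac{\sigma_\xi}{2}f(b-a),\qquad
 G_2(\eta^*)=\tfrac{b+a}{2}\eta_\Gamma+\tfrac{\sigma_\eta}{2}f(b+a).
\]

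Finally comes the parity bookkeeping. For a node $(x_\Gamma,y_\Gamma)=(\tfrac{\pi}{2}+k_1\pi,\tfrac{\pi}{2}+k_2\pi)$ one has $\xi_\Gamma=(k_1+k_2+1)\pi$ and $\eta_\Gamma=(k_2-k_1)\pi$; since $k_1+k_2$ and $k_2-k_1$ have the same parity, an \emph{even} saddle ($k_1+k_2$ even) has $\xi_\Gamma$ an odd and $\eta_\Gamma$ an even multiple of $\pi$, so $\sigma_\xi=-1$, $\sigma_\eta=+1$; an \emph{odd} saddle has the reverse. Substituting into $H=G_1(\xi^*)+G_2(\eta^*)$ and using $\tfrac{b-a}{2}\xi_\Gamma+\tfrac{b+a}{2}\eta_\Gamma=by_\Gamma-ax_\Gamma$, the two $f$-terms collapse to $-\tfrac12\big(f(b-a)-f(b+a)\big)=-K$ for an even saddle and to $+K$ for an odd one, i.e. $H=by_\Gamma-ax_\Gamma\mp K$ with the sign matching~\eqref{e:HG}. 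As a byproduct, in every case $\xi^*$ is a local minimum of $G_1$ precisely when $\eta^*$ is a local maximum of $G_2$ and vice versa, so $G_1''(\xi^*)G_2''(\eta^*)<0$, hence $\det H''<0$ there; this confirms that these equilibria really are saddles, and conversely a saddle of ${\bf v}$ must pair a max with a min in $(\xi,\eta)$ and therefore lies near a node of $\Gamma$, so the indexing in the statement is well posed. The only genuinely delicate part is this sign/parity accounting in the last two steps — keeping track of which multiple of $\pi$ each of $\xi_\Gamma,\eta_\Gamma$ is, and hence of the signs of $\cos\xi^*,\cos\eta^*$ and of the resulting $f$-terms; once the splitting $H=G_1(\xi)+G_2(\eta)$ is in place, everything else is a routine explicit calculation.
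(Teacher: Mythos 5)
Your proof is correct and follows essentially the same route as the paper: a $45^\circ$ change of variables $\xi=x+y$, $\eta=y-x$ (the paper uses $X=x+y$, $Y=x-y$) under which $H$ splits and the saddle coordinates become explicit via $\arcsin$, followed by direct substitution. The extra material you include --- the $\sigma_\xi,\sigma_\eta$ sign bookkeeping made explicit, and the observation $G_1''G_2''<0\Rightarrow\det H''<0$ to justify that the critical points near nodes of $\Gamma$ are precisely the saddles --- fills in details the paper dismisses as ``a straightforward computation,'' but the underlying argument is the same.
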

\begin{proof}
It is convenient to rewrite
\[
    H = \frac{1} 2 \Big(\cos(x+y) + \cos(x-y)\Big) + by - ax
\]
and use $X=x+y$, $Y=x-y$ as new variables\footnote{This is not a canonical transformation, but we will not write the Hamilton equations in the new coordinates; we just want to find the saddle points of the function $H$.}. Then, $x=\frac{X+Y} 2$, $y=\frac{X-Y} 2$, and
\begin{equation} \label{e:appB-H-X-Y}
  H = \frac 1 2 \Big( \cos X + \cos Y + (b-a) X - (a+b) Y \Big).
\end{equation}
A straightforward computation yields   the coordinates of the odd saddles:
\[
    X = \arcsin(b-a) + 2\pi k,
    \qquad
    Y = \pi + \arcsin(a+b) + 2 \pi l
\]
and of the even saddles:
\[
    X = \pi - \arcsin(b-a) + 2\pi k,
    \qquad
    Y = -\arcsin(a+b) + 2 \pi l.
\]
Substitution   into~\eqref{e:appB-H-X-Y} yields ~\eqref{e:HG}.
\end{proof}

\medskip

\textbf{``Chess'' game rule -- the proof.}
Here we prove that the forward-unbounded  trajectories of~${\bf v}$ wind their way according   the combinatorial ``Chess game'' rule stated in Section~\ref{s:Ham}.
It is clear that when $a$ and $b$ are small, unbounded trajectories are close to the edges of $\Gamma$ and turn near its vertices.
Trajectories turning left and right are separated by the stable manifold of the nearby saddle of~${\bf v}$; solutions precisely on the separatrix are bounded and thus are not considered. Consider an unbounded trajectory making a turn near a saddle. Let $H_0$ be the value of $H$ on this trajectory, and let $H_{s}$ be the value of $H$ on the stable manifold which this trajectory follows before turning left or right as after approaching the saddle with the same value $ H_s$ of the Hamiltonian. And the turn direction is determined by the sign of $H_s - H_0$.
The values of $H_s$ are computed above in Lemma~\ref{l:Hamilt-at-saddles}, and the answer was different for odd and even saddles, this is the reason for the chess coloring used in the statement of the ``Chess'' rule.
For an odd saddle near a node $(x_\Gamma, y_\Gamma)$, we have $H_s = b y_\Gamma - a x_\Gamma + K$, so that a left turn happens if $b y_\Gamma - a x_\Gamma + K < H_0$, while a right turn happens if $b y_\Gamma - a x_\Gamma + K > H_0$.
The equation of the line corresponding to odd vertices is thus $b y - a x = c_o$ with $c_o=H_0 - K$; the turn is determined by the position of the grid node relative to this line. For even vertices, the same considerations work, but give another line: $b y - a x = c_e$ with $c_e = H_0 + K$.
\qed

\medskip

\begin{lemma} \label{l:ham-sep}
  For any $a, b \in (0, 0.1]$ the Hamiltonian flow ${\bf v}$ has one saddle and one center in each \emph{cell} $[-\frac \pi 2 + \pi k, \frac \pi 2 + \pi k] \times [-\frac \pi 2 + \pi l, \frac \pi 2 + \pi l]$ with $k, l \in \mathbb Z$. Two separatrices of the saddle form a homoclinic loop fully contained inside the cell, while two other separatrices do not form a loop (we shall call them \emph{free} separatrices).

  If, additionally, $a \le 1.5b$, then the free stable separatrix of the saddle intersects the transversal $x = - \frac \pi 2 + \pi k$ and the free unstable separatrix intersects the transversal $x = \frac \pi 2 + \pi k$.
  Moreover, the \poincare map $P_0$ on the torus (Section~\ref{s:poinc}) is defined at all points on the transversal other than the two points where free stable separatrices of the two saddles intersect the transversal.
\end{lemma}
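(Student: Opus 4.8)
The plan is to analyze the Hamiltonian $H = \frac12(\cos X + \cos Y) + \frac{b-a}{2}X - \frac{a+b}{2}Y$ in the rotated coordinates $X = x+y$, $Y = x-y$ introduced in Lemma~\ref{l:Hamilt-at-saddles}, since in these coordinates $H$ splits additively and the level-set structure is transparent. First I would use the explicit saddle and center locations (already computed in Lemma~\ref{l:Hamilt-at-saddles}: critical points occur where $\sin X = b-a$ and $\sin Y = -(a+b)$, classified as saddles or centers by the sign of $\cos X \cos Y$) to confirm that each fundamental cell $[-\frac\pi2+\pi k,\frac\pi2+\pi k]^2$ contains exactly one saddle and one center. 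The bound $a,b\le 0.1$ keeps $|b-a|,|a+b|<1$ so the arcsines are well-defined and the critical points stay in the interior of their cells, close to the nodes of $\Gamma$.

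Next I would establish the homoclinic loop. The value of $H$ at the saddle is given by~\eqref{e:HG}; the level set $\{H = H_{\text{saddle}}\}$ near the saddle consists of two crossing curves. I would argue that one pair of opposite local branches closes up into a loop contained in the cell, and the other pair escapes, by comparing $H$ along the cell boundary with $H_{\text{saddle}}$: on the edges of the cell $x = \pm\frac\pi2+\pi k$ or $y=\pm\frac\pi2+\pi l$ one has $\cos x\cos y = 0$, so $H = by-ax$ there, which is monotone along each edge. A direct estimate (using $a,b$ small, $K$ from~\eqref{e:K} of order $a,b$) shows the saddle value $H_{\text{saddle}} = by_\Gamma - ax_\Gamma \pm K$ is attained on the cell boundary in the ``correct'' two edges and not the other two — equivalently, the loop cannot cross the boundary, forcing it to close inside, while the free separatrices must leave the cell. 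This is essentially the combinatorics already made precise in the ``chess game'' discussion, so I would lean on that.

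Then, for the second paragraph (the condition $a\le 1.5b$), I would show the free stable separatrix hits the transversal $x = -\frac\pi2+\pi k$. The free separatrices leave the cell through the two boundary edges where $H_{\text{saddle}}$ is not attained in the interior of the edge; I need to check these are the left/right edges $x = \mp\frac\pi2+\pi k$ rather than top/bottom, and that the separatrix actually reaches that edge transversally (recall $\dot x = b > 0$ on $x=-\frac\pi2+\pi k$, giving transversality automatically). The inequality $a\le 1.5b$ is exactly what guarantees the correct edges are the vertical ones: it controls the slope $a/b$ of the lines $by-ax = \text{const}$ relative to the $45^\circ$ geometry of the cell corners, so the level curve through the saddle exits vertically. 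I would verify this by checking the sign of $\dot x$ and $\dot y$ (from~\eqref{e:fluid-ode}) along the relevant separatrix branches as they approach the boundary, using $a\le 1.5b$ to rule out the trajectory turning back before reaching $x=\pm\frac\pi2+\pi k$. Finally, the statement that $P_0$ is defined everywhere except at the two intersections of free stable separatrices with the transversal follows: every other point on the transversal lies on a trajectory that is either periodic (inside a homoclinic cell, returning to the transversal) or is an unbounded channel trajectory that, by the transversality $\dot x = b>0$, must cross the next transversal $x = \frac\pi2+\pi k$ in finite time; the only obstruction to return is landing on a stable separatrix and being absorbed into a saddle.

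The main obstacle I expect is the second paragraph: pinning down precisely which boundary edges the free separatrices exit through and showing they genuinely reach the vertical transversals (rather than, say, spiraling or exiting a horizontal edge), and isolating the exact role of $a\le 1.5b$ in this. The additive splitting of $H$ in $(X,Y)$ coordinates should make this a finite, if slightly delicate, case check: $H$ restricted to a horizontal or vertical line is a sum of a cosine and a linear term, so its level sets are controlled by elementary monotonicity, and the inequality $a\le 1.5b$ enters when comparing the linear drifts $b-a$ and $a+b$ in the $X$ and $Y$ directions. I would organize this as: (i) on each cell edge, solve $H = H_{\text{saddle}}$ and count solutions; (ii) show exactly two edges (the vertical ones, using $a\le 1.5b$) carry an interior solution corresponding to the free separatrices; (iii) conclude by transversality of the flow to the vertical lines.
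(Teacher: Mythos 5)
Your plan for the first paragraph of the lemma is a reasonable alternative to the paper's argument: the paper instead notes that the vertical lines $x=\tfrac\pi2+\pi k$ and horizontal lines $y=\tfrac\pi2+\pi l$ are one-way transversals ($\dot x = b>0$, $\dot y = a>0$ there), so closed orbits and homoclinic loops must live entirely inside a single cell; the loop enclosing the center exists because the maximal nest of periodic orbits around the center must be bounded by a critical level, hence by the saddle; and the ``no second loop'' claim follows from a Poincar\'e--Hopf index count rather than level-set bookkeeping. Your boundary-value comparison could likely be made to work, but you would be recreating a more computational version of the same topological facts.

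The real gap is in your treatment of the second paragraph, and it is in the role you assign to the condition $a\le 1.5 b$. You are implicitly assuming the free separatrix leaves the cell directly through a vertical edge, and you want $a\le 1.5 b$ to force that. But nothing prevents the free separatrix from first leaving through a horizontal edge $y=\pm\tfrac\pi2+\pi l$ (where $\dot y = a>0$) and traveling through many cells stacked in the vertical strip $-\tfrac\pi2+\pi k\le x\le\tfrac\pi2+\pi k$ before ever meeting a vertical transversal. Your ``check signs of $\dot x,\dot y$ as the separatrix approaches the boundary'' and ``solve $H = H_{\text{saddle}}$ on each cell edge'' are local, single-cell checks, and they do not rule out the genuinely global obstruction: the separatrix might stay inside the strip forever, e.g.\ by accumulating on the stable set of \emph{another} saddle in the same strip. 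This is exactly what the paper's proof addresses, and it is where $a\le 1.5 b$ actually enters: under this condition, using the explicit formula~\eqref{e:HG}, all saddles inside a fixed vertical strip have \emph{distinct} values of $H$. Combined with conservation of $H$ along the unperturbed flow, this means the free separatrix (which carries the original saddle's $H$-value) cannot limit to any other saddle; it cannot limit to a periodic orbit (its backward orbit approaches the original saddle); it cannot limit to a center (centers are surrounded by closed orbits); and it cannot close up into a new homoclinic loop (already excluded). The paper then confines the separatrix to a large rectangle $R$ whose top and bottom sides are cleared by an $H$-level estimate, and applies the Poincar\'e--Bendixson theorem to conclude the separatrix must exit $R$ through the vertical side. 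The same Poincar\'e--Bendixson argument is reused to show $P_0$ is defined off the two stable-separatrix intersection points. Your plan has no mechanism replacing this global argument, so as written it does not establish that the free separatrix reaches the transversal, and it misidentifies what $a\le 1.5 b$ is for.

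Concretely: insert a step computing and comparing $H$ at all saddles in the strip (this is where $a\le 1.5 b$ is used, via Lemma~\ref{l:Hamilt-at-saddles}), then trap the free separatrix in a tall rectangle $R$ bounded above and below by $H$-levels, and invoke Poincar\'e--Bendixson to exclude an $\omega$-limit set inside $R$, forcing exit through the vertical side. Your local edge-by-edge analysis alone will not close the argument.
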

\begin{proof}
  \;
  \begin{itemize}
    \item
    \emph{In each cell there are exactly two fixed points, one center and one saddle.}
    This is a straightforward computation that can be done as in the proof of Lemma~\ref{l:Hamilt-at-saddles}.

    \item \emph{One pair of separatrices of the saddle forms a homoclinic loop that lies inside the cell.}
    The cells we consider are the pieces $\mathbb R^2$ is divided into by a family of vertical lines and a family of horizontal lines.
    Restricted to the vertical lines $x = \frac \pi 2 + \pi k$, the horizontal component of ${\bf v}$ on these lines is equal to $b$ by~\eqref{e:fluid-ode}.
    Vector field $ {\bf v} $ is transversal to the
    vertical lines $x = \frac \pi 2 + \pi k$ since
    $ \dot x = b>0 $ there.
    Hence, these lines are transversal to ${\bf v}$, and all trajectories cross them all in one direction, from left to right. A similar statement holds for the horizontal lines $y = \frac \pi 2 + \pi k$ where   $\dot y =a$. As each of these lines can only be crossed in one direction, periodic trajectories and homoclinic loops are only possible inside the cells.

    The center is surrounded by periodic trajectories; the boundary of the domain formed by these trajectories must contain a critical point of $H$. This critical point is a fixed point of ${\bf v}$, so it must be the saddle. This means that the boundary of this domain is a homoclinic loop.

    \item \emph{The other two pairs of separatrices does not form   loops.}

    If two other separatrices form another loop, one can find two periodic orbits near the two loops such that domains bounded by them are disjoint and do not contain the saddle. It is proved by considering two cases: if the interiors of the loops are disjoint and if the interior of one loop contains the other loop. In each domain bounded by this two periodic orbits there is a fixed point by \PH index theorem, which is a contradiction: there are only two fixed points, and the saddle is outside these domains by construction.

    \item Suppose that $a \le 1.5b$. Then, \emph{in any vertical strip $-\frac \pi 2 + \pi k \le x \le \frac \pi 2 + \pi k$, $k \in \mathbb Z$, all saddles have different values of the Hamiltonian $H$.}
    We can assume WLOG that $k=0$, and the strip is $-\frac \pi 2 \le x \le \frac \pi 2$. Denote this strip by $\Pi$.
    A straightforward computation shows that saddles in $\Pi$ come in horizontal pairs, and different pairs are shifted vertically by $2 \pi l$. These saddles are near the grid nodes $(-\frac \pi 2, \frac \pi 2 + 2 \pi l)$ and $(\frac \pi 2, \frac \pi 2 + 2 \pi l)$, where $l \in \mathbb Z$.

    By Lemma~\ref{l:Hamilt-at-saddles}, the values of $H$ at these saddles are $\frac \pi 2 (4b l + b \pm a) \mp K$ with "$-a$" for the left saddle in each pair and "$+a$" for the right one.  As $K$ is given by~\eqref{e:K} and the function $f$ there satisfies $f(0)=1$ and $|f'|=|\arcsin x| < 0.3$ on $[-0.2, 0.2]$, we have $|K| < 0.3a$.
    Hence, the values of $H$ at saddles with different $l$
    differ by at least $\frac \pi 2 (4b-2a) - 2|K|$, which is positive as
    \[
      \frac \pi 2 (4b-2a) > 4b-2a \ge 4b - 2\cdot 1.5 b \ge b \ge 0.6a  > 2|K|.
    \]
    The values of $H$ at two saddles with the same $l$ differ by at least $\frac \pi 2(2a)-2|K| > 0$.

    \item Suppose that $a \le 1.5b$. Then, \emph{free stable separatrix of the saddle intersects the transversal $x = - \frac \pi 2 + \pi k$ and free unstable separatrix intersects the transversal $x = \frac \pi 2 + \pi k$.}

    Consider the vertical strip $-\frac \pi 2 + \pi k \le x \le \frac \pi 2 + \pi k$ containing the saddle and a rectangle $R$ given by $-\frac \pi 2 + \pi k \le x \le \frac \pi 2 + \pi k$ and $|y| \le M$, where the number $M$ is so large that $H$ restricted to the top side of $R$ is greater than at the saddle, and $H$ restricted to the bottom side of $R$ is less than at the saddle. As ${\bf v}$
    goes right at the left and right sides of $R$, the free unstable separatrix can leave $R$ only via its right side $x = \frac \pi 2 + \pi k$, and the free stable separatrix can leave $R$ only via its left side $x = -\frac \pi 2 + \pi k$.

    It remains to prove that free separatrices leave $R$, for definiteness let us prove that the free unstable separatrix cannot stay in $R$. Let $p$ be any point on this separatrix inside $R$ and let $\omega(p)$ be its $\omega$-limit set. If the separatrix stays in $R$, we have $\omega(p) \subset R$. By \PB theorem, $\omega(p)$ can be either (1) a fixed point, (2) a periodic orbit, or (3) a union of fixed points and homoclinic or heteroclinic connections between them. As the sources are surrounded by periodic orbits, they cannot belong to $\omega(p)$. The only saddle that can belong to $\omega(p)$ is the "original" saddle whose separatrix we consider, as at other saddles the value of $H$ is separated from the value of $H$ on the orbit of $p$. However, bacause $H$ is preserved at the orbit of $p$, it can only return near the original saddle along a separatrix. This would imply a homoclinic loop, which is not the case as the separatrix we consider is free. Finally, if $\omega(p)$ is a periodic orbit, the backward trajectory of $p$ stays at these periodic orbit, while the backward trajectory of $p$ actually approaches the saddle as $p$ is on an unstable separatrix. This exhausts all possibilities. The resulting contradiction shows that the free unstable separatrix leaves $R$ and so intersects the transversal $x = \frac \pi 2 + \pi k$.

    \item  \emph{The \poincare map $P_0$ on the torus (Section~\ref{s:poinc}) is defined at all points on the transversal other than the two points where stable separatrices of the two saddles intersect the transversal. }
    Consider the \poincare map $\tilde P_0$ on $\mathbb R^2$ from the transversal $x = -\frac \pi 2$ to the transversal $x = \frac \pi 2$. We will prove that it is defined at all points other than the intersections of stable separatrices of the saddles in the strip $\Pi$ given by $- \frac \pi 2 \le x \le \frac \pi 2$ with the transversal $x = -\frac \pi 2$. This will imply the statement about $P_0$ as $\tilde P_0$ projects to $P_0$.

    Consider a point $p$ on the transversal $x = -\frac \pi 2$ and suppose its orbit stays in the strip $\Pi$.
    Let us use \PB theorem as above to get a contradiction. The set $\omega(p)$ cannot be a periodic orbit as the orbit of $p$ cannot return to the transversal $x = -\frac \pi 2$. It cannot contain a source as those are surrounded by periodic orbits. Finally, it cannot contain a saddle as $H(p)$ is different from the values of $H$ at the saddle. Hence, the \poincare map is defined at $p$.
  \end{itemize}

\end{proof}

The following lemma will be used below to prove Lemma~\ref{l:plateau-heights}.
\begin{lemma} \label{l:plateau-heights-hamiltonian}
Suppose $a, b \in (0, \; 0.1]$ with $a \le 1.5b$.
Let $y_1(a, b)$ and $y_2(a, b)$ be the $y$-coordinates of the intersections of the stable manifolds of the left and the right saddle, respectively, with the transversal $x = - \frac \pi 2$.
Then,
\[
 \frac{\partial y_j}{\partial a} < -1, \qquad j = 1, 2.
\]
\end{lemma}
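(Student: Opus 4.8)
The plan is to derive an explicit formula for each $y_j(a,b)$ and then differentiate. Two observations drive it. First, on the transversal $x=-\frac\pi2$ we have $\cos x=0$, so the Hamiltonian~\eqref{e:fluid} reduces there to the affine function $H=\frac{\pi a}{2}+by$. Second, a free stable separatrix of a saddle $(x_*,y_*)$ lies on the level set $\{H=H(x_*,y_*)\}$; hence if it crosses $x=-\frac\pi2$ at $\bigl(-\frac\pi2,y_j\bigr)$ then $\frac{\pi a}{2}+by_j=H(x_*,y_*)$, i.e.
\[
  y_j=\frac1b\Bigl(H(x_*,y_*)-\frac{\pi a}{2}\Bigr).
\]

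Next I would pin down the two saddles involved and evaluate $H$ at each. By Lemma~\ref{l:ham-sep} and the computation of the saddle locations in the proof of Lemma~\ref{l:Hamilt-at-saddles}, the two saddles whose free stable separatrices meet $x=-\frac\pi2$ are the saddles lying in the strip $-\frac\pi2\le x\le\frac\pi2$: one near the grid node $\bigl(-\frac\pi2,\frac\pi2+2\pi l_1\bigr)$, which is \emph{odd}, and one near $\bigl(\frac\pi2,\frac\pi2+2\pi l_2\bigr)$, which is \emph{even}. Feeding these nodes into Lemma~\ref{l:Hamilt-at-saddles} gives $H=b\bigl(\frac\pi2+2\pi l_1\bigr)+\frac{\pi a}{2}+K$ at the first saddle and $H=b\bigl(\frac\pi2+2\pi l_2\bigr)-\frac{\pi a}{2}-K$ at the second, where $K=\frac12\bigl(f(b-a)-f(b+a)\bigr)$ with $f(x)=\sqrt{1-x^2}+x\arcsin x$. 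Substituting into the displayed identity (on the odd saddle the $\frac{\pi a}{2}$ from the transversal cancels the $+\frac{\pi a}{2}$ in $H$, while on the even saddle it combines with the $-\frac{\pi a}{2}$) yields
\[
  y_1=\frac\pi2+2\pi l_1+\frac{K}{b},\qquad
  y_2=\frac\pi2+2\pi l_2-\frac{\pi a}{b}-\frac{K}{b}.
\]

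It then remains only to differentiate in $a$. A one-line computation gives $f'(x)=\arcsin x$, hence $\frac{\partial K}{\partial a}=-\frac12\bigl(\arcsin(b-a)+\arcsin(b+a)\bigr)$, so that
\[
  \frac{\partial y_1}{\partial a}=-\frac{1}{2b}\bigl(\arcsin(b-a)+\arcsin(b+a)\bigr),\qquad
  \frac{\partial y_2}{\partial a}=-\frac\pi b+\frac{1}{2b}\bigl(\arcsin(b-a)+\arcsin(b+a)\bigr).
\]
For $y_2$ the bound $\frac{\partial y_2}{\partial a}<-1$ is immediate: each $\arcsin(b\pm a)<\frac\pi2$ and $b\le0.1$, so $\frac{\partial y_2}{\partial a}<-\frac\pi b+\frac{\pi}{2b}=-\frac{\pi}{2b}<-1$. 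For $y_1$ the bound $\frac{\partial y_1}{\partial a}<-1$ is exactly the elementary inequality $\arcsin(b-a)+\arcsin(b+a)>2b$, which I would prove by observing that $a\mapsto\arcsin(b-a)+\arcsin(b+a)$ is non-decreasing on $[0,\infty)$ — its derivative $\frac{1}{\sqrt{1-(b+a)^2}}-\frac{1}{\sqrt{1-(b-a)^2}}$ is $\ge0$ since $(b+a)^2\ge(b-a)^2$ — so it is at least its value $2\arcsin b$ at $a=0$, and $\arcsin b>b$.

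I do not expect a serious obstacle: the argument is bookkeeping on top of Lemmas~\ref{l:Hamilt-at-saddles} and~\ref{l:ham-sep}, together with that one-line inequality. The only point that needs genuine care is the identification of the two relevant saddles and, crucially, their parities — the sign convention in Lemma~\ref{l:Hamilt-at-saddles} replaces $K$ by $-K$ for the even saddle, and mislabeling the parity of the left saddle would flip the sign of $\frac{\partial y_1}{\partial a}$ — so I would verify directly from the explicit saddle coordinates in the proof of Lemma~\ref{l:Hamilt-at-saddles} that the saddle near $\bigl(-\frac\pi2,\cdot\bigr)$ is odd and the one near $\bigl(\frac\pi2,\cdot\bigr)$ is even.
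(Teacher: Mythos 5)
Your proof is correct and takes essentially the same route as the paper's: both reduce to the formula $H = by + a\pi/2$ on the transversal, invoke Lemma~\ref{l:Hamilt-at-saddles} for $H$ at the saddles, compute $\partial K/\partial a = -\mu$ with $\mu = \tfrac12(\arcsin(b+a)+\arcsin(b-a))$, and finish with the elementary inequality $\mu > b$. Your parity identification (left saddle near $(-\tfrac\pi2,\cdot)$ is odd, right near $(\tfrac\pi2,\cdot)$ is even) is correct and matches what the paper's proof implicitly uses, and your proof of $\mu>b$ via monotonicity of $a\mapsto\arcsin(b-a)+\arcsin(b+a)$ is just a cosmetic variant of the paper's $(\arcsin)'\ge 1$ argument.
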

\begin{proof}
 The values of the Hamiltonian $H$ at $(-\pi/2, y_1)$ and $(-\pi/2, y_2)$ are the same as at the corresponding saddles. Denote these values by $H_1$ and $H_2$, respectively.
 Since $x=-\pi/2$ on the transversal, we have $H_i = b y_i + a \pi / 2$, and
 \[
 b \frac{\partial y_i}{\partial a} = \frac{\partial H_i}{\partial a} - \frac \pi 2.
 \]
 Using the formula for $H_i$ from Lemma~\ref{l:Hamilt-at-saddles}, we get
 \[
 \frac{\partial H_1}{\partial a} = \frac \pi 2 - \mu,
 \qquad
 \frac{\partial H_2}{\partial a} = -\frac \pi 2 + \mu,
 \qquad \text{where }
 \mu = \frac 1 2 \big( \arcsin(b+a) + \arcsin(b-a) \big).
 \]
 Hence,
 \[
 b\frac{\partial y_1}{\partial a} = - \mu,
 \qquad
 b\frac{\partial y_2}{\partial a} = -\pi  + \mu.
 \]
 As $|\mu| \le \frac \pi 2$, we we have $\frac{\partial y_2}{\partial a} \le \frac{\partial y_1}{\partial a} = -\mu/b$.
 As $(\arcsin x)' \ge 1$, we have
 \[
 \arcsin(b+a) + \arcsin(b-a) = \arcsin(b+a) - \arcsin(a-b) > 2b,
 \]
 and $\mu > b$.
 Thus, $\frac{\partial y_2}{\partial a} \le \frac{\partial y_1}{\partial a} < -1$.
\end{proof}

\section{Reduction to a torus flow - details} \label{s:Fenichel-details}
\begin{proof}[Sketch of a proof of Proposition~\ref{p:fenichel}]
This sketch follows the book~\cite{jones1995geometric}, and refers to theorems by Fenichel~\cite{fenichel} as stated in this book.
For clarity, we will first prove a weaker version where the required smallness of $\eps$ can depend on $a$ and $b$ and without checking that ${\bf f}$ smoothly depends on $a$ and $b$.

Rescaling the time by setting $t=\eps\tau$ we turn~\eqref{e:MR-x-y} into
\begin{equation}\label{e:flowtau}
{\bf x'} = \eps {\bf y}, \qquad {\bf y}' = - ( {\bf y} - {\bf v} ( {\bf x} ) ),  \ \ ^\prime = \frac{d}{d\tau} .
\end{equation}
Now for $\eps=0$ the manifold $\mathcal M_0$ defined by
${\bf y}={\bf v}({\bf x})$ consists of critical points. It is normally hyperbolic center manifold with $2$-dimensional contracting leaves.  According to Fenichel's Theorem 1, for sufficiently small $\eps$  there exists  an invariant manifold $\mathcal M_\eps$  that is smooth (including in $\eps$), and   $C^r$-close to $\mathcal M_0$ (where $r$ depends on the smallness of $\eps$).
The local stable manifold $W^s(\mathcal M_\eps)$ of $\mathcal M_\eps$ is $4$-dimensional, so it contains a neighborhood of $\mathcal M_\eps$. By Fenichel's Theorem 3, $W^s(\mathcal M_\eps)$ is foliated by stable manifolds of different points of $\mathcal M_\eps$, those are two-dimensional manifolds, and the exponential convergence holds.
Now, note that for $\eps=0$ the manifold $\mathcal M_0$ is globally attracting, this means that the global stable manifold $W^s(\mathcal M_\eps)$ is $\mathbb R^4$, and all $\mathbb R^4$ is foliated by two-dimensional stable fibers.
Finally, to get the dynamics on $\mathcal M_\eps$, let us write
$\mathcal M_\eps$ as a graph of a function
\begin{equation} \label{e:inv-manifold}
    {\bf y} = {\bf v}({\bf x}) + \eps {\bf f} ({\bf x}, \eps).
\end{equation}
The expression for $f$ will come out of the condition that this graph is invariant under the flow \ref{e:flowtau}. Thus differentiating \ref{e:inv-manifold} and using \ref{e:flowtau} we obtain
\[
    -(  \underbrace{{\bf y}-{\bf v}({\bf x})}_{\eps {\bf f} } )
    =
    \Big(
      \frac{\partial {\bf v}}{\partial {\bf x}}
      +
      O(\eps)
    \Big){\bf x}^\prime
    =
    \eps \frac{\partial {\bf v}}{\partial {\bf x}}{\bf v}({\bf x}) + O(\eps^2),
\]
yielding
\[
    {\bf f} = -\frac{\partial {\bf v}}{\partial {\bf x}} {\bf v} + O(\eps).
\]

To show that smallness of $\eps$ can be taken uniformly over all $a$ and $b$ with $|a|, |b| < M$, one can consider a six-dimensional system treating $a$ and $b$ as slow variables with $\dot a = 0$ and $\dot b = 0$, then obtain a four-dimensional slow mainfold for this system when $\eps$ is small enough, and then divide it into two-dimensional manifolds for different values of $a$, $b$.
As the four-dimensional slow mainfold is $C^r$, this also implies that ${\bf f}$ smoothly depends on $a$ and $b$.
\end{proof}

\textbf{Explicit formula for the perturbation.} In the appendices below we study the perturbed system~\eqref{e:perturbed}, which requires a more explicit formula for the perturbation ${\bf f}$.
The general formula
${\bf f} = - \frac{\partial {\bf v}}{\partial {\bf x}} {\bf v} + O(\eps)$
applied to the vector field~\eqref{e:fluid} yields
\begin{equation} \label{e:f}
 {\bf f} = \Big(
 \frac 1 2 \sin2x + a \cos x \cos y - b \sin x \sin y,
 \
 \frac 1 2 \sin2y + a \sin x \sin y - b \cos x \cos y
 \Big) + O(\eps).
\end{equation}
We will also often use a formula for the divergence of this vector field:
\begin{equation} \label{e:div}
\Div {\bf f} = \cos 2x + \cos 2y + O(\eps) = 2\cos(x+y)\cos(y-x) + O(\eps).
\end{equation}
Notably, the divergence does not depend on the parameters $a$ and $b$.

\section{Properties of the rotation number} \label{s:rotation_number}
In this appendix, we prove the statement of Remark~\ref{r:rho-rho} and Lemma~\ref{l:rot-numbers}.
\begin{proof}[Proof of Lemma~\ref{r:rho-rho}]
  Recall that we are assuming that the initial point $\tilde z_0$ has infinite future orbit.
  A standard fact on monotone degree one circle maps (possibly undefined at some points like the map $P$) is that there is $\rho \in \mathbb R$ such that the sequence $\tilde P^n(\tilde z_0) - \rho n$ is bounded.
  Indeed, set $a_n = \tilde P^n(\tilde z_0) - \tilde z_0$.
  Then $a_m + a_n - 1 < a_{n+m} < a_m + a_n - 1$ for any $m, n \in \mathbb N$: $a_{n+m} = a_n + \big(\tilde P^{m+n}(\tilde z_0) - \tilde P^n(\tilde z_0)\big)$, and the second term differs from $a_m$ by less than one due to monotonicity and degree one. This inequality implies that there is $\rho$ such that $\rho n - 1 < a_n < \rho n + 1$ by~\cite[Exercise~99]{polya2012problems}.

  The fact above implies that $\rho$ exists, and $\tilde P^n(\tilde z_0) = \rho n + O(1)$.
  Let us enumerate all intersections of the positive orbit ${\bf x}(t)$ of ${\bf x}_0$ with the lift of the transversal $\tilde T$ as ${\bf x_n}$, treating ${\bf x}_0$ as the zeroth intersection.
  Then $z({\bf x}_n) = \rho n + O(1)$ and $x({\bf x}_n) = \pi n + O(1)$.
  So,
  $y({\bf x}_n) = 2\pi z({\bf x}_n) + x({\bf x}_n) = (2\rho + 1)\pi n + O(1)$.
  Hence, the intersections of ${\bf x}(t)$ with $\tilde T$  are at a finite distance from the ray starting at ${\bf x}_0$ with the slope $2\rho+1$.
  Finally, let $\gamma$ be the piece of ${\bf x}(t)$ between ${\bf x}_0$ and ${\bf x}_1$, let $R$ be the curvilinear rectangle form by $\gamma$, $\gamma$ shifted up by $2\pi$, and vertical segments connecting the endpoints of these two curves. Then, each fragment of ${\bf x}(t)$ between two consecutive intersections with $\tilde T$ can be covered by a shift~\eqref{e:shifts} of $R$ and so has a bounded diameter. Hence, the whole ${\bf x}(t)$ lies within bounded distance from the ray above.
\end{proof}

\begin{proof}[Proof of Lemma~\ref{l:rot-numbers}]
The first two properties can be proved using the standard argument for the case of circle homeomorphisms (as written in~\cite[Appendix, Lemma~3]{palis2012geometric}) with minor modifications.
Let us now prove Property~3. Clearly, $\rho(s)$ is nondecreasing. The claim that $\rho(s)$ cannot be constant and irrational on any interval follows from Corollary~\ref{c:dim-H-key}.

The following auxiliary statement is needed to prove Property~4. 
\begin{lemma}
  $\rho(s) = \frac p q \in \mathbb Q$ if and only if for some flat spot $I_j$ we have $\tilde f^q(I_j) \in I_j + p$.
\end{lemma}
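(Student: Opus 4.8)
The plan is to prove both directions separately, using the characterization of rotation number via orbits together with the structure of the flat spots. For the "if" direction, suppose $\tilde f_s^q(\tilde I_j) \subset \tilde I_j + p$ for some flat spot $I_j$ (here I identify $\tilde I_j$ with, say, its right endpoint, or with any point of the closed arc — since $f_s$ collapses $I_j$ to a point this does not matter). Then the point $x^* = \tilde b_j(s) = \tilde f_s(\tilde I_j)$ satisfies $\tilde f_s^{q-1}(x^*) \in \tilde I_j + p$, hence $\tilde f_s^q(x^*) = \tilde b_j(s) + p = x^* + p$. So $x^*$ projects to a periodic point of $f_s$ of period dividing $q$, and the standard computation gives $\rho(s) = \lim_n \tilde f_s^n(x^*)/n = p/q$ (the rotation number being well-defined and independent of the base point by Lemma~\ref{l:rot-numbers}). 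First I would write this out carefully, noting that the collapsing property of flat spots is exactly what lets a point that merely \emph{lands in} $\tilde I_j + p$ become genuinely periodic.

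For the "only if" direction, assume $\rho(s) = p/q$ in lowest terms. The classical fact (valid here since the rotation-number machinery of Lemma~\ref{l:rot-numbers} carries over from homeomorphisms) is that a rational rotation number forces the existence of a periodic orbit; so there is $\tilde x_0$ with $\tilde f_s^q(\tilde x_0) = \tilde x_0 + p$. The task is to show that some periodic orbit actually meets a flat spot. Here is where I expect the main obstacle: a priori the periodic orbit could live entirely in the expanding region, and an expanding map can certainly have periodic points. The resolution must use that $f_s$ is \emph{not injective} — it is constant on each $I_j$ — so it cannot be a homeomorphism of $S^1$, and a monotone degree-one circle map with plateaus that has a periodic orbit avoiding all plateaus would have to be expanding along that whole orbit, which is incompatible with the orbit being periodic (an expanding map has no periodic orbit confined to its expanding locus, since $\lambda$-expansion forces $(\tilde f_s^q)'(\tilde x_0) > \lambda^q > 1$, yet a periodic point of a monotone map is semi-stable and its return map cannot have derivative $>1$ on \emph{both} sides... ) — more precisely, I would argue by a length/Poincaré-recurrence count: iterate the closed arc between two consecutive points of the periodic orbit; if no iterate ever hits a flat spot, expansivity multiplies its length by $\lambda^q > 1$ after one period, but it must return to itself, a contradiction. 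Hence some iterate of this arc, and therefore (pushing to the endpoint and using monotonicity) some point of the orbit, meets a flat spot $I_j$; then $\tilde f_s^q$ maps $\tilde I_j$ into the periodic orbit, and since $\tilde f_s$ collapses $I_j$, we get $\tilde f_s^q(\tilde I_j) = \tilde x_0' + p' \in \tilde I_j + p$ after adjusting the lift/exponent to account for the period dividing $q$.

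The delicate bookkeeping I would be careful about: (i) the period of the periodic orbit divides $q$ but need not equal $q$ — however since $p/q$ is in lowest terms the combinatorial period equals $q$ for monotone circle maps, so $\tilde f_s^q(\tilde I_j) \subset \tilde I_j + p$ is the right normalization; (ii) translating "the arc's iterate hits a flat spot" into "a point of the periodic orbit hits a flat spot" uses that the endpoints of the arc are themselves on the orbit and that $\tilde f_s$ is monotone, so the image arc has the images of those endpoints as its own endpoints; (iii) flat spots are closed arcs, so "hits" can be taken to mean non-empty intersection, and collapsing still applies on the closed arc. The main obstacle, as noted, is ruling out an orbit hiding in the expanding part, and the clean way to do it is the expansion-versus-recurrence length argument rather than any derivative-at-a-fixed-point computation.
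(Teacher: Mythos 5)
The ``if'' direction is fine and matches the paper: the point $\tilde b_j$ is $q$-periodic with translation $p$, hence $\rho=p/q$.

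The ``only if'' direction has a genuine gap. You fix an arbitrary lift $\tilde x_0$ with $\tilde f_s^q(\tilde x_0)=\tilde x_0+p$ and then try to show \emph{that} orbit meets a flat spot. Your preliminary remark --- that a monotone degree-one map with plateaus cannot have a periodic orbit confined to the expanding locus --- is false: an expanding map certainly can have (repelling) periodic orbits, and in general $\tilde g=\tilde f_s^q-p$ has fixed points both at the expanding ``upward crossings'' of the diagonal and at ``downward crossings'' inside plateaus. So the particular periodic orbit you grabbed need not touch any $I_j$. The length argument does not repair this: showing that some iterate $f_s^i(A)$ of the inter-orbit arc $A$ \emph{intersects} a flat spot is nearly vacuous (flat spots have positive length, and the arcs between consecutive orbit points cover all of $S^1$ except the finite orbit), and it does not force an orbit \emph{endpoint} into a flat spot --- $I_j$ can sit strictly in the interior of $f_s^i(A)$, leaving the periodic points at the boundary untouched. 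The phrase ``pushing to the endpoint and using monotonicity'' is doing no work here.

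What the paper does instead is not ``find the orbit, then check it meets a flat spot'' but ``find the fixed point inside a flat spot in the first place.'' Consider $\tilde g=\tilde f_s^q-p$, which has its own plateaus (preimages under partial iterates of the $I_j$) and is $\lambda^q$-expanding off them. Since $\rho(\tilde g)=0$, the periodic function $\tilde g(x)-x$ vanishes somewhere; at an expanding zero it crosses from below to above, so by periodicity there must be a crossing from above to below, and such a crossing is impossible where $\tilde g'>\lambda^q>1$, hence occurs in (the closure of) a plateau of $\tilde g$. This yields a fixed point $x_0$ of $\tilde g$ with $\tilde f_s^i(x_0)\in I_j$ for some $i<q$ and some $j$, and then $\tilde f_s^q(I_j)=\tilde f_s^i(x_0)+p\in I_j+p$. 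You should replace the ``any periodic orbit meets a flat spot'' step by this argument (or by a careful recursive version of the length argument applied to the sub-arcs on each side of the flat spot, which is considerably messier).
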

\begin{proof}
  The "if" part is clear: the point $f^q(I_j)$ is periodic, so that for this point the rotation number is $p/q$. To prove the ``only if'' part, consider the map $\tilde g(x) = \tilde f^q(x) - p$. As $\rho(\tilde f) = p/q$, we have $\rho(\tilde g) = 0$. Thus, $\tilde g$ has fixed points -- otherwise, we would have either $\tilde g(x) > x$ for all $x$ or $\tilde g(x) < x$, which would imply $\rho(\tilde g) > 0$ or $\rho(\tilde g) < 0$, respectively. In other words, the graph of $y=\tilde g(x)$ intersects the line $y=x$. The map $\tilde g$ is a map with flat spots expanding outside them, as follows from the chain rule. It is impossible that all intersections of its graph with $y=x$ are at the expanding parts (then the graph goes from below $y=x$ to above $y=x$ and cannot go back);  flat spots therefore intersect $y=x$. This gives a periodic orbit of $f$ with the period $q$, and we can take
  any flat spot of $f$ this orbit intersects as $I_j$.
\end{proof}

\noindent Now we are ready to prove Property~4. The set $\{\rho^{-1}(p/q)\}$ is closed, non-empty, and connected by monotonicity and continuity of $\rho$. So, it can be either a closed interval, as claimed in Property~4, or a point. Let us show that this set is not just one point. Suppose $\rho(s_0) = p/q$; then for some flat spot $I_j$ we have $\tilde f_{s_0}^q(I_j) \in I_j + p$. If $\tilde f_{s_0}^q(I_j)$ is not the right endpoint of $I_j$, we will have $\tilde f_{s_1}^q(I_j) \in I_j + p$ for some $s_1>s_0$ near  $s_0$. Otherwise, $\tilde f_{s_0}^q(I_j)$ is not the left endpoint, and this holds for $s_1$ close to $s_0$ and smaller than $s_0$.
\end{proof}

\section{Perturbed flow} \label{a:perturbed}
In this appendix, we establish several statements on the perturbed flow $\eqref{e:perturbed}$ used above: the first part of Proposition~\ref{p:circle-maps-family} and Lemmas~\ref{l:contract},~\ref{l:plateau-heights}, and~\ref{l:monotone}.
For brevity, we will refer to the two branches of the invariant manifold of a saddle as \emph{separatrices} throughout this appendix. Accordingly, in our notation, each saddle has two stable separatrices and two unstable ones.

\textbf{The following quantifiers apply to all statements in this appendix}, and will not be stated separately:
\begin{itemize}
    \item there exists $\gamma_0 > 0$ such that for any $a_0,~b_0 \in (0, \gamma_0)$ there exists $\eps_1 > 0$ depending on $a$ and $b$ such that the statement holds for any $a$, $b$, $\eps$ with $0 < \eps < \eps_1$ and $|a-a_0|, |b-b_0| < \eps_1$.
\end{itemize}
A compactness argument shows that $\eps_1$ can be taken uniformly if $a, b$ are in a fixed compact subinterval of $(0, \gamma_0)$, such as $[\delta, \gamma_0-\delta]$ for any $\delta$.
So, Proposition~\ref{p:circle-maps-family} and Lemmas~\ref{l:contract},~\ref{l:plateau-heights}, and~\ref{l:monotone} as stated in the main part of the paper follow from the same statements proved in the common assumptions of this Appendix.

Typically, it suffices to take $\gamma_0=0.1$, thus implying $0 < a, b < 0.1$ after taking small enough $\eps_1$.
We will implicitly assume $\gamma_0 = 0.1$ throughout the Appendix. However, sometimes we are only able to prove that $\gamma_0$ exists without an exlicit estimate, in which case we will write ``for small enough $a$ and $b$''.

\subsection{Repulsion from the cell centers} \label{s:repel}
Roughly speaking, trajectories of the perturbed system~\eqref{e:perturbed} starting inside a homoclinic loop of the unperturbed system~\eqref{e:fluid-ode} spiral away from a fixed point near the center of the loop towards the neighborhood of the heteroclinic loop of the unperturbed system.
In this subappendix, we state a lemma that formalizes this intuition and then derive several corollaries used later to describe the phase portrait of the perturbed system; we also  prove the first part of Proposition~\ref{p:circle-maps-family}.
\begin{lemma} \label{l:repel}
There exists a positive $\delta = \delta(a, b)$ such that the following holds.
Let $\gamma$ be a closed trajectory of the Hamiltonian system~\eqref{e:fluid-ode} or its homoclinic loop. Then,
\begin{equation}
  \iint_{\Int \gamma} \Div {\bf f} \;dxdy > \delta A(\gamma),
\end{equation}
where $A(\gamma)$ is the area bounded by $\gamma$.
\end{lemma}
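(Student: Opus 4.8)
First I would peel off the $\eps$-dependence. By~\eqref{e:div}, $\Div{\bf f}=\cos 2x+\cos 2y+O(\eps)$ with the $O(\eps)$ bounded uniformly on the torus, so
\[
  \Big|\iint_{\Int\gamma}\Div{\bf f}\,dx\,dy-\iint_{\Int\gamma}(\cos 2x+\cos 2y)\,dx\,dy\Big|\le C\eps\,A(\gamma).
\]
By Lemma~\ref{l:ham-sep} every closed orbit and every homoclinic loop of~\eqref{e:fluid-ode} lies inside one of the square cells $[-\tfrac\pi2+\pi k,\tfrac\pi2+\pi k]\times[-\tfrac\pi2+\pi l,\tfrac\pi2+\pi l]$, so $A(\gamma)<\pi^2$; hence it suffices to prove the purely Hamiltonian estimate $\iint_{\Int\gamma}(\cos 2x+\cos 2y)\,dx\,dy\ge 2\delta_0\,A(\gamma)$ with $\delta_0=\delta_0(a,b)>0$, and then shrink $\eps_0$ so that $C\eps_0<\delta_0$. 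Since $\cos 2x+\cos 2y$ is $\pi$-periodic in each variable and~\eqref{e:fluid-ode} is invariant under the translations by $(2\pi,0),(0,2\pi),(\pi,\pi)$, the cells split into two translation classes, so it is enough to treat $\gamma$ inside one fixed cell $Q$ in each class.

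\textbf{A one-parameter family.}
Inside $Q$ the closed orbits are the level curves $\gamma_c=\{(x,y)\in Q:H(x,y)=c\}$ for $c$ between the saddle value $c_s$ (from Lemma~\ref{l:Hamilt-at-saddles}) and the center value $c_0$; they are nested around the center and degenerate to the homoclinic loop as $c\to c_s^+$. Put $A(c)=$ the area of $\{H>c\}\cap Q$ and $I(c)=\iint_{\{H>c\}\cap Q}(\cos 2x+\cos 2y)\,dx\,dy$. Using the coarea formula together with $ds=|\nabla H|\,dt$ along the Hamiltonian flow (so $ds/|\nabla H|=dt$), one gets $A'(c)=-T(c)$ ($T$ the period) and
\[
  I'(c)=-\oint_{\gamma_c}(\cos 2x+\cos 2y)\,dt=:-G(c),
\]
hence, since $I(c_0)=0$, one has $I(c)=\int_c^{c_0}G(u)\,du$. (Equivalently, $I(c)=-M(c)$ where $M(c)=\oint_{\gamma_c}{\bf f}\cdot\nabla H\,dt$ is the Melnikov function recording the first-order change of $H$ along the perturbed flow; $I>0$ is exactly the statement that $H$ decreases, i.e. repulsion from the center, which is a local maximum of $H$.) The ratio $I(c)/A(c)$ extends continuously to the compact interval $[c_s,c_0]$: its value at $c_0$ is $(\cos 2x+\cos 2y)$ evaluated at the center $=2\det H''>0$ there, and at $c_s$ it equals $I(\mathrm{loop})/A(\mathrm{loop})$. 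So by compactness the desired inequality follows, with $\delta_0=\tfrac12\inf_{[c_s,c_0]}I/A$, once we know that $I(c)>0$ for all $c\in[c_s,c_0)$.

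\textbf{Positivity, and where the work is.}
For $c$ near $c_0$ the orbit $\gamma_c$ hugs the center, the integrand $G(c)\approx 2T(c)>0$, and $I(c)=\int_c^{c_0}G>0$ at once. The difficulty is entirely at the other end: for $c$ near $c_s$ the orbit spends almost all of its time near the saddle, where $\cos 2x+\cos 2y=2\det H''<0$, so $G(c)<0$ there (in fact $G(c)\to-\infty$ as $c\to c_s^+$) and a termwise sign argument is impossible. Here I would prove positivity of $I$ at the homoclinic loop \emph{directly}, using $\iint_Q(\cos 2x+\cos 2y)\,dx\,dy=0$ (a one-line computation, since $\int_{-\pi/2}^{\pi/2}\cos 2x\,dx=0$) to write $I(c_s)=-\iint_{Q\setminus\Int\gamma_{c_s}}(\cos 2x+\cos 2y)\,dx\,dy$, and then estimating this ``outer crescent'' integral: on $\partial Q$ one has $\cos 2x+\cos 2y\le 0$, vanishing only at the four edge midpoints, and the crescent is a thin neighborhood of part of $\partial Q$, so its integral is strictly negative; the quantitative estimate is most conveniently carried out in the coordinates $X=x+y,\ Y=x-y$, in which the flow becomes $\dot X=\sin Y+(a+b),\ \dot Y=-\sin X+(b-a)$, the cell becomes a square, and the integrand becomes $2\cos X\cos Y$. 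For the intermediate values of $c$ I would bridge the two ends either by a perturbation argument from the exactly solvable case $a=b=0$ (where the same identities, plus a single sign change of $G$, already give $I>0$ on the open level interval, the endpoint $c=c_s$ being precisely the $O(a+b)$ effect handled above), or by showing that $G$ has at most one sign change on $(c_s,c_0)$, so that $c\mapsto I(c)=\int_c^{c_0}G$ increases and then decreases and its minimum over $[c_s,c_0]$ sits at an endpoint, where both $I(c_s)>0$ and $I(c_0)=0$ with $I/A\to2$ are under control. The genuine obstacle — and the reason $\delta$ must depend on $a,b$ — is this near-homoclinic estimate: as $a,b\to 0^+$ the loop swells to fill $Q$ and $I(c_s)\to 0$, so the positivity is not soft and must be extracted from the geometry of the loop near the edge midpoints.
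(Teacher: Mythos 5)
Your overall setup is sound (peeling off the $O(\eps)$ term, reducing to $\iint_{\Int\gamma}(\cos 2x+\cos 2y)\,dxdy>0$, and observing that a pointwise sign argument cannot work near the homoclinic loop), but the proposal has a genuine gap at exactly the step you yourself flag as ``where the work is.'' Your argument for $I(c_s)>0$ is: since $\iint_Q\psi=0$ and the crescent $Q\setminus\Int\gamma_{c_s}$ is a thin neighborhood of (part of) $\partial Q$, where $\psi\le 0$, the crescent integral is strictly negative. This inference is not valid. The crescent necessarily passes through the four edge midpoints of $Q$, where $\psi$ vanishes \emph{and changes sign}: near $(\pi/2,0)$, moving inward gives $\psi(\pi/2-\xi,\eta)\approx 2\xi^2-2\eta^2$, so the interior side of the crescent near a midpoint lies inside the diamond $|x|+|y|<\pi/2$ where $\psi>0$. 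A thin neighborhood of a set where $\psi\le 0$ can therefore carry a positive integral contribution, and you give no estimate comparing the two competing pieces. Likewise, your bridge for intermediate $c$ -- either a perturbation from $a=b=0$ or a single sign change of $G$ on $(c_s,c_0)$ -- is asserted but not established; the second alternative in particular is the sort of monotonicity claim for period-type integrals that typically requires a separate argument. So the scheme (coarea parametrization, compactness in $c$) is coherent, but both load-bearing positivity claims remain unproved.

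The paper avoids the $c$-parametrization entirely. It splits $\Int\gamma$ by the coordinate axes into four quadrant pieces, and in each quadrant uses that $\psi=2\cos(x+y)\cos(y-x)$ is odd across the diagonal $L:\;x+y=\pi/2$. The level curve $\gamma$ either stays below $L$ (where $\psi>0$) or crosses $L$ exactly twice at acute angles, which forces the reflection of the region above $L$ to land inside $\Int_1\gamma$ below $L$. The odd symmetry then cancels the two parts exactly, leaving an integral over a set where $\psi>0$. This handles every closed level curve and the homoclinic loop at once, with no end-point matching; your near-homoclinic difficulty simply never arises. If you want to salvage your outline, the reflection step is essentially what you would have to prove to make the crescent estimate rigorous, so you may as well adopt it directly.
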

  \begin{figure}[H]
      \centering
      \includegraphics[scale=2]{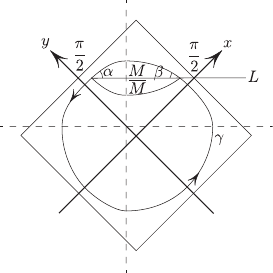}
      \caption{To the proof of Lemma~\ref{l:repel}. For convenience, the drawing is rotated by $  \pi/4 $.}
      \label{f:repel}
  \end{figure}
\begin{proof}
  Let us relax the implicit assumption $0 < a_0, b_0 < 0.1$ stated in the beginning of Appendix~\ref{a:perturbed} to $0 < |a_0|, |b_0| < 0.1$ to make the statement symmetric.

  By~\eqref{e:div} we have $\Div {\bf f} = \psi + O(\eps)$, where $\psi(x, y) = 2 \cos(x+y)\cos(y-x)$. Let us first prove that
  \begin{equation} \label{e:int-psi}
      \iint_{\Int \gamma} \psi \;dxdy > 0.
  \end{equation}
  As the vertical lines $x=\frac \pi 2 + \pi k$ and the horizontal lines $y=\frac \pi 2 + \pi k$ are transversal to the flow~\eqref{e:fluid-ode}, $\gamma$ is inside one of the squares
  $(-\frac \pi 2 + \pi k_1, \frac \pi 2 + \pi k_1) \times (-\frac \pi 2 + \pi k_2, \frac \pi 2 + \pi k_2)$.
  Without loss of generality, we will assume that $\gamma \subset (-\frac \pi 2, \frac \pi 2)^2$. Divide $\Int \gamma$ into four parts $\Int_1\gamma, \dots, \Int_4 \gamma$ lying inside each quadrant. We will prove that for each of the quadrants the integral over the corresponding part of
  $ \Int \gamma$ is positive, unless this part is empty. Note that in this lemma we do not require $a, b > 0$, assuming only that  $a, b \ne 0$ and $|a|, |b| < 0.1$. So, the statement is symmetric, and it is enough to prove this fact for $\Int_1 \gamma$ (and any allowed $a, b$):
  \begin{equation} \label{e:psi}
    \iint_{\Int_1 \gamma} \psi \;dxdy > 0, \qquad \text {provided that } \Int_1 \gamma \text{ is non-empty.}
  \end{equation}
  We have $\Int_1 \gamma \subset (0, \frac \pi 2)^2$. Denote by $L$ the line $x+y = \frac \pi 2$; this line is the diagonal of the square  $(0, \frac \pi 2)^2$, dividing it  in two parts (Figure~\ref{f:repel}), with $\psi < 0$ in the upper part  and with $\psi > 0$ in the lower part. Note also that $\psi$ is odd with respect to $L$ (indeed, for two symmetric w.r.t. $ L $  points the values of $ x-y $ are equal while the values of $  \pi/2 -(x+y) $ have opposite signs).
  If the whole of $\gamma$ is below $L$ then  $\psi$ is positive on $\Int_1 \gamma$, and~\eqref{e:psi} is clear.
  Along the Hamiltonian vector field~\eqref{e:fluid-ode}, we have (using product to sum formulas)
  \[
    \frac{d}{dt}(x+y) = \sin(x-y)+(a+b),
    \qquad
    \frac{d}{dt}(x-y) = -\sin(x+y)+(b-a).
  \]
  So, in Figure~\ref{f:repel} the vector field~\eqref{e:fluid-ode} is horizontal when $\sin(y-x)=a+b$ and vertical when $\sin(x+y)=b-a$,  drawn as two\footnote{Those equations also give other lines, but it is easy to see that a closed trajectory remianing in $(-\frac \pi 2, \frac \pi 2)^2$ cannot intersect them.} dashed lines in Figure~\ref{f:repel}.
  This means that in the first quadrant $\gamma$ first goes (using the orientation of Figure~\ref{f:repel}) up and left, then crosses the vertical dashed line, and then goes down and left, thus it intersects $L$ exactly twice (or is fully below $L$, this case is easy as $\psi > 0$ below $L$). Moreover, the angles $\alpha$ and $\beta$ formed by the part of $\gamma$ above $L$ and the segment of $L$ connecting two intersection points are acute. Denote by $M$ the part of $\Int_1 \gamma$ above $L$ and by $\bar M$ its reflection over $L$. As $\alpha$ and $\beta$ are acute, $\bar M \subset \Int_1 \gamma$. By symmetry, $\iint_{M \cup \bar M} \psi \;dxdy = 0$, and, as $\Int_1 \gamma \setminus (M \cup \bar M)$ is below $L$,
  \[
    \iint_{\Int_1 \gamma} \psi \;dxdy = \iint_{\Int_1 \gamma \setminus (M \cup \bar M)} \psi \;dxdy > 0.
  \]
  Finally, taking the sum over all quadrants yields~\eqref{e:int-psi}.

  Let us now use~\eqref{e:int-psi} to complete the proof of the lemma.
  First, let us prove that for small enough $\hat \delta$
  \begin{equation*}
    \iint_{\Int \gamma} \psi \;dxdy > \hat \delta A(\gamma).
  \end{equation*}
  Set $I(\gamma) = \iint_{\Int \gamma} \psi \;dxdy$.
  As the closed trajectories are nested, we can parametrize them by the area:
  $\gamma = \gamma(A)$, where $A \in [0, A_{\max}]$ with $A_{max}$ being the area bounded by the homoclinic loop.
  The function $I(A) = I(\gamma(A))$ is continuous. It is positive on $[0.001, A_{max}]$ and thus reaches some minimum $I_1$ there. Then we have $I(A) \ge \frac{I_1}{A_{\max}} A$ if $A \ge 0.001$. Finally, all bounded trajectories with the area less than $0.001$ lie inside the square $|x| + |y| < \frac \pi 2 - 0.01$ where $\psi$ is positive and bounded away from zero by some small constant $\kappa$.
  So, $I(A) > \kappa A$ on $[0, 0.001]$.

  It remains to note that
  \[
    \int_\gamma \Div {\bf f} \; dxdy = \int_\gamma \psi(x, y) + O(\eps) \; dxdy > 0.5 \hat \delta A(\gamma)
  \]
  provided that $\eps$ is small enough.
\end{proof}

\begin{corollary} \label{c:no-periodic-inside-loops}
  The perturbed system~\eqref{e:perturbed} has no periodic orbits or homoclinic loops inside or close to homoclinic loops of the unperturbed system.

  More formally, let $L_\delta$ denote the $\delta$-neighborhood of the region bounded by a homoclinic loop.
  Then, for any $c > 0$ for all small enough $\eps$   the perturbed system~\eqref{e:perturbed} has no periodic orbits or homoclinic loops contained inside $L_{c \eps}$ with smallness of $ \eps $ depending on $ c $.
\end{corollary}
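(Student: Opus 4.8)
The plan is to derive a contradiction from the divergence lower bound of Lemma~\ref{l:repel} via Green's theorem. Suppose, contrary to the claim, that for some $c>0$ and arbitrarily small $\eps$ the perturbed system~\eqref{e:perturbed} has a periodic orbit or a homoclinic loop $\Gamma\subset L_{c\eps}$; let $D=\Int\Gamma$ be the topological disk it bounds. Since $L_{c\eps}$ shrinks to the closed region $\bar U$ bounded by a single homoclinic loop of~\eqref{e:fluid-ode} as $\eps\to 0$, for $\eps$ small $\Gamma$ and $D$ lie in (a small neighborhood of) the one cell containing $\bar U$; the only fixed points of~\eqref{e:perturbed} that can lie in $D$ are the saddle near $\partial\bar U$ and the source near the cell center (every other fixed point is at distance bounded below by a positive constant from $\bar U$), so by the \PH index theorem $D$ encloses exactly the source and not the saddle (index $+1$ versus $-1$, total index $+1$). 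Now note that $X={\bf v}+\eps{\bf f}$ is tangent to $\Gamma$ at every point (it vanishes at the saddle in the homoclinic case), so $\Gamma$ is an integral curve and Green's theorem gives $0=\oint_\Gamma\langle X,n\rangle\,ds=\iint_D\Div X\,dx\,dy$. Since ${\bf v}$ is Hamiltonian, $\Div{\bf v}=0$ and $\Div X=\eps\Div{\bf f}$, hence $\iint_D\Div{\bf f}\,dx\,dy=0$.

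To reach a contradiction I would show $\iint_D\Div{\bf f}\,dx\,dy>0$ for $\eps$ small by reusing the proof of Lemma~\ref{l:repel}. Write $\Div{\bf f}=\psi+O(\eps)$ with $\psi=2\cos(x+y)\cos(y-x)$ as in~\eqref{e:div}, and split $D$ into its parts $D_1,\dots,D_4$ in the four quadrants of the cell. For $\eps$ small the flow~\eqref{e:perturbed} is $C^1$-close to~\eqref{e:fluid-ode}, so each arc of $\Gamma$ inside a quadrant still meets the relevant diagonal $L_i$ at most twice and at angles bounded away from a right angle (the saddle sits off $L_i$, so the slow passage near it adds no crossings); exactly as in Lemma~\ref{l:repel}, the reflection across $L_i$ of the part of $D_i$ above $L_i$ lands in $D_i$, and oddness of $\psi$ about $L_i$ gives $\iint_{D_i}\psi>0$ whenever $D_i\ne\varnothing$. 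The portion of $D$ near the cell center is handled as in Lemma~\ref{l:repel}, where $\psi$ is bounded below by a positive constant. Summing over quadrants yields a uniform bound $\iint_D\psi\,dx\,dy\ge\delta'A(D)$, and since $A(D)$ is bounded the $O(\eps)A(D)$ error from $\Div{\bf f}-\psi$ does not destroy positivity once $\eps$ is small enough — contradiction.

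I expect the main obstacle to be the uniformity of this last step when $\Gamma$ runs within $O(\eps)$ of the homoclinic loop: there the period of $\Gamma$ is only $O(|\ln\eps|)$, and the crossing/reflection picture near the saddle is less transparent. I would bypass it by sandwiching $D$ between interiors of Hamiltonian level curves: along~\eqref{e:perturbed} one has $\dot H=\eps\langle\nabla H,{\bf f}\rangle=O(\eps)$, so over one period $H$ oscillates on $\Gamma$ by at most $O(\eps\cdot\mathrm{period})=O(\eps|\ln\eps|)$; since $\Gamma$ encircles exactly the source (index count above), $D$ differs from a region $\{H\ge h_0\}\cap U$ by area $O(\eps|\ln\eps|)$. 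Applying Lemma~\ref{l:repel} to the bounding Hamiltonian level curve, and treating $h_0$ near the center value separately using positivity of $\psi$ there, again gives $\iint_D\Div{\bf f}\,dx\,dy>0$, now with the required uniformity, completing the contradiction.
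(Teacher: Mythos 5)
Your fallback argument is exactly the paper's proof: assume a periodic orbit or loop $\gamma_\eps\subset L_{c\eps}$ exists, note $\iint_{\Int\gamma_\eps}\Div({\bf v}+\eps{\bf f})=0$ hence $\iint_{\Int\gamma_\eps}\Div{\bf f}=0$, rule out small $\gamma_\eps$ because $\Div{\bf f}$ is bounded away from zero near the source, and then compare with a nearby Hamiltonian orbit $\gamma$ to get $\iint_{\Int\gamma_\eps}\Div{\bf f}=\iint_{\Int\gamma}\Div{\bf f}+o(1)>\delta S+o(1)>0$ by Lemma~\ref{l:repel} --- a contradiction. Your extra ingredients (the \PH index count to locate the source in $\Int\Gamma$, the $O(\eps|\ln\eps|)$ bookkeeping for the area discrepancy) are harmless refinements, and you rightly flag the preliminary attempt to run the quadrant-reflection argument directly on $\Int\Gamma$ as unreliable near the saddle and do not rely on it.
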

\begin{proof}
  Assume the contrary: there exists a periodic orbit or a homoclinic loop $\gamma_\eps$ of the perturbed system contained inside $L_{c \eps}$. But then $\gamma_\eps$ is $O(\eps)$-close to a periodic orbit $\gamma$ of the Hamiltonian system.
  By the divergence theorem, we have $\iint_{\Int \gamma_\eps} \Div ({\bf v} + \eps {\bf f}) \; dxdy = 0$.
  Since near the elliptic fixed points of the unperturbed system the divergence of the perturbation is negative by~\eqref{e:div}, $\gamma_\eps$ cannot be very small so that there exists $S > 0$ independent of $\eps$ such that the area bounded by $\gamma$ is greater than $S$.
  But then
  \[
  0 = \eps^{-1} \iint_{\Int \gamma_\eps} \Div ({\bf v} + \eps {\bf f})
  =
  \iint_{\Int \gamma_\eps} \Div {\bf f}
  =
  \iint_{\Int \gamma} \Div {\bf f} + O(\eps)
  > \delta S + O(\eps) > 0
  \]
  when $\eps$ is small enough. This contradiction proves the corollary.
\end{proof}

\begin{corollary} \label{c:int-div-loop} Referring to Figure~\ref{f:homoclinic-open},
  the homoclinic loops of the unperturbed system are split ``outwards'':  trajectories starting inside the former loops near the loops escape them. The size of the  split, i.e., the distance between two separatrices a fixed distance away from the saddle is of order $\eps$. Moreover, one of the two stable separatrices of each saddle never intersects the transversal $T$ defined in Section~\ref{s:poinc} (as it is ``trapped inside the loop'' as in Figure~\ref{f:homoclinic-open}).
\end{corollary}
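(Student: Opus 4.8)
I would prove this by combining the divergence estimate of Lemma~\ref{l:repel} with the divergence theorem applied to the perturbed flow — an area (Melnikov-type) form of the splitting computation, arranged so that Lemma~\ref{l:repel} feeds in directly. Fix one cell, let $S_0$ be its saddle of~\eqref{e:fluid-ode}, $\gamma_0$ the homoclinic loop, and $D_0=\Int\gamma_0$ the bounded region it cuts off (containing the center). First I would record that, since ${\bf v}+\eps{\bf f}$ is a $C^1$-$O(\eps)$ perturbation of ${\bf v}$ and $S_0$ is hyperbolic, there is a hyperbolic saddle $S_\eps=S_0+O(\eps)$ with local stable/unstable manifolds $C^1$-$O(\eps)$-close to those of $S_0$; then, following the two ``loop branches'' (the unstable branch forward from $S_\eps$, the stable branch backward from $S_\eps$) and gluing the two ends where $\gamma_0$ returns to the saddle by the $\lambda$-lemma, while using continuous dependence on $(\eps,a,b)$ on the bounded-time middle portion, I would obtain curves $W^u_\eps,W^s_\eps$ staying $O(\eps)$-close to $\gamma_0$. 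Fix a short segment $\Sigma$ transverse to $\gamma_0$ at a point $q_0$ at a definite distance from $S_0$; then $W^u_\eps,W^s_\eps$ cross $\Sigma$ transversally at points $p^u_\eps,p^s_\eps\to q_0$, and the ``size of the split'' is the signed distance $d(\eps)$ between them along $\Sigma$. I expect this infinite-time matching at the two ends to be the main technical obstacle — it is standard $\lambda$-lemma material, but must be stated carefully; everything after it is soft.

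\textbf{The split is of order $\eps$ and outward.}
Let $R_\eps$ be the region bounded by the arc of $W^u_\eps$ from $S_\eps$ to $p^u_\eps$, the sub-segment $[p^u_\eps,p^s_\eps]\subset\Sigma$, and the arc of $W^s_\eps$ from $p^s_\eps$ back to $S_\eps$; it differs from $D_0$ by a set of area $O(\eps)$. The plan is to apply the divergence theorem to ${\bf v}+\eps{\bf f}$ on $R_\eps$: the field is tangent to the two arcs lying in $W^u_\eps,W^s_\eps$, so they contribute no flux, and
\[
  \int_{[p^u_\eps,\,p^s_\eps]\subset\Sigma}({\bf v}+\eps{\bf f})\cdot{\bf n}\,ds
  \;=\;\iint_{R_\eps}\Div({\bf v}+\eps{\bf f})\,dx\,dy
  \;=\;\eps\iint_{R_\eps}\Div{\bf f}\,dx\,dy.
\]
Since $\Div{\bf f}$ is bounded and $R_\eps$ differs from $D_0$ by area $O(\eps)$, the right side is $\eps\big(\iint_{D_0}\Div{\bf f}+O(\eps)\big)$, and Lemma~\ref{l:repel} (applied to $\gamma_0$) gives $\iint_{D_0}\Div{\bf f}>\delta(a,b)A(\gamma_0)>0$, bounded away from $0$ uniformly for $\eps$ small. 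On the left side, transversality of $\Sigma$ to ${\bf v}$ gives $({\bf v}+\eps{\bf f})\cdot{\bf n}=c_0+o(1)$ along $\Sigma$ near $q_0$ with $c_0\ne0$ fixed, so $|d(\eps)|$ is of order $\eps$, with implied constant depending only on $a,b$. For the direction, I would avoid orientation bookkeeping: an \emph{inward} split would force $W^u_\eps$ to re-enter $R_\eps$ after crossing $\Sigma$, so by the \PB theorem its $\omega$-limit set would be a fixed point, periodic orbit, or graphic inside a bounded part of the cell; but the only interior fixed point is the center (a source, hence not forward-attracting), and there are no periodic orbits or homoclinic loops inside the cell — near $\gamma_0$ by Corollary~\ref{c:no-periodic-inside-loops}, and away from $\gamma_0$ by the same divergence estimate (a perturbed periodic orbit is $O(\eps)$-close to a closed orbit of~\eqref{e:fluid-ode}, over whose interior $\iint\Div{\bf f}>0$, contradicting $\iint\Div({\bf v}+\eps{\bf f})=0$). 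This contradiction shows the split is outward, i.e. $W^u_\eps$ escapes $D_0$ just after meeting $\Sigma$, as in Figure~\ref{f:homoclinic-open}.

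\textbf{Escape and the trapped separatrix.}
With the above in hand, inside the cell the perturbed flow has only the source (former center) and the saddle $S_\eps$ as fixed points and no periodic orbits or homoclinic loops, so by the \PB theorem any forward orbit that stays in the cell must lie on a stable separatrix of $S_\eps$; in particular any forward orbit starting inside the former loop and not on such a separatrix leaves it — the first assertion. For the last assertion I would single out the stable separatrix of $S_\eps$ that is the loop branch $W^s_\eps$: traced backward from $S_\eps$ it crosses $\Sigma$ at $p^s_\eps$ and then stays in the cell, and by the \PB theorem for the reversed flow its backward limit set — not the saddle (that would be a homoclinic loop, excluded by Corollary~\ref{c:no-periodic-inside-loops}) and not a periodic orbit or graphic (none in the cell) — is the source, which attracts for the reversed flow; hence $W^s_\eps$ stays in the cell for all backward time and never meets the transversal $T$ of Section~\ref{s:poinc}, which sits on the cell boundary. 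The other (free) stable separatrix of $S_\eps$ is $C^1$-$O(\eps)$-close to the free stable separatrix of $S_0$, which crosses $T$ transversally by Lemma~\ref{l:ham-sep}, so it crosses $T$ as well; thus exactly one stable separatrix of each saddle is trapped, as claimed.
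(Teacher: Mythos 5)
Your proof is correct and follows essentially the same route as the paper's: form a Bendixson bag bounded by the two separatrix arcs and a transversal segment, apply the divergence theorem so that all the flux of ${\bf v}+\eps{\bf f}$ passes through the transversal segment, feed in Lemma~\ref{l:repel} (here you invoke it directly, the paper via Corollary~\ref{c:no-periodic-inside-loops}) to get a positive order-$\eps$ flux, and conclude the splitting is of order $\eps$. The one point where you genuinely deviate is the direction of the split: the paper reads it off from the sign of the flux through $S$ (plus the fact that the normal component of the field is near a nonzero constant there), whereas you argue by contradiction with \PB --- if the split were inward, $W^u_\eps$ would be trapped in the cell with no admissible $\omega$-limit set. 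Both work; the flux-sign argument is shorter but requires tracking orientations, while your \PB route sidesteps orientation bookkeeping at the cost of needing that the former center is a source (available at this point from~\eqref{e:div}, though formally stated only later in Corollary~\ref{c:fixed-points}) and that there are no periodic orbits or graphics in the cell. You are also more explicit than the paper about two points it leaves implicit: the $O(\eps)$-closeness of the perturbed separatrix branches to $\gamma_0$ near the saddle (you route this through the $\lambda$-lemma), and the ``escape'' claim for orbits starting inside the former loop, which you derive from \PB; these are reasonable elaborations rather than changes of method.
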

\begin{proof}
  Consider any line segment $L$ transversal to a homoclinic loop of the unperturbed Hamiltonian system. Let $S$ be the segment between the intersections of the stable and unstable separatrices of the perturbed system that are continuations of the former homoclinic loop of the unperturbed system (we consider intersections closest to the saddle along the separatrices).
  Consider the closed contour $B$ formed by $S$ together with the two pieces of separatrices of perturbed system connecting the saddle with the endpoints of $S$ (this is a standard construction called \emph{Bendixson bag}).
  Note that these separatrix pieces are $O(\eps)$-close to the homoclinic loops of the Hamiltonian system.
  Hence, the same argument as in the proof of Corollary~\ref{c:no-periodic-inside-loops} shows that integral of $\Div {\bf f}$ over the area bounded by $B$ is positive and of order $\eps$.
  It equals to the flux of ${\bf v} + \eps {\bf f}$ through $B$, so this flux is also positive and of order $\eps$.
  All this flux goes through $S$
  (as separatrices are integral curves of ${\bf v} + \eps {\bf f}$),
  and restricted to $S$ the vector field is close to a constant.
  Thus, this flux goes outside and the length of this segment (which measures separatrix splitting) is of order $\eps$.
  To prove the "moreover" part, we note that the stable manifold is trapped inside $B$ in backward time.
\end{proof}

\begin{corollary} \label{c:fixed-points}
  The perturbed system~\eqref{e:perturbed} on the torus has four fixed points: two saddles that are continuations of the saddles of the unperturbed flow ${\bf v}$, and two unstable foci that are continuations of the two centers of ${\bf v}$.
\end{corollary}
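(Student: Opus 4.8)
The fixed points of the perturbed flow~\eqref{e:perturbed} are the zeros of ${\bf v} + \eps {\bf f}$ on the torus $\mathbb T^2$. Since the unperturbed Hamiltonian field ${\bf v}$ (given by~\eqref{e:fluid-ode}) has exactly four nondegenerate fixed points on $\mathbb T^2$ — two saddles and two centers, by Lemma~\ref{l:ham-sep} applied on the fundamental domain — the implicit function theorem immediately gives, for $\eps$ small, exactly one fixed point of ${\bf v}+\eps{\bf f}$ near each of them, and no fixed point outside a fixed neighborhood of $\{{\bf v}=0\}$ (because $|{\bf v}|$ is bounded below off such neighborhoods and $\eps|{\bf f}|$ is small). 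So the count of four fixed points, with the two near the old saddles again saddles (hyperbolicity is an open condition), is essentially automatic. The only genuine content is the claim that the two fixed points near the former centers are \emph{unstable} foci rather than stable ones — the perturbation must push them in the repelling direction.

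First I would set up the local picture at a center of ${\bf v}$. At such a point the linearization of ${\bf v}$ has eigenvalues $\pm i \omega$ for some $\omega>0$ (purely imaginary, since ${\bf v}$ is Hamiltonian and the point is a center), so the linearization of ${\bf v}+\eps{\bf f}$ at the nearby fixed point $p_\eps$ has eigenvalues $\sigma(\eps) \pm i\omega(\eps)$ with $\sigma(0)=0$ and $\omega(0)=\omega>0$; in particular $p_\eps$ is a focus for small $\eps$, and its stability is governed by the sign of $\sigma(\eps)$, equivalently by the sign of the trace of the Jacobian of ${\bf v}+\eps{\bf f}$ at $p_\eps$, i.e.\ by $\operatorname{sign}\big(\eps\,\Div{\bf f}(p_\eps)+O(\eps^2)\big)$ (the divergence of ${\bf v}$ vanishes identically). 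So the whole question reduces to the sign of $\Div{\bf f}$ at the center. By~\eqref{e:div}, $\Div{\bf f} = 2\cos(x+y)\cos(y-x)+O(\eps)$, and a center of ${\bf v}$ is $O(a,b)$-close to a node of the grid where $\cos x\cos y = \pm 1$, i.e.\ $(x,y)\approx(\pi k_1,\pi k_2)$; at such a node $\cos(x+y)=\cos(y-x)=\pm1$ with the \emph{same} sign, so the product $\cos(x+y)\cos(y-x)=1>0$. Hence $\Div{\bf f}>0$ at the center for $a,b,\eps$ small, so $\sigma(\eps)>0$ and the focus is unstable. (Alternatively, and perhaps more cleanly, one can invoke Lemma~\ref{l:repel} directly: shrinking a small Hamiltonian periodic orbit $\gamma$ around the center to the point, the divergence theorem gives that the flux of ${\bf v}+\eps{\bf f}$ out of $\operatorname{Int}\gamma$ equals $\eps\iint_{\operatorname{Int}\gamma}\Div{\bf f}>0$, so trajectories cross small loops outward and the fixed point inside repels — this is essentially Corollary~\ref{c:no-periodic-inside-loops} run in reverse, and it also re-proves that there is a unique fixed point in each homoclinic cell.)

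The steps in order: (1) invoke Lemma~\ref{l:ham-sep} for the exact fixed-point inventory of ${\bf v}$ on $\mathbb T^2$ (two saddles, two centers, all nondegenerate); (2) apply the implicit function theorem to continue each to a unique nearby fixed point of ${\bf v}+\eps{\bf f}$, and a compactness/lower-bound argument to rule out any other fixed points; (3) saddles stay saddles by persistence of hyperbolicity; (4) for each center-continuation, compute the trace of the Jacobian of ${\bf v}+\eps{\bf f}$, reduce its sign to that of $\Div{\bf f}$ at the center; (5) evaluate $\Div{\bf f}$ at (a point $O(a,b)$-close to) a grid node using~\eqref{e:div} and conclude it is positive, hence the focus is unstable. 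I do not expect a serious obstacle here — the result is a soft consequence of the structure already in place. The one place to be slightly careful is step (4): making sure that the eigenvalues are genuinely complex (not two real eigenvalues of the same sign), which follows because $\omega(0)=\omega\ne 0$ so the discriminant stays negative for small $\eps$; and making sure the "$O(\eps)$" in~\eqref{e:div} does not spoil the sign, which is fine because $\Div{\bf f}$ at the node is bounded away from zero uniformly in small $a,b$. So the bulk of the write-up is bookkeeping, with the sign computation in~\eqref{e:div} as the only substantive line.
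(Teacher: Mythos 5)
Your proposal is correct and follows essentially the same route as the paper: count fixed points by the implicit function theorem plus a lower bound on $|{\bf v}|$ away from its zeros, note saddles persist by hyperbolicity, and for the centers reduce stability to the sign of $\Div({\bf v}+\eps{\bf f}) = \eps\Div{\bf f}$, which is positive because the centers sit near the lattice points $(\pi k_1, \pi k_2)$ where $2\cos(x+y)\cos(y-x)=2$. The only cosmetic difference is that the paper records the explicit uniform localization $|x|+|y|<\pi/4$ and $|x|+|y-\pi|<\pi/4$ for the centers rather than your $O(a,b)$-closeness, but this is just bookkeeping and your final remark about uniformity covers it.
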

\begin{proof}
  A straightforward computation shows that when when $0 < a, b < 0.1$ the unperturbed Hamiltonian system has two saddles and two centers on the fundamental domain $[-\frac \pi 2, \frac \pi 2] \times [-\frac \pi 2, \frac {3\pi} 2]$; and the map ${\bf x} \mapsto {\bf v}({\bf x})$ has nonzero Jacobians at these points.
  As the perturbation $\eps {\bf f}$ is small in the $C^1$ topology, by the implicit function theorem these fixed points are smooth functions of $\eps$ near $\eps=0$, and there are no other fixed points in small neighborhoods of these points. This implies that there are no other fixed points at all for small $\eps$, as outside the neighborhoods above $|{\bf v}|$ is separated from zero.

  Let us now classify these four fixed points of the perturbed system. The saddles of the unperturbed system clearly remain saddles for the perturbed system as well.
  A straightforward computation shows that the centers of the unperturbed system are quite close to the points $(0, 0)$ and $(0, \pi)$: namely, one of them satisfies $|x| + |y| < \frac \pi 4$, and the other one satisfies $|x| + |y-\pi| < \frac \pi 4$. Hence, continuations of these centers for the perturbed system are in the areas where $\Div {\bf f} > 0$ by~\eqref{e:div}. For the centers, the two eigenvalues are purely imaginary; for the perturbed system these eigenvalues are close to the ones for the unperturbed system so they are complex and congugate to each other, and their sum is $\Div ({\bf v} + \eps{\bf f}) = \eps \Div {\bf f} > 0$.
  Hence, the real parts of both eigenvalues are positive, and centers of the unperturbed system are continued for the perturbed system as unstable foci.
\end{proof}

\begin{corollary} \label{c:transversal-or-separatrix}
  For the flow~\eqref{e:perturbed}, the positive orbit with the starting value of     $x \in [-\frac \pi 2, \frac \pi 2)$ intersects the transversal $\{x = \frac \pi 2\}$ unless the initial point is a fixed point or lies on the stable manifold of a saddle.
\end{corollary}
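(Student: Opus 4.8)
The plan is to cut the torus $\mathbb T^2$ along the transversal $T$, obtaining the closed cylinder $\bar\Pi$ given by $-\frac\pi2\le x\le\frac\pi2$, and to run a \PB argument inside it. On the two boundary circles $\{x=\pm\frac\pi2\}$ one has $\dot x=b+O(\eps)>0$ (the transversality already used above), so the flow enters $\bar\Pi$ through $\{x=-\frac\pi2\}$ and leaves only through $\{x=\frac\pi2\}=T$. Hence, for a starting point $p$ with $x$-coordinate in $[-\frac\pi2,\frac\pi2)$, either the forward orbit meets $T$ and we are done, or it stays in the compact set $\bar\Pi$ for all $t\ge0$. Assuming the latter, no point of $\omega(p)$ can lie on $\{x=\pm\frac\pi2\}$, since its forward, resp.\ backward, orbit would immediately leave $\bar\Pi$; thus $\omega(p)$ is a nonempty compact connected invariant subset of the open cylinder $\Pi=\{-\frac\pi2<x<\frac\pi2\}$.

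By the \PB theorem on the cylinder, $\omega(p)$ is a single fixed point, a periodic orbit, or a graphic (finitely many fixed points joined by connecting orbits, a perturbed homoclinic loop included). If $\omega(p)=\{q\}$, then by Corollary~\ref{c:fixed-points} $q$ is either an unstable focus -- impossible unless $p=q$, since $|{\bf x}(t)-q|$ is strictly increasing near a source -- or a saddle, in which case the forward orbit of $p$ converges to $q$ and $p$ lies on the saddle's stable manifold; in both situations $p$ is one of the excepted initial conditions. So the remaining task is to show that for $\eps$ small the perturbed flow has \emph{no} periodic orbit and no graphic contained in $\Pi$.

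For this I would combine the earlier structural results with a limiting argument. A periodic orbit or graphic in $\Pi$ that were non-contractible there would project onto the $y$-circle with nonzero degree, hence surjectively, and so would its Hausdorff limit as $\eps\to0$ -- impossible, since any compact invariant set of the Hamiltonian flow in $\Pi$ lies in a single homoclinic cell (by Lemma~\ref{l:ham-sep} the free separatrices cross $T$ and so cannot belong to it) and has $y$-extent below $2\pi$. Thus a hypothetical periodic orbit $\gamma$ bounds a disk $D(\gamma)\subset\Pi$. If $D(\gamma)$ lies inside, or within $c\eps$ of, a former homoclinic-loop region, Corollary~\ref{c:no-periodic-inside-loops} excludes it (that region is contained in $L_{c\eps}$). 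Otherwise, along a subsequence $\eps\to0$, $\gamma$ Hausdorff-converges to a compact connected Hamiltonian-invariant set $\Gamma_0\subset\Pi$; unable to collapse onto a hyperbolic saddle, $\Gamma_0$ is a Hamiltonian periodic orbit or homoclinic loop bounding a disk $D_0$, and $D(\gamma)\to D_0$. But ${\bf v}+\eps{\bf f}$ is tangent to $\gamma$, so $\iint_{D(\gamma)}\Div({\bf v}+\eps{\bf f})=0$; dividing by $\eps$ gives $\iint_{D(\gamma)}\Div{\bf f}=0$, contradicting Lemma~\ref{l:repel}, which forces $\iint_{D_0}\Div{\bf f}>\delta A(D_0)>0$. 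The same zero-flux reasoning handles graphics, which in $\Pi$ can only be perturbed homoclinic loops (treated as above), a heteroclinic cycle between the two saddles being impossible: the free unstable separatrices leave $\Pi$ (Lemma~\ref{l:ham-sep}) and the trapped stable ones issue from the sources (Corollary~\ref{c:int-div-loop}), so no connecting orbit is available.

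The main obstacle is this last exclusion step. Corollary~\ref{c:no-periodic-inside-loops} only rules out orbits hugging the separatrix network at scale $\eps$; to kill a periodic orbit or polycycle sitting at an intermediate distance one must pass to the Hamiltonian limit and exploit the strict positivity $\iint\Div{\bf f}>0$ over homoclinic cells and loop interiors from Lemma~\ref{l:repel}. Granting the claim, $\omega(p)$ can only be a single fixed point, which as above places $p$ among the stated exceptions, completing the proof.
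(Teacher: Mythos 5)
Your proof follows the paper's Poincar\'e--Bendixson skeleton, but it takes a genuinely different route on the key exclusion step. The paper works in $\mathbb R^2$ with a fundamental disk $U$ bounded by two copies of a crossing trajectory and the two transversals, so noncontractible closed orbits never arise; it then excludes periodic orbits by a shadowing argument (an orbit staying $c\eps$-far from the Hamiltonian homoclinic loops stays $O(\eps)$-close to an unperturbed orbit that crosses the strip in bounded time and hence exits through $\{x=\pi/2\}$, while orbits confined to $L_{c\eps}$ are killed by Corollary~\ref{c:no-periodic-inside-loops}), and it rules out heteroclinic cycles via a persistent trajectory of the unperturbed flow separating the two saddles (Lemma~\ref{l:ham-sep}). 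You instead work directly on the cylinder, which forces you to separately dispose of noncontractible closed orbits (which you do, via the Hausdorff-limit/degree observation), and you replace shadowing by compactness: take $\eps_n\to0$ along a putative sequence of periodic orbits, pass to a Hausdorff limit, and contradict Lemma~\ref{l:repel}. This is an appealing, self-contained alternative resting squarely on the repulsion lemma, but you should spell out why the boundary of the limiting disk $D_0$ must be a Hamiltonian periodic orbit or homoclinic loop and not some more degenerate invariant set---the argument is that any compact connected invariant curve contained in the open strip is a fixed point, a periodic orbit, or a homoclinic loop (the free separatrices exit the strip), and the fixed-point case you already handle; this deduction deserves to be made explicit since the putative orbits $\gamma_n$ need not have uniformly bounded length. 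Your separatrix-inventory argument against heteroclinic cycles is also correct and distinct from the paper's. On balance both routes succeed; yours trades the paper's quantitative shadowing estimate for a qualitative compactness argument, at the price of the extra noncontractible case and the need to classify the limiting boundary.
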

\begin{proof}
  Consider any trajectory $\gamma$ going from $\{x = -\frac \pi 2\}$ to $\{x = \frac \pi 2\}$. Let $U$ be the open set bounded by $\gamma$, $\gamma$ shifted up by $2\pi$, and the transversals $\{x = \pm\frac \pi 2\}$.
  We can shift the initial point vertically by $2 \pi k$ to be in this domain, denote this shifted point by ${\bf x}$.
  As the $x$-component of the vector field is positive on both vertical transversals, the positive orbit of ${\bf x}$ can only escape $U$ through $\{x = \frac \pi 2\}$.
  Suppose this does not happen, then the orbit stays in $U$. By the  Poincar\'e-Bendixson Theorem the $\omega$-limit set of ${\bf x}$ either is (1) a periodic orbit, (2) a fixed point, or (3) a union of fixed points, homoclinic, or heteroclinic orbits.

  We claim that there are no periodic orbits contained in $U$. Indeed, for a large enough $c$, any orbit that is outside homoclinic loops of the unperturbed system and stays $c\eps$-far from these loops reaches $\{x = \frac \pi 2\}$, as the orbit of the same initial point for the unperturbed system reaches this transversal, and the orbit for perturbed system stays $O(\eps)$-close to it. If the orbit is inside the loops or is $c\eps$-close to them, it cannot be periodic by Corollary~\ref{c:no-periodic-inside-loops}.

  Now, consider cases 2 and 3, when $\omega({\bf x})$ contains a fixed point. The fixed points of the perturbed system are described by Corollary~\ref{c:fixed-points} (note that $U$ is a fundamental domain).
  Assume ${\bf x}$ is not one of those fixed points. Then unstable foci cannot be in the $\omega$-limit set, and the only possibility left is that $\omega({\bf x})$ contains a saddle.
  Then either ${\bf x}$ is on a stable manifold of the saddle, which finishes the proof, or $\omega({\bf x})$ contains one of the unstable separatrices of the saddle as well, moving us to case (3).
  In case 3, only homoclinic or heteroclinic orbits connecting saddles are relevant, as
  unstable foci cannot belong to $\omega({\bf x})$. We claim that there are no such orbits contained in $U$. Homoclinic saddle loops are forbidden by Corollary~\ref{c:no-periodic-inside-loops}.
  Heteroclinic saddle connections between the two saddles in $U$ are forbidden by for the following reason: when $\eps=0$, there is a trajectory of ${\bf v}$ going from $x=-\frac \pi 2$ to $x = \frac \pi 2$ and separating the two saddles of ${\bf v}$ by Lemma~\ref{l:ham-sep}; this trajectory survives for the perturbed system when $\eps$ is small enough and hence it prohibits heteroclinic connection between the two saddles in $U$. Hence, the orbit of any point that is not a fixed point and does not lie on a stable separatrix leaves $U$ via its right side $x = \frac \pi 2$.
\end{proof}

\subsection{Phase portrait of the perturbed flow} \label{s:phase-portrait}
In this subappendix, we prove the first part of Proposition~\ref{p:circle-maps-family}.
We refer to the system with $a=a_0$, $b=b_0$ (recall the discussion in the beginning of Appendix~\ref{a:perturbed}), and $\eps=0$ as the {\it  unperturbed system} and the system with the parameters $a$, $b$, and $\eps$ satisfying $|a-a_0|, |b-b_0|$, $0 < \eps < \eps_1$ the {\it  perturbed system}.

\begin{proof}[Proof of the first part of Proposition~\ref{p:circle-maps-family}]

\noindent \emph{There are two saddles, two sources, and no other fixed points.}
A straightforward computation shows that when when $0 < a_0, b_0 < 0.1$, the unperturbed Hamiltonian system has two saddles and two centers on $[-\frac \pi 2, \frac \pi 2] \times [-\frac \pi 2, \frac {3\pi} 2]$; and the map ${\bf x} \mapsto {\bf v}({\bf x})$ has nonzero Jacobians at these points.
As the perturbation $\eps {\bf f} + (b-b_0, a-a_0)$ is $O(\eps_1)$-small in the $C^1$ topology, by the implicit function theorem these fixed points are smooth functions of $a$, $b$, and $\eps$ near $a=a_0$, $b=b_0$, $\eps=0$, and there are no other fixed points in small neighborhoods of these points. This implies that there are no other fixed points at all when the parameters are close enough to $(a_0, b_0, 0)$, as outside the neighborhoods above $|{\bf v}|$ is sepa
rated from zero.

Let us now classify these four fixed points of the perturbed system. The saddles of the unperturbed system clearly remain saddles for the perturbed system as well. A straightforward computation shows that one of  centers of the unperturbed system satisfies $|x| + |y| < \frac \pi 4$, while the other satisfies $|x| + |y-\pi| < \frac \pi 4$; and  $\Div {\bf f} > 0$ in both of these regions by~\eqref{e:div}. For the centers, the two eigenvalues are purely imaginary; for the perturbed system these eigenvalues are complex conjugate to each other and with positive real parts since
is $\Div ({\bf v} + \eps{\bf f}) > 0$ as $\Div {\bf v} = 0$.
Thus centers of the unperturbed system become   unstable foci of the perturbed system.

\medskip
\noindent \emph{The forward trajectory of any point that is not on a stable mainfold of a saddle and is not one of the sources intersects the transversal $T$.}  This is the conclusion of Corollary~\ref{c:transversal-or-separatrix} proved above.

\medskip
\noindent \emph{The reverse-time first return map is defined and continuous on the whole circle $T$ except for two closed intervals $I_1$, $I_2$. Each of these intervals is bounded by an intersection of the two unstable manifolds  of a saddle with $T$. One-sided limits of this map at the endpoints of such interval $I_j$ exist, and both coincide with the point $b_j$ of intersection of the stable manifold  of the same saddle with $T$.}

Recall $T$ is the vertical segment $x = -\frac \pi 2$, the lift of a circle on the torus.
By Corollary~\ref{c:int-div-loop}, one of the two stable separatrices of each saddle never intersects $T$.
Consider the forward-time first return map $P$ first. By Corollary~\ref{c:transversal-or-separatrix}, it is defined on the whole of $T$ except for the two points of intersection of the stable manifolds of the two saddles with $T$. Since these intersections are transversal for the unperturbed system, they persist for the perturbed system if  $\eps_1$ is small enough. Consider one of the saddles and the corresponding coordinate $y=b_i$ on the transversal where $P$ is undefined. Both unstable separatrices of this saddle intersect $T$ (they cannot coincide with a stable separatrix of another saddle as shown in the proof of Corollary~\ref{c:transversal-or-separatrix}). This means that left and right limits of $P$ at $b_i$ exist and are given by the intersections of the two unstable manifolds of the saddle with $T$.

Referring to Figure~\ref{f:homoclinic-open}, let    $b_1$ and $b_2$ be the $ y $-coordinates of the two points where $P$ is undefined and let $I_1$ be the closed   arcs  whose ends are the left and the right limits of $P$ at $b_1$, with $ I_2 $ defined similarly.
As $P$ is smooth on $S^1 \setminus \{b_1, b_2\}$, the inverse map $Q$ exists on $S^1 \setminus (I_1 \cup I_2)$. Moreover, one-sided limits at the endpoints of $I_i$ exist and are equal to $b_i$. Hence, setting $Q|_{I_i} = b_i$ turns $ Q $ into a continuous map with two flat spots.
\end{proof}

\subsection{Monotonicity of the flat spots heights} \label{a:plateau-heights}
In this subappendix, we prove Lemma~\ref{l:plateau-heights}. To this end, it is enough to prove the estimate on the derivative claimed in Lemma~\ref{l:plateau-heights} in the quantifiers stated in the beginning of Appendix~\ref{a:perturbed}. The lemma will follow by compactness as discussed in beginning of Appendix~\ref{a:perturbed}.

\begin{proof}[Proof of Lemma~\ref{l:plateau-heights}]
  Recall that "flat spots heights" $\tilde b_j(a, b, \eps)$ were defined as the values of $z = \frac{y-x}{2\pi}$ at the lifts to $\mathbb R$ of the images of the flat spots under the map $Q_s$,  Figure~\ref{f:homoclinic-open}); ``flat spots heights'' $\tilde b_j$ are the $y$-coordinates of the intersections of stable manifolds of the two saddles with the transversal.

  When $\eps = 0$, for any fixed $a_0, b_0 \in (0, \; 0.1)$ the functions $\tilde b_1$, $\tilde b_2$ are defined by Lemma~\ref{l:ham-sep}, and Lemma~\ref{l:plateau-heights-hamiltonian} claims that $\frac{\partial b_j}{\partial a} < -\frac 1 {2\pi}$ -- here we need to divide by $2\pi$ because Lemma~\ref{l:plateau-heights-hamiltonian} used $y$ as the coordinate on the transversal instead of $z$.
  As the separatrices smoothly depend on the parameters and intersect the transversal transversally, $\tilde b_1$ and $\tilde b_2$ are actually defined and $C^\infty$-smooth on some small neighborhood of $(a_0, b_0, 0)$ -- even allowing $\eps$ to be negative.
  As $\frac{\partial b_j}{\partial a}$ is continuous at $(a_0, b_0, 0)$, we have the weaker estimate $\frac{\partial b_j}{\partial a} < -0.15$ (note that $0.15 < \frac 1 {2\pi}$) for any $a$ and $b$ with $|a-a_0|, |b-b_0| < \eps_1$ and any positive $\eps < \eps_1$ when $\eps_1$ is small enough.
\end{proof}

\subsection{Contraction property of the first return map} \label{s:expand}

Before proving that   the first return map is contracting, we  state a general fact relating the derivative of a first return map with the divergence of the vector field.
Consider a $C^1$ vector field $v(x)$ on $\mathbb R^2$.
Let $x(t), \; t \in [t_1, t_2]$ be its trajectory.
Denote $x_1 = x(t_1)$, $x_2 = x(t_2)$.

We need the following well known (\cite[\S27.6]{arnold1992ordinary}) fact.
\begin{lemma}[Liouville's Theorem] \label{l:div}
Let $\psi$ be the time $t_2-t_1$ flow of $v$. Then
\begin{equation}
    \det D_{x_1} \psi = \exp\bigg(\int_{t_1}^{t_2} \Div v \; dt\bigg).
\end{equation}
\end{lemma}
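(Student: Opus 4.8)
The plan is to reduce the statement to a scalar linear ODE for the Jacobian of the flow along the trajectory, via the variational equation. First I would fix the trajectory $x(t)$, $t\in[t_1,t_2]$, and for $t\in[t_1,t_2]$ let $\phi^{t_1,t}$ denote the flow map of $v$ from time $t_1$ to time $t$, so that $\phi^{t_1,t}(x_1)=x(t)$ and $\phi^{t_1,t_2}=\psi$. Set $\Psi(t):=D_{x_1}\phi^{t_1,t}$, a $2\times2$ matrix. Differentiating the flow identity $\frac{d}{dt}\phi^{t_1,t}(x_1)=v\big(\phi^{t_1,t}(x_1)\big)$ with respect to $x_1$ and exchanging the order of differentiation (legitimate since the flow of a $C^1$ field is $C^1$ in the initial condition, by the standard smooth-dependence theorem for ODEs), one gets the variational equation
\[
  \dot\Psi(t)=A(t)\,\Psi(t),\qquad \Psi(t_1)=I,\qquad A(t):=D_{x}v\big(x(t)\big).
\]

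Next I would invoke Jacobi's formula for the derivative of a determinant: for a differentiable invertible matrix-valued function, $\frac{d}{dt}\det\Psi=\det\Psi\cdot\operatorname{tr}\!\big(\Psi^{-1}\dot\Psi\big)$. Here $\Psi(t)$ is invertible throughout, being the Jacobian of a diffeomorphism. Substituting $\dot\Psi=A\Psi$ and using cyclicity of the trace gives $\operatorname{tr}(\Psi^{-1}A\Psi)=\operatorname{tr}A(t)=\Div v\big(x(t)\big)$, so that $W(t):=\det\Psi(t)$ solves the scalar linear ODE
\[
  \dot W(t)=\Div v\big(x(t)\big)\,W(t),\qquad W(t_1)=1.
\]
Integrating this scalar equation yields $W(t)=\exp\!\big(\int_{t_1}^{t}\Div v(x(s))\,ds\big)$; evaluating at $t=t_2$ and noting $\Psi(t_2)=D_{x_1}\psi$ gives the claimed identity.

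The only point that needs care is the derivation of the variational equation, i.e.\ the $C^1$ dependence of the flow on the initial condition and the commutation of $\partial_t$ with $\partial_{x_1}$; this is a classical fact for $C^1$ vector fields and can simply be quoted (e.g.\ from~\cite{arnold1992ordinary}). The remaining steps are elementary linear algebra and a one-line integration, so there is no substantial obstacle.
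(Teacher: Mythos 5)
Your proof is correct, and it is the standard classical argument for Liouville's theorem: differentiate the flow with respect to initial conditions to obtain the variational equation $\dot\Psi=A(t)\Psi$, apply Jacobi's formula to reduce the determinant to a scalar linear ODE with coefficient $\operatorname{tr}A=\Div v$, and integrate. Note that the paper does not give its own proof of this lemma --- it states it as a ``well known'' fact and cites \cite[\S27.6]{arnold1992ordinary}; the argument you wrote out is essentially what appears in that reference, so there is no substantive difference between your route and the paper's implicit one.
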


\begin{corollary} \label{c:d-poincare}
Consider a transversal $T_1$ passing through $x_1$ and a transversal $T_2$ passing through $x_2$.
Let $s_1$ and $s_2$ be natural (arc length) parameters on $T_1$ and $T_2$ that are zero at $x_1$ and $x_2$.
Let $v_{n, 1}, v_{n, 2} > 0$ be the components of $v(x_1), v(x_2)$ normal to $T_1, T_2$.
Let $h: T_1 \mapsto T_2$ be the Poincar\'e map (we assume that the orientation of $T_1$ and $T_2$ is chosen so that $h$ is increasing).
Then
\begin{equation} \label{e:h-div}
    h'(0) = \frac{v_{n, 1}}{v_{n, 2}} \exp\bigg(\int_{t_1}^{t_2} \Div v \; dt\bigg).
\end{equation}
\end{corollary}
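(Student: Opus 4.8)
The plan is to reduce the computation of $h'(0)$ to Liouville's Theorem (Lemma~\ref{l:div}) by a change-of-coordinates argument that tracks how a small displacement along $T_1$ is transported by the flow to a displacement along $T_2$. First I would introduce the flow map $\psi: \mathbb R^2 \to \mathbb R^2$ (the time $t_2 - t_1$ flow of $v$), so that $\psi(x_1) = x_2$ and $\psi$ maps a neighborhood of $x_1$ diffeomorphically onto a neighborhood of $x_2$. Parametrize $T_1$ near $x_1$ by arc length $s_1 \mapsto \sigma_1(s_1)$ and $T_2$ near $x_2$ by $s_2 \mapsto \sigma_2(s_2)$, with $\sigma_i(0) = x_i$ and $|\sigma_i'| = 1$. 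The \poincare map $h$ is characterized implicitly: $\psi(\sigma_1(s_1))$ and $\sigma_2(h(s_1))$ lie on the same trajectory, i.e. $\psi(\sigma_1(s_1)) = \phi^{\tau(s_1)}(\sigma_2(h(s_1)))$ for some small ``time-to-hit'' correction $\tau(s_1)$ with $\tau(0)=0$, where $\phi^t$ denotes the flow of $v$.

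The key step is to differentiate this relation at $s_1 = 0$. Applying $D_{x_1}\psi$ to the unit tangent vector $\sigma_1'(0)$ of $T_1$ gives a vector $w := D_{x_1}\psi \cdot \sigma_1'(0)$ based at $x_2$. On the other hand, differentiating the right-hand side gives $w = h'(0)\,\sigma_2'(0) + \tau'(0)\, v(x_2)$; that is, $w$ decomposes into a component along $T_2$ (of size $h'(0)$, since $\sigma_2'(0)$ is a unit vector) plus a component along the flow direction $v(x_2)$. Projecting both sides onto the direction normal to $T_2$ kills the $v(x_2)$ term (up to the factor $v_{n,2}$, the normal component of $v(x_2)$, which is nonzero by transversality) and isolates $h'(0)$: concretely, if $n_2$ is the unit normal to $T_2$, then $\langle w, n_2\rangle = h'(0)\langle \sigma_2'(0), n_2\rangle + \tau'(0) v_{n,2}$. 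Since $\sigma_2'(0) \perp n_2$ this reads $\langle w, n_2\rangle = \tau'(0) v_{n,2}$, which is not yet what we want — so instead I would do the cleaner bookkeeping: decompose $w$ in the basis $\{\sigma_2'(0), v(x_2)/v_{n,2}\}$ adapted to $T_2$, read off that its $\sigma_2'(0)$-coordinate equals $h'(0)$, and relate the area of the parallelogram spanned by $\{w, v(x_2)\}$ to $\det D_{x_1}\psi$ times the area of the parallelogram spanned by $\{\sigma_1'(0), v(x_1)\}$.

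More precisely: $\det D_{x_1}\psi$ equals the ratio of the signed area of the image parallelogram $\{D_{x_1}\psi\,\sigma_1'(0),\ D_{x_1}\psi\, v(x_1)\} = \{w,\ v(x_2)\}$ (using that $v$ is flow-invariant, $D_{x_1}\psi\, v(x_1) = v(x_2)$) to that of $\{\sigma_1'(0),\ v(x_1)\}$. The latter has area $v_{n,1}$ (base a unit vector along $T_1$, height the normal component of $v(x_1)$); the former has area $h'(0)\, v_{n,2}$, because only the $T_2$-component of $w$, namely $h'(0)\,\sigma_2'(0)$, contributes a nonzero cross product with $v(x_2)$, and $|\sigma_2'(0) \times v(x_2)| = v_{n,2}$. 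Hence $\det D_{x_1}\psi = h'(0)\, v_{n,2}/v_{n,1}$, and combining with Lemma~\ref{l:div} gives $h'(0) = \frac{v_{n,1}}{v_{n,2}}\exp\!\big(\int_{t_1}^{t_2}\Div v\,dt\big)$, as claimed; the orientation hypothesis on $T_1, T_2$ ensures all signs are positive. The main obstacle — really the only subtlety — is setting up the implicit characterization of $h$ carefully enough that the flow-direction term is cleanly separated; once the parallelogram-area interpretation of the Jacobian determinant is in place, the rest is a one-line cross-product computation.
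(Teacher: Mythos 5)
Your proposal is correct and follows essentially the same route as the paper's formal proof: both use Liouville's theorem for $\det D_{x_1}\psi$, both exploit that $D_{x_1}\psi$ sends $v(x_1)$ to $v(x_2)$, and both read off $h'(0)$ from the Jacobian expressed in a basis adapted to the transversal and the flow direction. Your ``parallelogram area'' phrasing ($\det D\psi = \tfrac{h'(0)v_{n,2}}{v_{n,1}}$ via cross products with $v$) is just a coordinate-free repackaging of the paper's computation of the $(2,2)$ entry of $D_{x_1}\psi$ in the basis $\{v/v_n,\ \text{unit tangent}\}$.
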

\begin{proof}[Informal proof]
Consider stationary flow of compressible fluid with velocity field $v$.
Let $\Delta I_1 \subset T_1$ be a small interval covering $x_1$, let $\Delta I_2 = h(\Delta I_1)$. Denote by $\mathcal T$ the corresponding trajectory tube between $\Delta I_1$ and $\Delta I_2$.

Take some small time $\Delta t$. During this time, the volume flowing into $\mathcal T$ will be $\Delta V_1 = \Delta t \cdot v_{n, 1} |\Delta I_1|$, and the volume flowing out of $\mathcal T$ will be $\Delta V_2 = \Delta t \cdot v_{n, 2} |\Delta I_2|$.
Small volume $\Delta V$ of liquid entering $\mathcal T$ exits it having the volume $J \Delta V$, where
\[
    J = \exp\bigg(\int_{t_1}^{t_2} \Div v \; dt\bigg),
\]
due to Lemma~\ref{l:div}.
Thus, the outgoing volume is $J$ times the incoming volume:
\[
    \Delta V_2 = J \Delta V_1, \ \ \hbox{or} \ \
    \Delta t \cdot v_{n, 2} |\Delta I_2| = J \Delta t \cdot v_{n, 1} |\Delta I_1|.
\]
This yields
\[
    h'(0) = \frac{|\Delta I_2|}{|\Delta I_1|} = J \frac{v_{n, 1}}{v_{n, 2}}.
\]

\end{proof}

\begin{proof}[Formal proof] Recall that $\psi$ denotes the time $t_2-t_1$ flow, $\psi(x_1) = x_2$, and $D_{x_1}\psi: T_{x_1} \mathbb R^2 \mapsto T_{x_1} \mathbb R^2$ denotes the differential of $\psi$ at $x_1$.
Let us pick the   basis of $T_{x_1}$ consisting of the vector $u_1 = v(x_1)/v_{n, 1}$ and of    $u_2=$ a unit vector tangent to $T_1$ chosen so that the basis is right-handed.
We pick the basis $ w_1, \ w_2 $ in $T_{x_2}$ in the same way.
Note that $ \det([u_1, u_2]) = 1$ and $\det([w_1, w_2]) = 1$.
This means that the determinant of $D_{x_1} \psi$ written in these bases will be
\[
    J = \exp\bigg(\int_{t_1}^{t_2} \Div v \; dt\bigg)
\]
by Lemma~\ref{l:div}.
As $D_{x_1} \psi$ maps $v(x_1)$ into $v(x_2)$, it maps $u_1 = v(x_1)/v_{n, 1}$ into $\frac{v_{n, 2}}{v_{n, 1}} w_1$. As $\det (D_{x_1} \psi) = J$, this means
\[
D_{x_1} \psi
=
\begin{pmatrix}
\frac{v_{n, 2}}{v_{n, 1}} & * \\
0 & \frac{J v_{n, 1}}{v_{n, 2}}
\end{pmatrix}.
\]
Now, let $\phi: T_1 \mapsto \mathbb R^2$ be the embedding and let $\pi: \mathbb R^2 \mapsto T_2$ be the projection onto $T_2$ along the trajectories of $v$. Using the bases in $T_{x_1}$ and $T_{x_2}$ chosen above, we have
\[
D_0 \phi: \mathbb R \mapsto T_{x_1}
=
\begin{pmatrix}
0 \\
1
\end{pmatrix},
\qquad
D_{x_2} \pi: T_{x_2} \mapsto \mathbb R
=
\begin{pmatrix}
0 & 1
\end{pmatrix}.
\]
We can write the Poincare map as $h = \pi \circ \psi \circ \phi$, and
\[
h' = D_{x_2} \pi \cdot D_{x_1} \psi \cdot D_0 \phi = \frac{J v_{n, 1}}{v_{n, 2}}.
\]
\end{proof}

Next, let us prove a lemma describing the integral of the divergence along trajectories realizing the \poincare map for our particular system~\eqref{e:perturbed}. As $\Div {\bf v} = 0$, this divergence is $\eps \Div {\bf f}$.
\begin{lemma} \label{e:int-div-poinc}
  For small enough $a$ and $b$ the following holds for all small enough $\eps$.
  Consider a solution of~\eqref{e:perturbed} starting at $t=t_0$  on the transversal $T$ defined in Section~\ref{s:poinc}, and let   $t_1$ be the time of first return to $T$. Then
  \begin{equation} \label{e:int-div-ineq}
    \int_{t_0}^{t_1} \Div {\bf f} \; dt < -C_d|\ln b| < 0,
  \end{equation}
  where the integral is taken along the  solution, and the positive constant $C_d$ does not depend on $\eps$, chosen initial data on $T$ and the parameters $a$ and $b$.
\end{lemma}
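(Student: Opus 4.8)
The plan is to show that the orbit ${\bf x}(t)$, $t\in[t_0,t_1]$, realising one return of the \poincare map spends time of order $|\ln b|$ inside small neighbourhoods of the two saddles of the perturbed flow~\eqref{e:perturbed}, that on those neighbourhoods $\Div{\bf f}\le -1$, and that the contribution of the rest of the orbit to $\int_{t_0}^{t_1}\Div{\bf f}\,dt$ is only $O(1)$. Fix a small $r_0>0$, independent of $a,b,\eps$; let $\sigma_1,\sigma_2$ be the saddles of~\eqref{e:perturbed} provided by the first part of Proposition~\ref{p:circle-maps-family}; and split $[t_0,t_1]=A\sqcup B$, where $A=\{t:{\bf x}(t)\in B_{r_0}(\sigma_1)\cup B_{r_0}(\sigma_2)\}$.

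\textbf{Estimate on $B$.} By~\eqref{e:div}, $|\Div{\bf f}|\le 3$ once $\eps$ is small, so it suffices to prove $|B|=O(1)$. The orbit stays in the slanted band $\{\,|by-ax-h|\le 1+O(\eps)\,\}$ with $h:=H({\bf x}(t_0))$, because $by-ax=H-\cos x\cos y$ and $H$ drifts by only $O(\eps)$ along~\eqref{e:perturbed} (Remark~\ref{r:change-H-one-wind}); together with the fact that the net displacement in $x$ is exactly $\pi$ and that $a/b\le 1.5$, this confines the orbit to a bounded set, in which it passes boundedly many saddle neighbourhoods and runs an arc of length $O(1)$ between consecutive ones (it shadows the corresponding level curve of the unperturbed Hamiltonian — cf.\ the ``chess'' picture of Section~\ref{s:Ham}). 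On $B$ the orbit avoids the $B_{r_0}(\sigma_i)$ and also stays a fixed distance from the two sources, since it lies outside the outward-split homoclinic loops (Corollary~\ref{c:int-div-loop}) and those loops are bounded away from the sources (the value of $H$ at a source differs from the value at the cell's saddle by a quantity of order $1$). Hence $|{\bf v}+\eps{\bf f}|\ge c_0>0$ on $B$, so $|B|\le(\text{arc length on }B)/c_0=O(1)$ and $\bigl|\int_B\Div{\bf f}\,dt\bigr|=O(1)$.

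\textbf{Estimate on $A$.} Each $\sigma_i$ lies $O(a+b)$-close to a node of the lattice $(\tfrac\pi2+\pi\mathbb Z)^2$, where $\cos2x=\cos2y=-1+O((a+b)^2)$; so by~\eqref{e:div}, $\Div{\bf f}=\cos2x+\cos2y+O(\eps)\le -1$ on $B_{r_0}(\sigma_i)$ once $r_0,a,b,\eps$ are small, whence $\int_A\Div{\bf f}\,dt\le -|A|$. It remains to prove $|A|\ge C_0|\ln b|$ with $C_0>0$ absolute. To return to $T$ the orbit must thread past an (opened) homoclinic loop, so it enters some $B_{r_0}(\sigma)$, $\sigma\in\{\sigma_1,\sigma_2\}$. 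During this passage $H$ stays within $o(b)$ of $h$ (its drift is $O(\eps)\ll b$), and $|h-H_\sigma|=O(b)$: in the ``chess'' description the orbit follows a corridor bounded by the lines $by-ax=h\mp K$, and the grid node nearest $\sigma$ is at value-distance $\le O(b)$ from one of these lines — neighbouring grid nodes differ by $\pm\pi a$ or $\pm\pi b$ in the value of $by-ax$ — and that distance equals $|H_\sigma-h|$ by Lemma~\ref{l:Hamilt-at-saddles} (where $H$ at a saddle is $by_\Gamma-ax_\Gamma\pm K$). Near $\sigma$ the perturbed flow linearises with eigenvalues $\pm\lambda$, $\lambda\in[\tfrac12,2]$ (being $O(a+b)+O(\eps)$-close to the unperturbed value $1$); writing $H-H_\sigma$ as a nonzero constant times the product of the hyperbolic coordinates, an orbit entering $B_{r_0}(\sigma)$ with unstable coordinate $O(b)$ and leaving with it $\asymp r_0$ spends time $\ge\tfrac1\lambda\ln\frac{c\,r_0^2}{\,|h-H_\sigma|+O(\eps)\,}\ge\tfrac{1}{2\lambda}|\ln b|+O(1)\ge C_0|\ln b|$ there, for $b$ small. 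Since this passage lies in $A$, $|A|\ge C_0|\ln b|$. Combining the two estimates, $\int_{t_0}^{t_1}\Div{\bf f}\,dt=\int_A+\int_B\le -C_0|\ln b|+O(1)\le-\tfrac{C_0}{2}|\ln b|=-C_d|\ln b|$ for $b$ small, with $C_d:=C_0/2$ independent of $\eps$, of the initial point, and of $a,b$.

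The main obstacle is the claim $|h-H_\sigma|=O(b)$, i.e.\ that every first-return orbit squeezes past a homoclinic loop \emph{through a hyperbolic bottleneck at a saddle whose Hamiltonian value is within $O(b)$ of the orbit's own}. This draws on the combinatorial structure of the unperturbed separatrix net behind the ``chess'' game and Lemma~\ref{l:ham-sep} (the opened loops together with the transversal cut the slanted band into cells whose only exits are bottlenecks at saddles), on the $O(b)$ spacing of saddle values within one vertical strip coming from Lemma~\ref{l:Hamilt-at-saddles}, and on keeping the $O(\eps)$-shadowing of the perturbed orbit along the net under control uniformly over all initial points. Turning ``$o(1)$-close to a saddle'' into ``$O(b)$-close in the value of $H$'' is the quantitative heart of the argument and is what produces the precise $|\ln b|$ rate; one could also bypass part of the combinatorics by showing directly that the opened loops and the transversal disconnect the slanted band, forcing the orbit through a neighbourhood $B_{r_0}(\sigma)$ with $|h-H_\sigma|=O(b)$.
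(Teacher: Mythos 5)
Your proof follows a genuinely different route from the paper's, and it would work if completed, but there is an acknowledged gap at its quantitative heart.

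The paper establishes the $|\ln b|$ rate by a one-line Gronwall comparison on the $x$-coordinate alone: writing $w=x+\tfrac\pi2$, one has $\dot w=-\sin w\sin y+b+O(\eps)\le w+O(b)$, and the comparison solution $w(t)=O(b)(e^{t-t_0}-1)$ takes time $\ge|\ln b|$ to reach $w=\pi$; thus $t_1-t_0>|\ln b|$ before any saddle analysis is done. The paper then argues that the \emph{time average} of $\Div{\bf f}$ is bounded above by a negative constant uniformly in $b$: the positive-divergence ``central squares'' are crossed only through their $O(\gamma_0)$ corners in $O(1)$ time, and the remaining time has $\Div{\bf f}\le 0$ with a uniformly negative bound on at least half of it. The estimate on the integral then follows by multiplying the two bounds. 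Your proof instead decomposes $[t_0,t_1]$ into a near-saddle part $A$ and the complement $B$, shows $|B|=O(1)$ and $\Div{\bf f}\le -1$ on $A$, and derives $|A|\gtrsim|\ln b|$ from a hyperbolic passage-time estimate, which requires the orbit to bottleneck at a saddle $\sigma$ with $|h-H_\sigma|=O(b)$.

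This is a real difference: the paper's lower bound on the return time is extracted purely from the horizontal slowness $\dot x=b$ at the transversal, whereas yours is extracted from the hyperbolic dwell time at a saddle. What the Gronwall route buys is that it sidesteps exactly the combinatorial claim you flag as the ``main obstacle'' — that the first-return orbit necessarily passes a saddle whose Hamiltonian value lies within $O(b)$ of the orbit's own $h$, uniformly over all initial points and all allowed $a/b$. Your sketch of this claim (via the chess picture and the $O(b)$ spacing of saddle values from Lemma~\ref{l:Hamilt-at-saddles}) is plausible, but as you yourself note it is not a proof: one would need to show that the opened loops together with $T$ really do force the orbit through a bottleneck at the saddle nearest in $H$-value, and to control the $O(\eps)$ shadowing uniformly. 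Until that is closed, your argument does not yet yield the inequality. Your peripheral estimates — $|B|=O(1)$ via bounded arc length and speed bounded below away from saddles and sources (essentially Lemma~\ref{l:bounded-length}), and $\Div{\bf f}\le-1$ on $B_{r_0}(\sigma_i)$ from~\eqref{e:div} — are fine. If you want to keep the decomposition-into-$A$-and-$B$ structure, the cleanest fix is to import the paper's Gronwall bound $t_1-t_0>|\ln b|$, which together with your $|B|=O(1)$ immediately gives $|A|\ge|\ln b|-O(1)$ without any saddle-value arithmetic.
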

\begin{proof}
First, we need to establish the estimate
\[
    t_1 - t_0 > |\ln b|.
\]
Consider solution of~\eqref{e:perturbed} starting on $T$ (more precisely, on its lift to $\mathbb R^2$) with $x = -\frac \pi 2$.
Set $w(t) = x(t)+\frac \pi 2$, where $x(t)$ is the first coordinate of this solution.
By~\eqref{e:fluid-ode} we have
\[
\dot w = - \cos\Big(w - \frac \pi 2\Big)\sin(y) + b + O(\eps)
<
\Big|\cos\Big(w - \frac \pi 2\Big)\Big| + \frac b 2
<
w + \frac b 2.
\]
We have $w(t_0)=0$; the corresponding solution of $\dot w = w + \frac b 2$ is $w(t) = \frac b 2 (e^{t-t_0}-1)$. Solution of~\eqref{e:perturbed} returns to $T$ when $w=\pi$.
This gives
\[
  t_1-t_0 > \ln(2\pi/b + 1) > \ln (1 / b) = |\ln b|.
  \]

  Getting back to estimating the integral of the divergence, we will use formula~\eqref{e:div}:
  \begin{equation*}
  \Div {\bf f} = \cos (2x) + \cos (2y) + O(\eps) = 2\cos(x+y)\cos(y-x) + O(\eps).
\end{equation*}
The leading term $2\cos(x+y)\cos(y-x)$ is $\pi$-periodic, positive in the squares surrounding cell centers with the vertices at the midpoints of the sides of the cell (for the cell $[-\frac \pi 2, \frac \pi 2]^2$ this square is $\{|x|+|y| < \frac \pi 2\}$), and negative outside these squares. Let us call the squares where $2\cos(x+y)\cos(y-x)$ is positive \emph{central squares}.
We will use that $a$ and $b$ are small enough -- as assumed in the beginning of Appendix~\ref{a:perturbed}, we have $a, b < \gamma_0$ with $\gamma_0$ as small as needed. Then trajectories whose backward orbit is captured inside the former separatrix loops of the unperturbed system occupy most of the central squares, and only their corners of size $O(\gamma_0)$ can intersect trajectories of the perturbed system that realize \poincare map.
These corners are crossed during the time $O(\gamma_0)$; for the rest of the   time the divergence is negative, and at least half of that time the divergence is bounded away from zero.
Hence, the time average of the divergence is bounded from above by a negative constant; together with the estimate on the time $t_1-t_0$ this gives~\eqref{e:int-div-ineq}.

\end{proof}

Now, we are ready to prove the contraction of the first return map.
\begin{proof}[Proof of Lemma~\ref{l:contract}]
We use the formula~\eqref{e:h-div} for the derivative of $P$ applied to the vector field $v = {\bf v} + \eps {\bf f}$. As this formula uses natural parameter on the transversal, it can be applied to $P$ written using the coordinate $y$, which we denote $\hat P$. In~\eqref{e:h-div}, we have
$\frac{v_{n, 1}}{v_{n, 2}} < 1 + 3 \eps$: restricted to the transversal $T$ where $x=\frac \pi 2 + \pi k$, the normal component of ${\bf v}$ is $b$ and the normal component of $\eps {\bf f}$ is $-\eps b\sin(x)\sin(y)$ by~\eqref{e:f}, which is bounded by $\eps b$.
As $\Div {\bf v} = 0$, the divergence of $v$ is $\eps \Div {\bf f}$, and we get
\begin{equation*}
    \frac{d}{dz} P = \frac{d}{dy} \hat P < (1 + 3\eps) \exp\bigg(\eps \int_{t_1}^{t_2} \Div {\bf f} \; dt\bigg),
\end{equation*}
where the integral is along the trajectory of~\eqref{e:perturbed} connecting $y$ with $P(y)$. The integral is bounded from above by $-C_d |\ln b|$ according to Lemma~\ref{e:int-div-poinc}, so we get
\[
    \frac{d}{dz} P < (1 + 3 \eps)\exp(1-\eps C_d |\ln b|) < (1+3 \eps)(1 - 0.5 \eps C_d |\ln b|) < 1-\eps,
\]
as $b$ is small enough.
\end{proof}

\begin{remark} \label{r:sing-derivative}
   The derivative of the inverse-time \poincare map $Q$ has infinite singularities at the boundaries of each flat spot.
\end{remark}
\begin{proof}
  The jumps of the forward-time \poincare map $P$ happen at the points $b_1$ and $b_2$ (Figure~\ref{f:homoclinic-open}) where stable manifolds of the two saddles intersect the transversal.
  When the starting point on the transversal approaches $b_1$ or $b_2$, the time spent near the corresponding saddle goes to infinity, and the integral~\eqref{e:int-div-ineq} goes to $-\infty$ as the divergence is negative near both saddles.
  So, proof of Lemma~\ref{l:contract} implies that the derivative of $P$ has zero limit at the jump singularities. For its inverse $Q$, this means the derivative has infinite singularities at the boundaries of the flat spots.
\end{proof}

\subsection{Monotonicity with respect to the parameter} \label{a:monotone}
In this appendix, we prove Lemma~\ref{l:monotone}.
Throughout this subappendix, we will assume that $b$ is fixed; with $a$  variable, we need a notation that reflects the dependence of the vector field on $a$. To that end we
denote by ${\bf w}_a$ the vector field in the RHS of~\eqref{e:perturbed} with given $a$, writing  ${\bf w}_a = {\bf v}_a + \varepsilon {\bf f}_a$.
\begin{figure}[H]
    \centering
    \includegraphics[scale=1]{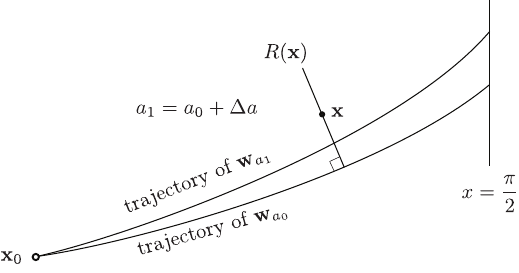}
    \caption{A tube.}
    \label{f:tube}
\end{figure}
\noindent Let us now introduce a technical notion of \emph{tube} (Figure~\ref{f:tube}) that we use to compare trajectories of ${\bf w}_a$ with different values of $a$.
To specify a tube, we fix some $a_0$, an initial point ${\bf x}_0 = (x_0, y_0)$ with $x_0 \in [-\pi/2, \pi/2)$, and $\Delta a$. We always assume that $\Delta a$ is small enough, depending on ${\bf x}_0$.
We also assume that ${\bf x}_0$ is such that its positive ${\bf w}_{a_0}$-orbit intersects the line $\{x=\pi/2\}$. Denote the trajectory of ${\bf x}_0$ by $l_0$. For a point ${\bf x}$ close to $l_0$, denote by $R({\bf x})$ the perpendicular to  $l_0$ passing through ${\bf x}$.
For small enough $\Delta a$ denote $a_1 = a_0 + \Delta a$, ${\bf w}_0 = {\bf w}_{a_0}$ and ${\bf w}_1 = {\bf w}_{a_1}$. Let $l_1$ be the trajectory of ${\bf w}_1$ starting at ${\bf x}_0$; this trajectory
 reaches $\{x=\pi/2\}$ as well if $\Delta a$ is sufficiently small. Let $\mathcal T$ be the \emph{tube} bounded by $l_0$, $l_1$, and the segment of the line $\{x=\pi/2\}$ between $l_0$ and $l_1$.
Note that $l_0$ and $l_1$ may intersect at some points other than ${\bf x}_0$ (the figure does not show such a case).

We assume that $\Delta a$ is so small that
\begin{equation} \label{e:90-deg}
  \text{for any } {\bf x} \in \mathcal T \text{, the angle between } {\bf w}_0({\bf x}) \text{ and } R({\bf x}) \text{ is in }[\pi/2 - 0.01, \pi/2 + 0.01].
\end{equation}

\begin{lemma} \label{l:bounded-length}
  The lengths of all trajectories of~\eqref{e:perturbed} realizing the Poincare map from $x=-\frac \pi 2$ to $\frac \pi 2$ are bounded (uniformly over all trajectories).
\end{lemma}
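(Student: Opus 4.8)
The plan is to reduce to a fixed compact region, parametrise the relevant trajectories by a one‑dimensional parameter, and show that the length is a continuous function of that parameter which has finite one–sided limits at the finitely many exceptional values. By the periodicity~\eqref{e:shifts} of~\eqref{e:perturbed} it suffices to bound the length of the trajectories that run from the left side $\ell^-\subset\{x=-\tfrac\pi2\}$ of the fundamental domain $U$ (the one built in the proof of Corollary~\ref{c:transversal-or-separatrix}) to its right side $\ell^+\subset\{x=\tfrac\pi2\}$. Such a trajectory stays in the bounded set $\bar U$: by~\eqref{e:fluid-ode} and~\eqref{e:f} one has $\dot x>0$ on $\ell^\pm$, so the flow crosses these sides from left to right, and it cannot cross the top and bottom sides of $U$ since those are themselves trajectories. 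Parametrise these trajectories by their intersection point $z$ with $\ell^-$, ranging over a segment $J$. By Corollary~\ref{c:transversal-or-separatrix} the trajectory $l_z$ reaches $\ell^+$ for every $z\in J$ except those lying on a stable manifold of one of the two saddles; as each saddle has only two stable separatrices, this exceptional set is finite, say $\{z_1<\dots<z_k\}$, and $J\setminus\{z_1,\dots,z_k\}$ is a union of at most $k+1$ open subintervals.

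On each such open subinterval I would show that $L(z):=\operatorname{length}(l_z)$ is continuous: $l_z$ depends continuously on $z$, it crosses $\ell^+$ transversally (since $\dot x>0$ there) so the first–hitting time $T(z)$ is finite and continuous, and hence $L(z)=\int_0^{T(z)}|{\bf w}_a(l_z(t))|\,dt$ varies continuously. It then remains to check that $L(z)$ has a finite limit as $z$ tends to an exceptional value $z_i$ from either side, and to an endpoint of $J$. As $z\to z_i$ the trajectory $l_z$ shadows a stable separatrix into the saddle and leaves along one of the two unstable separatrices; the key point is that this unstable separatrix cannot return to a saddle or to a focus — a return to the same saddle is a homoclinic loop, excluded by Corollary~\ref{c:no-periodic-inside-loops}; a return to the other saddle is a heteroclinic connection, excluded in the proof of Corollary~\ref{c:transversal-or-separatrix}; and nothing is forward–asymptotic to an unstable focus — so by \PB and the absence of periodic orbits and graphics in $U$ it must reach $\ell^+$ in finite time, passing near the other saddle only finitely often, and therefore has finite length. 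Together with the analogous and easier analysis at the endpoints of $J$, where $l_z$ converges to a bounded boundary arc of $U$, this shows $L$ extends continuously to the closure of each subinterval, so $L$ is bounded on $J$; by periodicity the length is then bounded uniformly over all trajectories realising the \poincare map.

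The step I expect to require the most care is the claim that $L$ is continuous up to the exceptional values, i.e. that no length ``leaks out'' during the increasingly long passage near the saddle as $z\to z_i$. I would handle this with a local estimate at each saddle: in a small flow box $V$ around a hyperbolic fixed point, the length of the portion of a trajectory lying in $V$ is, as a function of its entry point on $\partial V$, continuous away from the single point of the stable manifold and has finite limits there, hence is bounded by a constant $K_1$ independent of the trajectory (equivalently, independent of $z$). Since the boundaries $\partial V$ can be chosen transversal to the flow, the combinatorial pattern of box–entries of $l_z$ is locally constant in $z$ off the exceptional values and agrees with the finite pattern of the limiting separatrix configuration, so there is no accumulation of box–visits; and outside these boxes $l_z$ follows the neighbouring separatrix configuration for a bounded time and varies continuously. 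Assembling the uniform per–box bound with the bounded‑time contribution from the complement of the boxes yields the continuity of $L$ up to each $z_i$, completing the argument. One should also check that the smallness requirements on the boxes are compatible with the bounds already fixed on $\eps,a,b$, but this is routine.
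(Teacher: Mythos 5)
Your proposal is correct and follows essentially the same route as the paper's (very terse) proof: parametrize the trajectory length by the starting point on the transversal, observe continuity off the finitely many points lying on stable separatrices, and argue that the one-sided limits there are finite because the limiting curve is a finite-length concatenation of separatrix arcs. The paper states the finiteness of the one-sided limits without justification; your flow-box argument around the saddle is the missing detail that makes that step rigorous, so your write-up is a fleshed-out version of the same idea rather than a different approach.
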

\begin{proof}
  Let $l(y)$ be the length of such a trajectory starting at $(-\pi/2, y)$. This function is continuous on the whole circle except the points $b_1$ and $b_2$ (Figure~\ref{f:homoclinic-open}), and at these two points it has one-sided limits equal to the lengths of the "limit" paths along separatrices. Hence, $l(y)$ is bounded.
\end{proof}

\noindent Let $R_{\mathcal T}(x) = R(x) \cap \mathcal T$.
\begin{lemma} \label{l:flux-bound}
There exists $c > 0$ such that, given any tube $\mathcal T$ and $x \in l_0$, the flux of ${\bf w}_0$ through $R_{\mathcal T}(x)= R(x) \cap \mathcal T$ is bounded by $c \Delta a$.
\end{lemma}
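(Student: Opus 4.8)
The plan is to estimate the flux \emph{directly} rather than through the divergence theorem. By~\eqref{e:90-deg} the field ${\bf w}_0$ makes an angle within $0.01$ of a right angle with the segment $R_{\mathcal T}(x)$ at every point of $\mathcal T$, so the flux of ${\bf w}_0$ through $R_{\mathcal T}(x)$ is at most $\int_{R_{\mathcal T}(x)} |{\bf w}_0|\,ds$ in absolute value; it therefore suffices to bound this integral by $c\,\Delta a$ with $c$ independent of the tube and of $x$. Since $R_{\mathcal T}(x)$ is the portion of the perpendicular to $l_0$ through $x$ that lies between $l_0$ and $l_1$, the point is to show that this cross-section is $O(\Delta a)$ long wherever $|{\bf w}_0|$ is bounded below, and that near a saddle, where it may be longer, its length is at most $O(\Delta a)/\sup_{R_{\mathcal T}(x)}|{\bf w}_0|$.

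Two elementary ingredients feed into this. First, ${\bf w}_a$ depends affinely on $a$ up to a bounded $O(\eps)$ term: by~\eqref{e:fluid-ode} and~\eqref{e:f}, $\partial_a {\bf w}_a = (0,1) + \eps\,\partial_a {\bf f}_a$ is uniformly bounded, so $|{\bf w}_1 - {\bf w}_0| = O(\Delta a)$ throughout. Second, by Lemma~\ref{l:bounded-length} both $l_0$ and $l_1$ have uniformly bounded length, so each can pass within a fixed small neighborhood of at most $O(1)$ saddles (each near-saddle passage has length bounded below) and spends uniformly bounded time outside those neighborhoods, where $|{\bf w}_0|$ is bounded below.

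I would then split $l_0$, together with the matching portion of $l_1$, into the $O(1)$ saddle-free arcs and the $O(1)$ near-saddle arcs. On a saddle-free arc the transit time and $\|D{\bf w}_0\|$ are uniformly bounded, so Gronwall's inequality with driving term $|{\bf w}_1 - {\bf w}_0| = O(\Delta a)$ keeps $l_1$ within $O(\Delta a)$ of $l_0$; the cross-section there is $O(\Delta a)$ long and $|{\bf w}_0| = O(1)$, giving an $O(\Delta a)$ contribution. On a near-saddle arc I would pass to a local hyperbolic normal form $\dot\xi = \mu\xi$, $\dot\eta = -\mu\eta$ with the saddle at the origin, $W^s = \{\xi = 0\}$, $W^u = \{\eta = 0\}$, the saddle of ${\bf w}_1$ being $O(\Delta a)$ away; there $l_0$ and $l_1$ are, up to $O(\eps)$, the hyperbola branches $\xi\eta = c_0$ and $(\xi-\alpha)(\eta-\beta) = c_1$ with $\alpha,\beta = O(\Delta a)$ and $|c_1 - c_0| = O(\Delta a)$, the latter because they enter the box $O(\Delta a)$ apart. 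One checks that (i) the branches leave the box again $O(\Delta a)$ apart — the passage does not amplify the endpoint separation, so the arc-by-arc argument iterates with uniform constants — and (ii) at distance $r$ from the saddle the branches are $O(\Delta a/r)$ apart, so the cross-section is $O(\Delta a/r)$ long while $|{\bf w}_0| = O(r)$ there, and the product is $O(\Delta a)$. The hypothesis that $\Delta a$ is small depending on ${\bf x}_0$ is exactly what guarantees $\Delta a$ is much smaller than the distance from $l_0$ to the stable manifolds it approaches, so that this local picture holds. Summing the $O(1)$ contributions gives $\int_{R_{\mathcal T}(x)}|{\bf w}_0|\,ds = O(\Delta a)$.

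The near-saddle estimate (ii) is the main obstacle, and it is also why I would not go through the divergence theorem: applying it to the subtube $\mathcal T_x$ cut off by $R_{\mathcal T}(x)$ rewrites the flux as $\eps\iint_{\mathcal T_x}\Div {\bf f}_0$ plus a boundary term that is $O(\Delta a)$ (using $|{\bf w}_1 - {\bf w}_0| = O(\Delta a)$, the bounded length of $l_1$, and the tangency of ${\bf w}_0$ to $l_0$); but the area of $\mathcal T_x$ can be of order $\Delta a\cdot\log\bigl(1/\mathrm{dist}({\bf x}_0, W^s)\bigr)$, so the bulk term is not manifestly $O(\Delta a)$ uniformly. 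The direct estimate works because it sees the cancellation between the widening of the tube near a saddle and the simultaneous vanishing of ${\bf w}_0$ there.
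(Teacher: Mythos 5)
Your objection to ``going through the divergence theorem'' actually concerns a \emph{global} application (integrating $\eps\Div{\bf f}_0$ over the entire subtube $\mathcal T_x$ in one shot); the paper applies it differently --- \emph{differentially}, to the thin slab between $R_{\mathcal T}(s)$ and $R_{\mathcal T}(s+\Delta s)$ --- so as to produce a Gronwall inequality. Letting $\phi(s)$ be the flux through $R_{\mathcal T}(s)$, the thin-slab divergence theorem gives $\phi(s+\Delta s)-\phi(s)+O(\Delta a\,\Delta s)=\iint\Div{\bf w}_0$, and then two cases: near the saddles $\Div{\bf w}_0<0$ by~\eqref{e:div}, so the bulk term only \emph{decreases} $\phi$ and one gets $\phi'(s)\le O(|\Delta a|)$; away from the saddles $|{\bf w}_0|$ is bounded below, so the width of the tube is $O(\phi)$, the slab area is $O(\phi\,\Delta s)$, and $\phi'(s)\le c_1(|\Delta a|+\phi)$. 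Since the total arc length is bounded (Lemma~\ref{l:bounded-length}), Gronwall finishes the proof. The $\log(1/\mathrm{dist})$ area growth that worried you is precisely what this form neutralizes: that area sits in the region where the divergence has the favorable sign, so it never enters the upper bound.

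Your direct approach --- $\text{width}\times|{\bf w}_0|$ estimates, split into saddle-free and near-saddle arcs, with a hyperbolic normal form near each saddle --- is a genuinely different route, and the heuristic ``separation $\sim\Delta a/r$, field $\sim r$, product $\sim\Delta a$'' captures the right cancellation. But the normal-form step as written has gaps that would need real work. The vector field ${\bf w}_0$ is not Hamiltonian, so its saddle has eigenvalues $\mu_1,-\mu_2$ with $\mu_1\ne\mu_2$ (indeed $\mu_1-\mu_2=\eps\Div{\bf f}+O(\eps^2)\ne 0$), and orbits near it are not level curves of $\xi\eta$ but of $\xi^{\mu_2}\eta^{\mu_1}$; more importantly, the smooth linearization you invoke is not automatic (Hartman--Grobman is only $C^0$), and whatever conjugacy you do use must come with estimates uniform in $a,b,\eps$ and in the closest-approach distance $r_{\min}$, which can be arbitrarily small. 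You would also have to track the divergence contribution (which the normal form $\dot\xi=\mu\xi,\dot\eta=-\mu\eta$ sets to zero) and verify that errors in the conjugacy do not accumulate during the long near-saddle passage time. None of this is impossible, but it is substantially more delicate than the paper's two-line ODE argument; what the Gronwall form buys you is exactly that it works at the level of fluxes and signs without ever needing to resolve the geometry of individual near-saddle passages.
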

\begin{proof}
  Let $s$ be the natural parameter on $l_0$ with $s=0$ corresponding to ${\bf x}_0$, and let $\phi(s)$ be the flux of ${\bf w}_0$ through $R_{\mathcal T}(x)$, where $x \in l_0$ is given by $s$. Note that $\phi \ge 0$ by~\eqref{e:90-deg}.
  Let us show that
  \begin{equation} \label{e:d-phi}
    \frac{\partial}{\partial s} \phi \le c_1 (|\Delta a| + \phi)
  \end{equation}
  for some (uniform) constant $c_1$. This will imply the lemma because $\phi(0)=0$ and the lengths of trajectories $l_0$ starting with $x \in (-\pi/2, \pi/2)$ and crossing $\{x=\pi/2\}$ are uniformely bounded by Lemma~\ref{l:bounded-length}. Indeed, for $\tilde \phi = \phi/|\Delta a|$ equation~\ref{e:d-phi} implies
  $\frac{\partial}{\partial s} \tilde \phi \le c_1 (1 + \tilde \phi)$
  , so $\tilde \phi = O(1)$ and $\phi = O(\Delta a)$.

  To prove~\eqref{e:d-phi}, we take small $\Delta s$ and consider the part of $\mathcal T$ between $R_{\mathcal T}(s)$ and $R_{\mathcal T}(s+\Delta s)$.
  Let us apply the divergence theorem to the flux of ${\bf w}_0$ through the boundary of this domain.
  The flux through the $l_0$ side is zero. The flux of ${\bf w}_1$ through the $l_1$ side is zero, and so the flux of ${\bf w}_0 = {\bf w}_1 + O(\Delta a)$ is $O(\Delta a \Delta s)$.
  According to the divergence theorem we then have   $\phi(s+\Delta s) - \phi(s) + O(\Delta a \Delta s) = \iint \Div {\bf w}_0$.

  Consider now small neighborhoods of the saddles where $\Div {\bf w}_0$ is negative according to~\eqref{e:div}. In these neighborhoods the divergence theorem gives $\frac{\partial}{\partial s} \phi < O(\Delta a)$, which implies~\eqref{e:d-phi}. Outside these neighborhoods we claim that the width of $\mathcal T$ is $O(\phi)$. This is because $|{\bf w}_0|$ is separated from zero and its angle with the two sides of the rectangle formed by transversals is close to the right angle, so that
  the flux $\phi$ through $R_{\mathcal T}$ is of the same order as the width of $\mathcal T$. Thus, the area of the  part of $\mathcal T$ in question is $O(\phi \Delta s)$, and $\iint \Div {\bf w}_0 = O(\phi \Delta s)$, giving~\eqref{e:d-phi}.
\end{proof}

Given a tube, let ${\bf x}'_0$ and ${\bf x}'_1$ be the intersections of $l_0$ and $l_1$ with $\{ x = \pi/2 \}$. Let $\Delta H_0 = H_{a_0}({\bf x}'_0) - H_{a_0}({\bf x}_0)$ and $\Delta H_1 = H_{a_1}({\bf x}'_1) - H_{a_1}({\bf x}_0)$.

\begin{lemma} \label{l:d-da}
  \begin{equation*}
    y({\bf x}'_1) - y({\bf x}'_0) = \Delta a \frac{\pi/2 - x({\bf x}_0)} b + O(\Delta H_1 - \Delta H_0).
  \end{equation*}
\end{lemma}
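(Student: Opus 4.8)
The plan is to use the explicit formula for the Hamiltonian, $H_a(x,y)=\cos x\cos y - ax + by$, together with the single crucial geometric fact that the terminal transversal $\{x=\pi/2\}$ is exactly where the periodic part $\cos x\cos y$ vanishes. Consequently, on that line $H_a$ is an affine function of $y$ alone, $H_a(\pi/2,y)=-a\pi/2+by$, and this is the only place where the $a$-dependence of the endpoint enters in a controlled, linear way. All the non-Hamiltonian character of ${\bf w}_a$ — the fact that $H_a$ is not conserved along $l_0$ or $l_1$ — has already been packaged into the quantities $\Delta H_0$ and $\Delta H_1$, so no estimate on the perturbed flow is needed; the identity will be exact up to the harmless factor $1/b$.

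Concretely, first I would write out $\Delta H_0$ and $\Delta H_1$ using $x({\bf x}'_0)=x({\bf x}'_1)=\pi/2$ and ${\bf x}_0=(x_0,y_0)$:
\[
  \Delta H_0 = -a_0\tfrac{\pi}{2} + b\,y({\bf x}'_0) - \big(\cos x_0\cos y_0 - a_0 x_0 + b y_0\big),
\]
\[
  \Delta H_1 = -a_1\tfrac{\pi}{2} + b\,y({\bf x}'_1) - \big(\cos x_0\cos y_0 - a_1 x_0 + b y_0\big),
\]
where $a_1 = a_0 + \Delta a$. Then I would subtract: the terms $\cos x_0\cos y_0$ and $by_0$ are independent of $a$ and cancel, leaving
\[
  \Delta H_1 - \Delta H_0 = -\Delta a\,\tfrac{\pi}{2} + b\big(y({\bf x}'_1) - y({\bf x}'_0)\big) + \Delta a\, x_0 .
\]
Solving for the displacement in $y$ gives $b\big(y({\bf x}'_1)-y({\bf x}'_0)\big) = \Delta a\,(\tfrac{\pi}{2}-x_0) + (\Delta H_1 - \Delta H_0)$, and dividing by $b$ — legitimate and uniform since $b\ge\delta>0$ under the standing hypotheses of the appendix — yields exactly the claimed formula, with the $O(\cdot)$ absorbing only the bounded factor $1/b$ and with $x_0 = x({\bf x}_0)$.

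There is essentially no obstacle here; the only thing requiring care is the bookkeeping: making sure $\Delta H_0$ is evaluated with $H_{a_0}$ and $\Delta H_1$ with $H_{a_1}$ exactly as in the definition preceding the lemma, that both differences are taken from the common base point ${\bf x}_0$, and that both endpoints ${\bf x}'_0,{\bf x}'_1$ do lie on $\{x=\pi/2\}$ (guaranteed by the tube construction and the assumption that the positive ${\bf w}_{a_0}$- and ${\bf w}_{a_1}$-orbits of ${\bf x}_0$ reach that line). With these consistency checks in place, the identity follows by pure algebra.
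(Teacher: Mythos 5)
Your proof is correct and essentially identical to the paper's: both use that $\cos x\cos y$ vanishes on $\{x=\pi/2\}$ so that $H_a$ is affine in $y$ there, then subtract the two definitions of $\Delta H_i$ and rearrange, with the $1/b$ factor absorbed into the $O(\cdot)$ since $b$ is bounded below. The only cosmetic difference is that the paper introduces intermediate notation $H_0,H_1,H_0',H_1'$ rather than expanding $\Delta H_0,\Delta H_1$ directly.
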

\begin{proof}
  Set $H_0 = H_{a_0} ({\bf x}_0)$ and $H_1 = H_{a_1} ({\bf x}_0)$.
  We have $H_1 - H_0 = -x({\bf x}_0) \Delta a$.
  Set $H'_0 = H_{a_0} ({\bf x}'_0) = H_0 + \Delta H_0$ and $H'_1 = H_{a_1} ({\bf x}'_1) = H_1 + \Delta H_1$.
  We have $H'_1 - H'_0 = -\frac \pi 2 \Delta a  + b (y({\bf x}'_1) - y({\bf x}'_0))$. Thus,
  \[
   b \; (y({\bf x}'_1) - y({\bf x}'_0))
   = H'_1 - H'_0 + \frac \pi 2 \Delta a
   = -x({\bf x}_0) \Delta a + (\Delta H_1 - \Delta H_0) + \frac \pi 2 \Delta a.
  \]
\end{proof}

\begin{lemma} \label{l:Delta-H}
  $\Delta H_0$ is the flux of $\eps {\bf f}_{a_0}$ through $l_0$. Similarly, $\Delta H_1$ is the flux of $\eps {\bf f}_{a_1}$ through $l_1$.
\end{lemma}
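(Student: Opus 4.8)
\textbf{Proof proposal for Lemma~\ref{l:Delta-H}.} The plan is to differentiate $H_{a_0}$ along the trajectory $l_0$, use that the leading part ${\bf v}_{a_0}$ of the flow is Hamiltonian, and then recognize the resulting integral as a flux. Parametrize $l_0$ by the time $t$ of the flow $\dot{\bf x} = {\bf w}_{a_0}({\bf x}) = {\bf v}_{a_0}({\bf x}) + \eps{\bf f}_{a_0}({\bf x})$, with ${\bf x}$ running from ${\bf x}_0$ to ${\bf x}'_0$ over a time interval $[\tau_0, \tau_1]$. Since $a_0$ is a fixed parameter, $H_{a_0}$ is an autonomous function, so
\[
  \frac{d}{dt} H_{a_0}({\bf x}(t)) = \nabla H_{a_0}\cdot \dot{\bf x}
  = \nabla H_{a_0}\cdot{\bf v}_{a_0} + \eps\,\nabla H_{a_0}\cdot{\bf f}_{a_0}.
\]
The first term vanishes identically: ${\bf v}_{a_0} = (\partial_y H_{a_0},\,-\partial_x H_{a_0})$ is the symplectic gradient of $H_{a_0}$, so $\nabla H_{a_0}\cdot {\bf v}_{a_0} = \partial_x H_{a_0}\,\partial_y H_{a_0} - \partial_y H_{a_0}\,\partial_x H_{a_0} = 0$. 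Integrating over $[\tau_0,\tau_1]$ gives
\[
  \Delta H_0 = H_{a_0}({\bf x}'_0) - H_{a_0}({\bf x}_0) = \int_{\tau_0}^{\tau_1} \eps\,\nabla H_{a_0}({\bf x}(t))\cdot{\bf f}_{a_0}({\bf x}(t))\,dt .
\]

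Next I rewrite this as a flux. Let $J$ be rotation by $+\pi/2$, so $J(u_1,u_2)=(-u_2,u_1)$. Along $l_0$ the arc-length element is $ds = |{\bf w}_{a_0}|\,dt$, the unit tangent is ${\bf w}_{a_0}/|{\bf w}_{a_0}|$, and (with the chosen orientation of the normal) the unit normal is $J{\bf w}_{a_0}/|{\bf w}_{a_0}|$. Hence the flux of $\eps{\bf f}_{a_0}$ through $l_0$ is
\[
  \int_{l_0} \eps{\bf f}_{a_0}\cdot\frac{J{\bf w}_{a_0}}{|{\bf w}_{a_0}|}\,ds
  = \int_{\tau_0}^{\tau_1} \eps{\bf f}_{a_0}\cdot J\big({\bf v}_{a_0}+\eps{\bf f}_{a_0}\big)\,dt .
\]
Now $J{\bf v}_{a_0} = \nabla H_{a_0}$, directly from ${\bf v}_{a_0} = (\partial_y H_{a_0},-\partial_x H_{a_0})$, and ${\bf f}_{a_0}\cdot J{\bf f}_{a_0} = 0$ since any planar vector is orthogonal to its own $\pi/2$-rotation. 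So the $\eps^2$ term drops out exactly, and the flux equals $\int_{\tau_0}^{\tau_1}\eps\,\nabla H_{a_0}\cdot{\bf f}_{a_0}\,dt = \Delta H_0$, as claimed. The computation for $l_1$ is word-for-word the same with $a_1$, ${\bf w}_1={\bf v}_{a_1}+\eps{\bf f}_{a_1}$ and $H_{a_1}$ in place of the $a_0$-objects, yielding $\Delta H_1 = \mathrm{flux}\,(\eps{\bf f}_{a_1}\text{ through }l_1)$.

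There is no substantial obstacle; the argument is a two-line computation. The only points needing care are bookkeeping: fixing one orientation of the normal once and for all so that the signs are consistent with the use of $\Delta H_1 - \Delta H_0$ in Lemma~\ref{l:d-da}, and noting that the apparently-present $O(\eps^2)$ cross term ${\bf f}_{a_0}\cdot J{\bf f}_{a_0}$ is not merely small but vanishes identically, so that the identity $\Delta H_0 = \mathrm{flux}\,(\eps{\bf f}_{a_0})$ holds exactly rather than up to higher order.
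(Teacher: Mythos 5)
Your proof is correct and is essentially the paper's own argument run in the opposite direction: the paper starts from the flux integral $\int_l \langle \eps{\bf f}, -J{\bf w}\rangle\,dt$, substitutes ${\bf w}=J\nabla H+\eps{\bf f}$, and collapses it to $\int_l \dot H\,dt$, while you start from $\dot H=\eps\nabla H\cdot{\bf f}$ and expand the flux to match. The key observations (the Hamiltonian term dropping from $\dot H$, the $\eps^2$ cross term $\langle{\bf f},J{\bf f}\rangle$ vanishing identically) are the same in both.
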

\begin{proof}
  Set ${\bf f} = {\bf f}_{a_i}$, ${\bf w}={\bf w}_i$, $\Delta H = \Delta H_i$, and $l=l_i$, where $i=0, \;1$.
  The flux of $\eps {\bf f}$ through $l$ is
  $\int_l \big\langle \eps {\bf f}(t), -J({\bf w})dt \big\rangle$,
  where $t$ is the time, $\langle \cdot , \cdot \rangle$ is the inner product, and $J$ is the rotation by $-\pi/2$. Using  ${\bf w} = J \nabla H + \eps {\bf f}$ we have
  \[
  \int_l \langle \eps {\bf f}, -J(J \nabla H + \eps {\bf f})  \rangle dt
  =
  \underbrace{\int_l \langle \eps {\bf f}, -J^2 \nabla H  \rangle dt}_{\int_l \langle \nabla H, \eps {\bf f}  \rangle dt}
  -
  \underbrace{\int_l \langle \eps {\bf f}, J(\eps {\bf f})  \rangle dt}_{=0}
  =
  \int_l \dot H dt
  =
  \Delta H.
  \]

\end{proof}

\begin{remark} \label{r:change-H-one-wind}
  Together with Lemma~\ref{l:bounded-length}, Lemma~\ref{l:Delta-H} implies that the change of $H$ along a trajectory of~\eqref{e:perturbed} defining the Poincare map is $O(\eps)$.
\end{remark}

\begin{lemma} \label{l:dH-div}
  \[
  \Delta H_1 - \Delta H_0 = \iint_{\mathcal T} \eps \Div {\bf f}_{a_0} + O(\eps \Delta a).
  \]
  Here, the integral over the parts of $\mathcal T$ with $l_0$ above $l_1$ are included with positive sign, and over the parts with $l_0$ below $l_1$ have negative sign.
\end{lemma}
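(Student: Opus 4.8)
The plan is to apply the divergence theorem to the vector field $\eps {\bf f}_{a_0}$ over the tube $\mathcal T$. Its boundary consists of three arcs: the trajectory $l_0$ of ${\bf w}_0$, the trajectory $l_1$ of ${\bf w}_1$, and the cap $\sigma \subset \{x = \pi/2\}$ joining ${\bf x}'_0$ to ${\bf x}'_1$. Since $l_0$ and $l_1$ may cross, $\mathcal T$ must be treated as a signed region: I would split it at the (finitely many, transversal for generic $\Delta a$) intersection points of $l_0$ and $l_1$ into simple subdomains, orient each according to the sign convention in the statement, apply Green's theorem on each, and re-sum. This yields
\[
  \iint_{\mathcal T} \eps \Div {\bf f}_{a_0} \,dx\,dy = \Phi(l_1) - \Phi(l_0) + \Phi(\sigma),
\]
where $\Phi(\cdot)$ denotes the flux of $\eps {\bf f}_{a_0}$ through the indicated arc, with orientations matching those of Lemma~\ref{l:Delta-H}.

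Next I would evaluate the three fluxes. The flux $\Phi(l_0)$ is exactly $\Delta H_0$ by Lemma~\ref{l:Delta-H}. For $\Phi(l_1)$: since $a \mapsto {\bf f}_a$ is smooth we have ${\bf f}_{a_1} = {\bf f}_{a_0} + O(\Delta a)$ uniformly, and $l_1$ has uniformly bounded length by Lemma~\ref{l:bounded-length}, so $\Phi(l_1)$ differs from the flux of $\eps {\bf f}_{a_1}$ through $l_1$ by $O(\eps \Delta a)$, and the latter equals $\Delta H_1$ again by Lemma~\ref{l:Delta-H}. Hence $\iint_{\mathcal T} \eps \Div {\bf f}_{a_0} = \Delta H_1 - \Delta H_0 - \Phi(\sigma) + O(\eps \Delta a)$, and it remains to show $\Phi(\sigma) = O(\eps \Delta a)$.

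On the cap $\sigma \subset \{x=\pi/2\}$ the field $\eps {\bf f}_{a_0}$ is $O(\eps)$, and $|\sigma| = |y({\bf x}'_1) - y({\bf x}'_0)|$, so $\Phi(\sigma) = O(\eps |\sigma|)$. By Lemma~\ref{l:d-da}, $|\sigma| = O(\Delta a) + O(|\Delta H_1 - \Delta H_0|)$, and by Remark~\ref{r:change-H-one-wind} both $\Delta H_0$ and $\Delta H_1$ are $O(\eps)$, so a priori $\Phi(\sigma) = O(\eps \Delta a + \eps^2)$ only. To sharpen this I would bootstrap: continuous dependence on the parameter (Gronwall, exactly as in the width estimate inside the proof of Lemma~\ref{l:flux-bound}) gives that the width of $\mathcal T$ is $O(\Delta a)$, hence $\iint_{\mathcal T} \eps \Div {\bf f}_{a_0} = O(\eps \Delta a)$; combining this with the identity above gives $|\Delta H_1 - \Delta H_0| \le O(\eps \Delta a) + O(\eps |\sigma|)$, and feeding back $|\sigma| = O(\Delta a + |\Delta H_1 - \Delta H_0|)$ yields $(1-O(\eps))|\Delta H_1 - \Delta H_0| \le O(\eps \Delta a)$; for $\eps$ small this forces $|\Delta H_1 - \Delta H_0| = O(\eps \Delta a)$, hence $|\sigma| = O(\Delta a)$ and $\Phi(\sigma) = O(\eps \Delta a)$. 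Substituting back gives the claimed identity.

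I expect the main obstacle to be the orientation and sign bookkeeping in the case where $l_0$ and $l_1$ overlap — this is precisely what the signed-region convention in the statement is designed to handle, and the clean route is the decomposition-along-intersections argument above, together with checking that the implied constants are uniform over admissible tubes and initial points (which is where the uniform bounds of Lemmas~\ref{l:bounded-length} and~\ref{l:flux-bound} enter). The analytic content — identifying each boundary flux with a change of $H$ — is immediate from Lemma~\ref{l:Delta-H}.
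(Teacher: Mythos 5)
Your overall structure matches the paper's proof: apply the divergence theorem to $\eps {\bf f}_{a_0}$ over the (signed) region $\mathcal T$, identify the flux through $l_0$ with $\Delta H_0$ via Lemma~\ref{l:Delta-H}, approximate ${\bf f}_{a_1} = {\bf f}_{a_0} + O(\Delta a)$ on $l_1$ (with bounded length from Lemma~\ref{l:bounded-length}) to get $\Phi(l_1) = \Delta H_1 + O(\eps\Delta a)$, and reduce everything to estimating the cap flux $\Phi(\sigma)$. Up to that point you are in step with the paper.

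The gap is in your bootstrap for $\Phi(\sigma)$. You assert that ``the width of $\mathcal T$ is $O(\Delta a)$, hence $\iint_{\mathcal T} \eps \Div {\bf f}_{a_0} = O(\eps\Delta a)$.'' Both claims are false in general: Lemma~\ref{l:flux-bound} bounds the \emph{flux} of ${\bf w}_0$ through $R_{\mathcal T}(x)$ by $O(\Delta a)$, not the \emph{width}, and the width is the flux divided by $|{\bf w}_0|$. Near a saddle $|{\bf w}_0|$ is only of order $\eps$, so the width there can be as large as $O(\Delta a/\eps)$ (this is exactly why the later Lemmas~\ref{l:area-1},~\ref{l:area-2} need a separate, more delicate treatment of the $\sqrt{\eps}$-neighborhood of the saddle and only reach the weaker bound $O(\sqrt{\eps}\,\Delta a)$, not $O(\eps\Delta a)$). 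The Gronwall argument you gesture at also does not give a uniform $O(\Delta a)$ width, because near a saddle the time of passage is unbounded. The correct rough bound is $\iint_{\mathcal T} \eps \Div {\bf f}_{a_0} = O(\Delta a)$, not $O(\eps\Delta a)$.

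The paper avoids this entirely. It does not need any global area or width estimate for $\mathcal T$ to control $\Phi(\sigma)$: the cap lies on the single transversal $\{x=\pi/2\}$, where ${\bf w}_0$ is uniformly close to the constant vector $(b,a)$ and hence bounded away from zero. Lemma~\ref{l:flux-bound} applied at $x=\pi/2$ immediately gives that the width of $\mathcal T$ \emph{there} is $O(\Delta a)$, hence $|\sigma| = O(\Delta a)$ and $\Phi(\sigma) = O(\eps\Delta a)$, no bootstrap needed. Your bootstrap can actually be made to work if you replace the erroneous $\iint = O(\eps\Delta a)$ by the correct $\iint = O(\Delta a)$ (area $\le$ width $\times$ length $= O(\Delta a/\eps)$, times the factor $\eps$); then the identity gives $|\Delta H_1-\Delta H_0| \le O(\Delta a) + O(\eps)|\Delta H_1-\Delta H_0|$, hence $|\Delta H_1-\Delta H_0| = O(\Delta a)$, hence $|\sigma| = O(\Delta a)$ and $\Phi(\sigma) = O(\eps\Delta a)$. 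But as written the argument is broken, and the paper's one-line observation about the vector field at $x=\pi/2$ is both simpler and sharper.
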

\begin{proof}
  This is the divergence theorem applied to the flux of $\eps {\bf f}_{a_0}$ through the boundary of $\mathcal T$.
  To be more precise, consider a contour formed by $l_1$ oriented forward, followed by the vertical segment connecting $\textbf{x}_1'$ with $\textbf{x}_0'$, and then completed by $l_0$ oriented backward.
  Applying the divergence theorem, we get that the integral of the divergence (with appropriate signs for different parts of $\mathcal T$) is equal to the sum of three fluxes: flux through $l_1$, flux through the vertical segment, and the flux through $l_0$ oriented backward. The last one is equal to $-\Delta H_0$ by Lemma~\ref{l:Delta-H}.
  Since ${\bf f}_{a_1} = {\bf f}_{a_0} + O(\Delta a)$ because ${\bf f} = {\bf f}(x, y, a, b, \eps)$ is $C^1$-smooth by Proposition~\ref{p:fenichel},
  the flux through $l_1$ is $\Delta H_1 + O(\eps \Delta a)$.
  By Lemma~\ref{l:flux-bound} together with the fact that near the line $x = \frac \pi 2$ the vector field ${\bf w}_0$ is close to $(b, a)$, the length of the vertical segment connecting ${\bf x}'_0$ with ${\bf x}'_1$ is $O(\Delta a)$. Thus, the flux of $\eps {\bf f}_{a_0}$ through this segment is $O(\eps \Delta a)$.
  This completes the proof of the lemma.
\end{proof}

\begin{lemma} \label{l:area-1}
  For any $C_1 > 0$ there exists $C_2 > 0$ such that the following holds.
  If the distance between $l_0$ and the saddles of ${\bf w}_0$ is greater than $C_1 \eps$, then
  \[
    |\Delta H_1 - \Delta H_0| < C_2 \sqrt{\eps} \Delta a.
  \]
\end{lemma}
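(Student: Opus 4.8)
The plan is to reduce the claim to an estimate on the area of the tube $\mathcal T$, and then to bound that area by the time it takes the trajectory $l_0$ to cross the strip. By Lemma~\ref{l:dH-div},
\[
 \Delta H_1 - \Delta H_0 = \eps \iint_{\mathcal T} \Div {\bf f}_{a_0}\,dx\,dy + O(\eps \Delta a),
\]
and $|\Div {\bf f}_{a_0}| \le 3$ for $\eps$ small by~\eqref{e:div}, so it suffices to prove $\operatorname{Area}(\mathcal T) = O(|\ln \eps|\,\Delta a)$ with the implied constant depending only on $C_1$; indeed this gives $|\Delta H_1 - \Delta H_0| \le 3\eps \operatorname{Area}(\mathcal T) + O(\eps\Delta a) = O(\eps|\ln\eps|\,\Delta a)$, which is $< C_2 \sqrt\eps\,\Delta a$ once $\eps$ is small (with $C_2$ chosen in terms of $C_1$). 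So the target exponent $\tfrac12$ on $\eps$ is in fact quite generous.

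Next I would bound the area by the flux. Write $\operatorname{Area}(\mathcal T) = \int_{l_0} w(s)\,ds$ (up to a harmless Jacobian factor $1+o(1)$ coming from the curvature of $l_0$), where $s$ is arclength along $l_0$ and $w(s)$ is the length of the cross-section $R_{\mathcal T}(s) = R(s)\cap \mathcal T$. By Lemma~\ref{l:flux-bound} the flux of ${\bf w}_0$ across $R_{\mathcal T}(s)$ is at most $c\Delta a$, and by~\eqref{e:90-deg} the field ${\bf w}_0$ crosses $R_{\mathcal T}(s)$ at an angle within $0.01$ of $\pi/2$; hence
\[
 w(s) \le \frac{c\,\Delta a}{0.99\,\inf_{R_{\mathcal T}(s)}|{\bf w}_0|}.
\]
Since $l_0$ and $l_1$ issue from the same point ${\bf x}_0$ and ${\bf w}_0, {\bf w}_1$ differ by $O(\Delta a)$, a short bootstrap (permitted because in the application of Lemma~\ref{l:monotone} one takes $\Delta a \to 0$ with $\eps$ fixed) shows that the cross-sections $R_{\mathcal T}(s)$ are short and stay close to $l_0$, so $\inf_{R_{\mathcal T}(s)}|{\bf w}_0| \ge \tfrac12 |{\bf w}_0(s)|$. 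Therefore, using $ds = |{\bf w}_0|\,dt$ along $l_0$ and writing $T$ for the time $l_0$ takes from ${\bf x}_0$ to $\{x=\tfrac\pi2\}$,
\[
 \operatorname{Area}(\mathcal T) \le \frac{2c}{0.99}\,\Delta a \int_{l_0}\frac{ds}{|{\bf w}_0(s)|} = \frac{2c}{0.99}\,\Delta a\,T .
\]

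It remains to show $T = O(|\ln\eps|)$, and this is where the hypothesis $\operatorname{dist}(l_0, \{\text{saddles}\}) > C_1 \eps$ enters. The trajectory $l_0$ realizes (a piece of) the \poincare map from $\{x=-\tfrac\pi2\}$ to $\{x=\tfrac\pi2\}$, so by Lemma~\ref{l:bounded-length} it has length $\le L = O(1)$; moreover it cannot enter a Bendixson bag around a source, since the flux through the cross-cut of such a bag points outward (Corollary~\ref{c:int-div-loop}), so $l_0$ stays at a fixed distance $\rho_0 > 0$ from the two sources. On the part of $l_0$ at distance $\ge \rho_0$ from every saddle and every source, $|{\bf w}_0|$ is bounded below by a positive constant, so that part contributes time $O(L) = O(1)$. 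On the part within distance $\rho_0$ of a saddle, $l_0$ stays at distance $\ge C_1\eps$ from it; using the $C^1$ linearization at the hyperbolic saddle, a trajectory passing a hyperbolic fixed point no closer than $C_1\eps$ spends time $O(|\ln(C_1\eps)|) = O(|\ln\eps|)$ in a fixed neighborhood of it. Since only finitely many saddles lie within $O(1)$ of the bounded-length curve $l_0$ and $l_0$ meets a fixed neighborhood of each of them only boundedly many times (hyperbolicity), summing gives $T = O(|\ln\eps|)$, and the proof is complete.

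I expect the main obstacle to be this last step, specifically the uniform estimate \emph{the time a trajectory spends within a fixed neighborhood of a hyperbolic saddle, while staying at distance $\ge C_1\eps$ from it, is $O(|\ln\eps|)$}, together with the bound on the number of such visits. Both are handled by linearizing at the saddle: on a fixed neighborhood the flow is conjugate to $(\dot\xi,\dot\eta) = (\lambda\xi, -\mu\eta)$, a trajectory lies on a branch of $\xi\eta = E$ with $|E|$ comparable to the square of its minimal distance $d$ to the saddle, $\dot u = \lambda$ for $u = \tfrac12\ln(\xi/\eta)$ so the entrance and exit are monotone (hence one visit), and the visit lasts a time $\asymp \ln(1/|E|) \asymp \ln(1/d) \le \ln(1/(C_1\eps))$. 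The remaining ingredients — Lemma~\ref{l:dH-div}, Lemma~\ref{l:flux-bound}, the angle condition~\eqref{e:90-deg}, the boundedness of $|l_0|$ and the avoidance of the sources — are cited directly, and the bootstrap controlling the cross-sections is routine.
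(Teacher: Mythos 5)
Your proof is correct, and it shares the paper's overall skeleton: reduce $|\Delta H_1 - \Delta H_0|$ to a bound on $\operatorname{Area}(\mathcal T)$ via Lemma~\ref{l:dH-div}, then bound the transverse width $w(s)$ of the tube by $O(\Delta a/|{\bf w}_0|)$ using Lemma~\ref{l:flux-bound} together with the near-perpendicularity~\eqref{e:90-deg}. Where you diverge is in integrating that width. The paper splits $\mathcal T$ crudely into the part $U_1$ within $\sqrt{\eps}$ of the saddles and the rest $U_2$, uses the uniform lower bounds $|{\bf w}_0|\gtrsim\eps$ on $U_1$ (this is where $\operatorname{dist}(l_0,\text{saddles})>C_1\eps$ enters) and $|{\bf w}_0|\gtrsim\sqrt{\eps}$ on $U_2$, and the fact that $U_1$ has arclength $O(\sqrt{\eps})$, to get $\operatorname{Area}(\mathcal T)=O(\Delta a/\sqrt{\eps})$ with purely geometric input. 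You instead integrate exactly, noting $\int_{l_0}ds/|{\bf w}_0|=T$ is the transit time, and bound $T=O(|\ln\eps|)$ by a hyperbolic passage-time estimate; this yields the strictly sharper $\operatorname{Area}(\mathcal T)=O(\Delta a\,|\ln\eps|)$ and so $|\Delta H_1-\Delta H_0|=O(\eps|\ln\eps|\,\Delta a)$, well inside the stated $O(\sqrt{\eps}\,\Delta a)$. The price is two extra dynamical facts: (i) the $O(|\ln\eps|)$ passage time past a hyperbolic saddle at distance $\ge C_1\eps$, which your linearization argument correctly supplies along with the bounded-number-of-visits remark; and (ii) that $l_0$ stays a fixed distance from the sources, so only the saddles matter. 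Note that (ii) is also needed implicitly by the paper (for $|{\bf w}_0|\gtrsim\sqrt{\eps}$ on $U_2$), so it is not an extra cost of your route; your citation of Corollary~\ref{c:int-div-loop} for this point is slightly imprecise (that corollary concerns the separatrix splitting at saddles, not a bag around the sources), though the underlying geometric fact — that trajectories crossing the transversals cannot enter the forward-invariant region around each focus bounded by the trapped separatrix — is sound. Overall: correct, a sharper bound by a somewhat heavier route, with the paper's two-zone split being the more elementary argument.
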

\begin{proof}
  By Lemma~\ref{l:dH-div} it is enough to show that $\iint_{\mathcal T} \Div {\bf f}_{a_0} = O(\Delta a / \sqrt{\eps})$, or, that the area of $\mathcal T$ is bounded by $O(\Delta a / \sqrt{\eps})$.
  Let $U_1$ be the (possibly empty) part of $\mathcal T$ that is $\sqrt{\eps}$-close to saddles, and set $U_2 = \mathcal T \setminus U_1$.
  The length of the vector field ${\bf w}_0$ is bounded from below (up to a multiplicative constant) by $\sqrt{\eps}$ in $U_2$, and by $\eps$ in $U_1$.
  Thus, by Lemma~\ref{l:flux-bound} and~\eqref{e:90-deg} the width of $\mathcal T$ is $O(\Delta a / \eps)$ in $U_1$, and $O(\Delta a / \sqrt{\eps})$ in $U_2$.
  As the length of $U_1$ is $O(\sqrt{\eps})$, this means that the areas of $U_1$ and $U_2$ are both bounded by $O(\Delta a / \sqrt{\eps})$, as needed.
\end{proof}

Let $\hat P(y)$ be the \poincare map from the transversal $x = -\pi/2$ to the transversal $x = \pi/2$ written using the $y$   coordinate. Note that $\hat P(y)$ is not to be confused with $\tilde P$   whose argument is  $z = \frac{y-x}{2\pi}$ (in Section~\ref{s:poinc}) rather than $ y $.
\begin{corollary} \label{c:d-da-1}
  For any $C_1 > 0$ there exists $C_2 > 0$ such that the following holds.
  If $y$ is such that the trajectory connecting $(-\frac \pi 2, y)$ with $(\frac \pi 2, \hat P(y))$ is $C_1 \eps$-far from the saddles, then
  \[
    \frac{\partial}{\partial a} \hat P(y) = \frac{\pi}{b} + O(\sqrt{\eps}) > 0
  \]
  with $|O(\sqrt{\eps})| < C_2 \sqrt{\eps}$.
\end{corollary}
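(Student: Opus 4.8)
The plan is to read off $\frac{\partial}{\partial a}\hat P(y)$ from the infinitesimal version of the tube estimates, specializing the tube construction to the incoming point $\mathbf x_0 = (-\tfrac\pi2, y)$. First I would fix a $y$ satisfying the hypothesis --- i.e. the trajectory $l_0$ realizing the Poincaré map from $(-\tfrac\pi2,y)$ to $(\tfrac\pi2,\hat P(y))$ stays at distance $>C_1\eps$ from both saddles of $\mathbf w_0$ --- and, for small $\Delta a$, form the tube $\mathcal T$ determined by $a_0=a$, $\mathbf x_0$, and $\Delta a$. Then $y(\mathbf x_0')=\hat P_a(y)$ and $y(\mathbf x_1')=\hat P_{a+\Delta a}(y)$, and Lemma~\ref{l:d-da} with $x(\mathbf x_0)=-\tfrac\pi2$ gives
\[
  \hat P_{a+\Delta a}(y)-\hat P_a(y)
  = \Delta a\,\frac{\pi}{b} + O(\Delta H_1-\Delta H_0).
\]

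Next I would control the error term uniformly. Since $l_0$ is $C_1\eps$-far from the saddles, Lemma~\ref{l:area-1} yields $|\Delta H_1-\Delta H_0| < C_2'\sqrt{\eps}\,\Delta a$ with $C_2'$ depending only on $C_1$, and crucially not on $y$ or $\Delta a$. Substituting,
\[
  \frac{\hat P_{a+\Delta a}(y)-\hat P_a(y)}{\Delta a} = \frac{\pi}{b} + r(\Delta a),
  \qquad |r(\Delta a)| \le C_2\sqrt{\eps},
\]
with $C_2$ depending only on $C_1$, valid for all sufficiently small $\Delta a$ of either sign. The map $\hat P$ depends smoothly on $a$ near such $y$ --- this is the standard smooth dependence of solutions of~\eqref{e:perturbed} on the parameter together with transversality of the flow to $\{x=\tfrac\pi2\}$, which holds since $\dot x = b+O(\eps)>0$ there --- so the difference quotient has a limit $\frac{\partial}{\partial a}\hat P(y)$ as $\Delta a\to 0$. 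Passing to the limit in the displayed bound gives $\frac{\partial}{\partial a}\hat P(y) = \frac\pi b + O(\sqrt\eps)$ with $|O(\sqrt\eps)|\le C_2\sqrt\eps$. Finally, in the regime of Appendix~\ref{a:perturbed} the quantity $\frac\pi b$ is bounded below by a positive constant, so for $\eps$ small enough we have $\frac\pi b - C_2\sqrt\eps > 0$, giving positivity.

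The substantive work here has in fact already been done in the preceding lemmas: Lemma~\ref{l:area-1} (bounding the area of $\mathcal T$ away from the saddles, via the flux bound of Lemma~\ref{l:flux-bound} and the divergence identity of Lemma~\ref{l:dH-div}) and the Hamiltonian bookkeeping of Lemma~\ref{l:d-da}. The only genuinely new points are that the $O(\cdot)$ terms be uniform in $\Delta a$ so the limit $\Delta a\to0$ is legitimate, and that $\hat P$ be differentiable in $a$ at the relevant $y$; neither is a real obstacle, so I do not anticipate a serious difficulty in this corollary.
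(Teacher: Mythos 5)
Your proposal is correct and takes essentially the same route as the paper: specialize the tube to $\mathbf{x}_0 = (-\tfrac{\pi}{2}, y)$, apply Lemma~\ref{l:d-da} with $x(\mathbf{x}_0) = -\tfrac{\pi}{2}$ to get the leading term $\tfrac{\pi}{b}$, control the error via Lemma~\ref{l:area-1}, and pass to the limit $\Delta a \to 0$. The paper's proof is terser (it does not spell out the uniformity in $\Delta a$ or the smooth dependence of $\hat P$ on $a$), but those are exactly the routine points you correctly flag and dispose of.
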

\begin{proof}
  Consider a tube starting at ${\bf x}_0 = (-\pi/2, y)$. We have
  \[
    \frac{\partial}{\partial a} \hat P(y) = \lim_{\Delta a \to 0} \frac{y({\bf x}_1') - y({\bf x}_0')}{\Delta a}.
\]
  By Lemma~\ref{l:d-da}   $\frac{y({\bf x}_1') - y({\bf x}_0')}{\Delta a} = \frac \pi b + O(|\Delta H_1 - \Delta H_0|/\Delta a)$. By Lemma~\ref{l:area-1} this gives the required estimate.
\end{proof}

The argument above implies monotonicity of the \poincare map far from its discontinuities which correspond to the trajectories bumping into the saddles. Let us continue with another argument showing monotonicity near the discontinuity points.
  \begin{figure}[H]
    \begin{center}
    \includegraphics[scale=0.3]{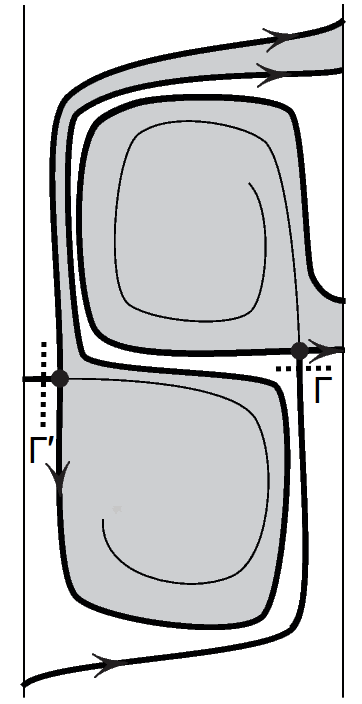}
    \caption{Transversals (dashed) used to study trajectories passing near saddles.}
    \label{f:monot_trans}
    \end{center}
  \end{figure}
\begin{lemma} \label{l:area-2}
  There exist $C_1, C_2 > 0$ such that the following holds.
  Consider a tube starting at a point ${\bf x}_0$ that is $C_1 \eps$-close to a saddle and lies on one of the transversals $\Gamma$ or $\Gamma'$ (Figure~\ref{f:monot_trans}). Then
  \[
    |\Delta H_1 - \Delta H_0| < C_2 \sqrt{\eps} \Delta a.
  \]
\end{lemma}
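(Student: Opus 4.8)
The plan is to follow the proof of Lemma~\ref{l:area-1}. By Lemma~\ref{l:dH-div},
\[
  \Delta H_1 - \Delta H_0 = \eps\iint_{\mathcal T}\Div{\bf f}_{a_0}\;dxdy + O(\eps\Delta a),
\]
and since $|\Div{\bf f}_{a_0}|\le 3$ for small $\eps$ by~\eqref{e:div}, it suffices to show $\mathrm{Area}(\mathcal T) = O(\Delta a/\sqrt\eps)$; then $|\Delta H_1-\Delta H_0| = O(\sqrt\eps\,\Delta a)$, which is the claim (with $C_2$ absorbing the implied constants and $C_1$ large enough that the $C_1\eps$-ball around ${\bf x}_0$ contains the saddle of ${\bf w}_0$). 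The new feature, compared with Lemma~\ref{l:area-1}, is that $\mathcal T$ now reaches the immediate vicinity of a saddle, where $|{\bf w}_0|$ degenerates to order $\eps$ or smaller; the transversals $\Gamma,\Gamma'$ of Figure~\ref{f:monot_trans} — an incoming one, transverse to the stable manifold, and an outgoing one, transverse to the unstable manifold, both at distance $\asymp\eps$ from the saddle of ${\bf w}_0$ — are introduced to isolate this vicinity.

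There are two cases according to which transversal carries ${\bf x}_0$. If ${\bf x}_0\in\Gamma'$, then ${\bf x}_0$ is at distance $\gtrsim\eps$ from the saddle in the unstable direction, and the same estimate as in Lemma~\ref{l:area-1} applies: the width of $\mathcal T$ at a point $x\in l_0$ is $\le C\Delta a/|{\bf w}_0(x)|$ by~\eqref{e:90-deg} together with the sub-tube version of Lemma~\ref{l:flux-bound} (its proof uses only boundedness of the length of $l_0$, still guaranteed by Lemma~\ref{l:bounded-length}), so, using $ds = |{\bf w}_0|\,dt$ along $l_0$ and that we may take $\Delta a$ as small as we like compared with $\eps$ (the lemma is used only in the limit $\Delta a\to 0$, via a corollary analogous to Corollary~\ref{c:d-da-1}),
\[
  \mathrm{Area}(\mathcal T) \le (1+o(1))\,C\Delta a\!\int_{l_0}\!dt.
\]
By hyperbolicity of the saddle of ${\bf w}_0$ (real, nonzero eigenvalues bounded away from $0$ and $\infty$ uniformly in small $\eps$, Corollary~\ref{c:fixed-points}), the forward orbit of ${\bf x}_0$ leaves a fixed neighbourhood of the saddle in time $O(\ln(1/\eps))$ and then reaches $\{x=\pi/2\}$ in time $O(1)$, so the integral is $O(\ln(1/\eps))$ and $\mathrm{Area}(\mathcal T) = O(\Delta a\ln(1/\eps)) = o(\Delta a/\sqrt\eps)$.

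The harder case is ${\bf x}_0\in\Gamma$, where $\mathcal T$ carries the orbit from $\Gamma$ to $\Gamma'$ around the saddle. On this piece the orbit stays within $O(\eps)$ of the saddle, so $l_0$ has arc length $O(\eps)$; but near its closest approach $|{\bf w}_0|$ drops to the minimal distance $d_{\min}$, which for orbits issuing close to the points $b_j$ where $P$ is undefined can be far smaller than $\eps$, and there the flux bound alone gives only width $O(\Delta a/d_{\min})$, so a naive integration over the dyadic annuli between scales $d_{\min}$ and $\eps$ yields $\mathrm{Area}(\mathcal T) = O(\Delta a\ln(\eps/d_{\min}))$, which is not obviously $O(\Delta a/\sqrt\eps)$. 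I would close this by working directly in linearizing coordinates $(\xi,\eta)$ about the saddle of ${\bf w}_0$: there $l_0$ is an explicit hyperbola branch issuing from ${\bf x}_0=(\xi_0,\eta_0)$, and $l_1$ is the orbit of ${\bf w}_1 = {\bf w}_0 + (0,\Delta a) + O(\eps\Delta a)$, which in these coordinates differs from ${\bf w}_0$ only by an $O(\Delta a)$ translation of the fixed point; one then gets a sharper bound on the perpendicular width of $\mathcal T$ near the closest approach (of order $\Delta a\,d_{\min}/\xi_0$ rather than $\Delta a/d_{\min}$) and, crucially, exploits that $\Div{\bf f}_{a_0}$ is essentially constant ($\approx-2$) throughout this neighbourhood, so that the portions of $\mathcal T$ where $l_0$ lies above, respectively below, $l_1$ — entering Lemma~\ref{l:dH-div} with opposite signs — largely cancel. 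I expect this linearized analysis, keeping careful track of the closest-approach geometry and of this sign cancellation, to be the technical heart of the lemma; together with the two previous steps it yields $\mathrm{Area}(\mathcal T) = O(\Delta a/\sqrt\eps)$ and hence the stated estimate.
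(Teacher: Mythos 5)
Your identification of the difficulty is exactly right: near the saddle the tube width is only controlled by $O(\Delta a/|{\bf w}_0|)$, and since the minimal distance $d_{\min}$ of $l_0$ to the saddle is not bounded below by $\eps$ (it goes to $0$ as the starting point on $\Gamma$ approaches the stable manifold), the naive area bound $O(\Delta a\ln(1/d_{\min}))$ has no uniform bound in terms of $\eps$. The $\Gamma'$ half of your argument is sound and essentially delegates to the mechanism of Lemma~\ref{l:area-1}; the gap is the $\Gamma$ half, and you correctly flag it. But your proposed fix --- passing to linearizing coordinates about the saddle and exhibiting cancellation between the portions of the tube where $l_0$ is above vs.\ below $l_1$ --- is a sketch, not a proof, and I believe it would be substantially harder to carry out than the paper's route (you would need to control how the signed areas cancel quantitatively, not just that $\Div{\bf f}_{a_0}$ is nearly constant, and you would need error control on the linearization over a time scale $\ln(1/d_{\min})$ that blows up).

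The paper sidesteps the near-saddle area estimate altogether with a sign argument. Let $\mathcal T_\eps = \mathcal T \cap B(\text{saddle}, C_1\eps)$. Instead of estimating the area of $\mathcal T_\eps$, estimate $\iint_{\mathcal T_\eps}\Div{\bf w}_0$ directly by the divergence theorem on the region of $\mathcal T$ between ${\bf x}_0$ and a transversal $R(x)$ near the far boundary of $\mathcal T_\eps$. The boundary flux splits into flux through $l_0$ (zero, since $l_0$ is a trajectory of ${\bf w}_0$), flux through $l_1$, and flux through $R$. Two sign observations close the argument: (i) $\Div{\bf w}_0 = \eps\Div{\bf f}_{a_0} < 0$ near the saddle by~\eqref{e:div}, so $\iint_{\mathcal T_\eps}\Div{\bf w}_0 < 0$; (ii) the flux through $R$ is nonnegative by~\eqref{e:90-deg}. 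Hence $\big|\iint_{\mathcal T_\eps}\Div{\bf w}_0\big|$ is bounded by the absolute value of the flux of ${\bf w}_0$ through $l_1$ alone, which is $O(\Delta a\,\eps)$ because the flux of ${\bf w}_1$ through $l_1$ is zero, $\|{\bf w}_1-{\bf w}_0\| = O(\Delta a)$, and the arc length of $l_1$ inside the $C_1\eps$-ball is $O(\eps)$ regardless of $d_{\min}$ (a hyperbola-like arc inside a ball of radius $\rho$ has length $O(\rho)$). This gives $\iint_{\mathcal T_\eps}\eps\Div{\bf f}_{a_0} = O(\Delta a\,\eps)$, uniformly in $d_{\min}$, which is far better than needed; the rest of the tube is handled exactly as in Lemma~\ref{l:area-1}. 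The ingredient you are missing is precisely observation (i): exploit that the integrand itself has one sign near the saddle, not that the region can be paired off into cancelling pieces.

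Incidentally, note that the preliminary step in the paper's proof --- that a tube starting $C_1\eps$-close to one saddle stays $C_1\eps$-far from the other saddle, and cannot re-approach the same saddle to within $C_1\eps$ --- is what justifies your use of the Lemma~\ref{l:area-1} bound on $\mathcal T\setminus\mathcal T_\eps$ and your $O(\ln(1/\eps))$ time bound in the $\Gamma'$ case; you implicitly rely on this but should state it.
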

\begin{proof}
 Since the Hamiltonian takes on different values at the saddles,  a tube cannot be $C_1 \eps$-close to both saddles for small enough $\eps$.
  Also, there exists $C_1$ such that any tube starting $C_1 \eps$-close to a saddle cannot be $C_1 \eps$-close to the same saddle when it returns to this saddle. This is because this trajectory upon its return near the saddle cannot be in the gray-shaded zone (Figure~\ref{f:monot_trans}), and by Corollary~\ref{c:int-div-loop} the separatrix loop splits by $O(\eps)$ far from the saddle and thus by $O(\sqrt{\eps})$ near the saddle.

  Arguing as in the proof of Lemma~\ref{l:area-1}, we see that it is enough to show that $\iint_{\mathcal T} \eps \Div {\bf f}_{a_0}=O(\Delta a \sqrt{\eps})$. Denote by $\mathcal T_\eps$ the intersection of $\mathcal T$ with the $C_1\eps$-neighborhood of the saddle. One can show that
  $$\iint_{\mathcal T \setminus \mathcal T_\eps} \eps \Div {\bf f}_{a_0} = O(\Delta a \sqrt{\eps})$$
  as it was done in Lemma~\ref{l:area-1}.
  Now let us estimate the same integral over $\mathcal T_\eps$.
  Since $\Div \eps {\bf f}_{a_0} = \Div {\bf w}_0$, it is enough to estimate $\iint_{\mathcal T_\eps} \Div {\bf w}_0$.
  This can be done by applying the divergence theorem to the flux of ${\bf w}_0$ through the beginning of $\mathcal T$ bounded by some transversal $R(x)$ close to the boundary of $\mathcal T_\eps$ (recall that ${\bf x}_0$ is in $\mathcal T_\eps$).
  As the divergence is negative near the saddles, $\iint_{\mathcal T_\eps} \Div {\bf w}_0 < 0$. On the other hand, this integral is the sum of the fluxes through $l_0$, $l_1$, and $R$. The last of these is positive, so that the absolute value of the integral is bounded by the sum of the absolute values of the first two terms. The first of these is zero since  $l_0$ is a trajectory of ${\bf w}_0$. The second term is $O(\Delta a \eps)$ since the flux of ${\bf w}_1$ through $l_1$ is zero, while $||{\bf w}_1 - {\bf w}_0|| = O(\Delta a)$, and the length of $l_1$ is $O(\eps)$. Thus, the integral over $\mathcal T_\eps$ is $O(\Delta a \eps)$.
\end{proof}

\begin{corollary} \label{c:d-da-2}
  There exit $C_1, C_2 > 0$ such that the following holds.
  If $y$ is such that the trajectory connecting $(-\frac \pi 2, y)$ with $(\frac \pi 2, \hat P(y))$ passes $C_1 \eps$-near one of the saddles, then
  \[
    \frac{\partial}{\partial a} \hat P > \frac{L}{b} + O(\sqrt{\eps}) > 0
  \]
  with $|O(\sqrt{\eps})| < C_2 \sqrt{\eps}$.
  Here, $L = x_L + \frac \pi 2 = \frac \pi 2 - x_R > 0$, where $x_L$ and $x_R$ are the horizontal coordinates of the left and the rigth saddles, respectively.
\end{corollary}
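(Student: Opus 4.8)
\medskip
\noindent\emph{Proof strategy.}
The plan is to run the argument of Corollary~\ref{c:d-da-1}, but, since the trajectory now spends a long time near the saddle $\sigma$ (with abscissa $x_\sigma\in\{x_L,x_R\}$), to cut it at the near-saddle transversals of Figure~\ref{f:monot_trans}. Fix $y$ whose forward trajectory from $(-\tfrac\pi2,y)$ passes $C_1\eps$-close to $\sigma$, and let $\Gamma'$ be the transversal on the unstable side of $\sigma$. Write $\hat P=\hat P_{\mathrm{out}}\circ\hat P_{\mathrm{in}}$, where $\hat P_{\mathrm{in}}:\{x=-\tfrac\pi2\}\to\Gamma'$ carries the point through the slow region past $\sigma$ and $\hat P_{\mathrm{out}}:\Gamma'\to\{x=\tfrac\pi2\}$ is a ``far from the saddles'' leg. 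By the chain rule, $\partial_a\hat P=\partial_a\hat P_{\mathrm{out}}\text{ (base point fixed)}+\big(D\hat P_{\mathrm{out}}\big)\cdot\partial_a\hat P_{\mathrm{in}}$.

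The first term is the source of the bound $L/b$. A trajectory leaving $\Gamma'$ has abscissa $x_\sigma+O(\sqrt\eps)$ and reaches $\{x=\tfrac\pi2\}$ without coming near either saddle: it never returns $C_1\eps$-close to $\sigma$, and it stays away from the other saddle because its value of $H$ is, by Lemma~\ref{l:ham-sep}, different from the other saddle's. So Lemma~\ref{l:d-da} together with Lemma~\ref{l:area-1} give $\partial_a\hat P_{\mathrm{out}}\text{ (base point fixed)}=\tfrac{\pi/2-x_\sigma}{b}+O(\sqrt\eps)$; and since $\pi/2-x_R=L$ while $\pi/2-x_L=\pi-L\ge L$, in both cases this is $\ge \tfrac Lb+O(\sqrt\eps)$. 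For the second term, $D\hat P_{\mathrm{out}}>0$ because the Poincar\'e map is orientation preserving, so it remains to show $\partial_a\hat P_{\mathrm{in}}\ge O(\sqrt\eps)$, i.e.\ that the exit point on $\Gamma'$ does not drift backwards as $a$ increases. Here Lemma~\ref{l:area-2} is used to kill the change-of-$H$ error accumulated in the saddle box, while Lemma~\ref{l:plateau-heights-hamiltonian} supplies the geometric input: the discontinuity point $b_i(a)$ on $\{x=-\tfrac\pi2\}$ moves down at rate $\le-1$, so with $y$ fixed the quantity $y-b_i(a)$ increases, the trajectory clears the neighbourhood of $\sigma$ ``sooner'', and its crossing of $\Gamma'$ advances in the orientation-positive direction. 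Adding the two contributions yields $\partial_a\hat P>\tfrac Lb+O(\sqrt\eps)>0$ once $\eps$ is small; combined with Corollary~\ref{c:d-da-1} this establishes the monotonicity claimed in Lemma~\ref{l:monotone} for the trajectories that graze a saddle.

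The hard part is the passage past $\sigma$. Because the hypothesis only bounds the distance of $l_0$ to $\sigma$ from above by $C_1\eps$, this distance can be far smaller, forcing a residence time $\asymp|\ln(\mathrm{dist})|$; the divergence is $\approx-2\eps$ near $\sigma$, and the portion of the comparison tube lying in the saddle box can have area as large as $\asymp\Delta a/\eps$, so the change-of-$H$ difference could a priori be an order-one negative multiple of $\Delta a$. Lemma~\ref{l:area-2} is precisely the device that caps this at $O(\sqrt\eps\,\Delta a)$, but applying it cleanly requires choosing $\Gamma$ and $\Gamma'$ so that (a) the incoming trajectory is already $\gg\eps$ far from $\sigma$ by the time it meets $\Gamma$, (b) a trajectory leaving the saddle box cannot re-enter $C_1\eps$-close to $\sigma$, which holds because the separatrix splitting is only $O(\eps)$ by Corollary~\ref{c:int-div-loop}, and (c) the box meets only one of the two saddles, guaranteed by the $H$-separation of Lemma~\ref{l:ham-sep}. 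The remaining bookkeeping is keeping the coordinate on $\Gamma'$ (arc length versus $H$-value) consistent through the chain rule so that the orientation and sign assertions above are literally correct; this is routine but must be handled with care because $H$ itself depends on $a$.
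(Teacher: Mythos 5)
Your overall strategy --- cut the trajectory at a near‑saddle transversal, treat the outbound leg by Lemma~\ref{l:d-da} plus an $O(\sqrt\eps)$ bound on the $H$‑change, and argue the inbound leg contributes nothing negative --- is precisely the structure of the paper's proof; the auxiliary tube $\tilde{\mathcal T}$ in the paper is exactly your $\hat P_{\mathrm{out}}$ with base point fixed, and the paper's inequality $y({\bf x}'_1)-y({\bf x}'_0)\ge y(\tilde{\bf x}'_1)-y(\tilde{\bf x}'_0)$ is, after dividing by $\Delta a$ and invoking the chain rule, the same statement as $D\hat P_{\mathrm{out}}\cdot\partial_a\hat P_{\mathrm{in}}\ge 0$. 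So this is not a different route. However, the crucial step is left unproved.

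The gap is in the claim $\partial_a\hat P_{\mathrm{in}}\ge O(\sqrt\eps)$. Your justification --- $b_i(a)$ moves down by Lemma~\ref{l:plateau-heights-hamiltonian}, so the trajectory ``clears $\sigma$ sooner'' and the crossing of the near‑saddle transversal ``advances in the orientation‑positive direction'' --- is heuristic and does not actually establish the sign. Two concrete problems. First, Lemma~\ref{l:area-2} cannot be used for the inbound leg: its hypothesis is that the tube \emph{starts} on $\Gamma$ or $\Gamma'$, not that it ends there, and the inbound tube has the opposite geometry (it terminates in the slow region, where the width is of order $\Delta a/\eps$ and the $H$‑change estimate is precisely what is at issue). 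Second, and more importantly, the argument has to split according to which saddle is approached, because the near‑saddle transversal is vertical ($\Gamma'$, near the left saddle) in one case and horizontal ($\Gamma$, near the right saddle) in the other, and the ``advances in the positive direction'' claim has entirely different content in the two cases. For the left saddle the paper uses a one‑line geometric fact: for $\Delta a>0$ the field difference ${\bf w}_{a_1}-{\bf w}_{a_0}\approx(0,\Delta a)$ pushes $l_1$ above $l_0$, so $l_1$ crosses the \emph{vertical} transversal $\Gamma'$ at a higher point, which immediately gives $\partial_a\hat P_{\mathrm{in}}\ge 0$ exactly (not merely up to $O(\sqrt\eps)$). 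For the right saddle this reasoning does not carry over, because on a \emph{horizontal} transversal $\Gamma$ ``higher'' does not translate into a sign for the parameter along $\Gamma$; the paper instead computes $H_1(\tilde{\bf x}_1)-H_1(\tilde{\bf x}_0)>0$ directly, and then converts this to a position estimate $x(\tilde{\bf x}_1)<x(\tilde{\bf x}_0)$ using the fact --- built into the choice of $\Gamma$ via the constant $C_\Gamma$ --- that $\partial H/\partial x<0$ on $\Gamma$. Without this $H$‑computation the right‑saddle case is not covered. Finally, your formulation requires an implicit a priori bound on $D\hat P_{\mathrm{out}}$ to convert $\partial_a\hat P_{\mathrm{in}}\ge -C\sqrt\eps$ into a term of size $O(\sqrt\eps)$; the paper sidesteps this by proving the \emph{exact} non‑negativity of the inbound contribution rather than a bound up to $O(\sqrt\eps)$.
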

  \begin{proof}
  Consider a vertical transversal $\Gamma'$ and a horizontal transversal $\Gamma$ shown in Figure~\ref{f:monot_trans} that are at the distance $C_\Gamma \eps$ from the saddles, where the constant $C_\Gamma$ is chosen to be so large that $\frac {\partial H}{\partial x} < 0$ restricted to $\Gamma$.\footnote{
  This can be done as $-\frac{\partial H}{\partial x}$ is the vertical component of the Hamiltonian vector field ${\bf v} = ({\bf v} + \eps {\bf f}) + O(\eps)$, and the vertical component of ${\bf v} + \eps {\bf f}$ is positive and larger than $0.01 C_\Gamma \eps$ because $\Gamma$ is below the saddle.}
  Let $\gamma$ denote a trajectory that realizes \poincare map and passes $O(\eps)$-close to one of the saddles; such trajectory crosses $\Gamma'$ or $\Gamma$.
  Consider the tube $\mathcal T$ starting at the point of intersection of $\gamma$ with $\{x=-\pi/2\}$.
  Consider also an auxiliary tube $\tilde {\mathcal T}$ starting at the point $\tilde {\bf x}_0$ where $\gamma$ intersects $\Gamma'$ or $\Gamma$.
  Denote the points where the two trajectories bounding $\tilde {\mathcal T}$ cross $\{x=\frac \pi 2\}$ by $\tilde {\bf x}'_0$ and $\tilde {\bf x}'_1$.
  By construction, $\tilde {\bf x}'_0$ = ${\bf x}'_0$.

  First, consider the case when $\gamma$ passes near the left saddle,
  in which case it crosses $\Gamma'$.
  For definiteness, suppose $\Delta a > 0$.
  Since before the crossing $\gamma$ goes from left to right and ${\bf w}_{a_1} - {\bf w}_{a_0}$ is close to a
  constant vector field $(0, \Delta a)$ pointing up, before the transversal crossing $l_1$ is above $l_0$.
  This means that
  $y({\bf x}'_1) > y(\tilde {\bf x}'_1)$,
  and
  $y({\bf x}'_1) - y({\bf x}'_0) > y(\tilde {\bf x}'_1) - y(\tilde {\bf x}'_0)$.
  Thus we have (taking into account $\Delta a > 0$)
  $\frac{\partial}{\partial a} \hat P(y) \ge \lim_{\Delta a \to 0} \frac{y(\tilde {\bf x}_1') - y(\tilde {\bf x}_0')}{\Delta a}$.
  By Lemma~\ref{l:d-da} we can continue
  \[
    \frac{\partial}{\partial a} \hat P(y) \ge \lim_{\Delta a \to 0} \frac{y(\tilde {\bf x}_1') - y(\tilde {\bf x}_0')}{\Delta a}
    \ge
    - \frac {\pi/2 - (x_s  + O(\eps))} {b} + O(|\Delta \tilde H_1 - \Delta \tilde H_0|/\Delta a),
  \]
  where $x_s$ is the $x$-coordinate of the left saddle, $x_s \approx -\frac \pi 2$.
  By Lemma~\ref{l:area-2} this gives the required estimate.

  Now, suppose that $\gamma$ passes near the right saddle,
  thus crossing
  $\Gamma$.
  For defineteness, suppose $\Delta a > 0$.
  First, we prove that the intersection of $l_1$ with $\Gamma$ is to the left of the intersection of $l_0$ with $\Gamma$.
  Recall that $\tilde {\bf x}_0$ and $\tilde {\bf x}_1$ denote the intersections of $l_0$ and $l_1$, respectively, with $\Gamma$.
  Set $\tilde \Delta H_0 = H_0(\tilde {\bf x}_0) - H_0({\bf x}_0)$ and
  $\tilde \Delta H_1 = H_1(\tilde {\bf x}_1) - H_1({\bf x}_0)$.
  Arguing as in Lemma~\ref{l:area-1}, we get the estimate
  $\tilde \Delta H_1 - \tilde \Delta H_0 = O(\sqrt{\eps} \Delta a)$.
  To compare the two trajectories, we estimate $H_1(\tilde {\bf x}_1) - H_1(\tilde {\bf x}_0)$. We have
  \begin{align*}
  \begin{split}
    H_1(\tilde {\bf x}_0)
    &=
    (H_1 - H_0)(\tilde {\bf x}_0) + H_0(\tilde {\bf x}_0)
    =
    (H_1 - H_0)(\tilde {\bf x}_0) + H_0({\bf x}_0) + \tilde \Delta H_0 =\\
    &=
    (H_1 - H_0)(\tilde {\bf x}_0) - (H_1 - H_0)({\bf x}_0) + H_1({\bf x}_0) + \tilde \Delta H_0 =\\
    &=
    -\Delta a \big(x(\tilde {\bf x}_0) - x({\bf x}_0)\big) + (\tilde \Delta H_0 - \tilde \Delta H_1)  + H_1(\tilde {\bf x}_1) .
  \end{split}
\end{align*}
  This gives
  $H_1(\tilde {\bf x}_1) = H_1(\tilde {\bf x}_0) + \Delta a \big(x(\tilde {\bf x}_0) - x({\bf x}_0)\big) + O(\sqrt{\eps}\Delta a) > H_1(\tilde {\bf x}_0)$.
  Recall that by our choice of the transversals   we have $\frac{\partial H_1}{\partial x} < 0$ on $\Gamma$.
  This implies $x(\tilde {\bf x}_1) < x(\tilde {\bf x}_0)$.

  Now, we can argue as in the first case considering auxiliary tube $\tilde {\mathcal T}$ starting at $\tilde {\bf x}_0$. This gives (using $x_s$ for the $x$-coordinate of the right saddle)
  \begin{align*}
  \begin{split}
    \frac{\partial}{\partial a} \hat P(y) &\ge \lim_{\Delta a \to 0} \frac{y(\tilde {\bf x}_1') - y(\tilde {\bf x}_0')}{\Delta a}
    \ge
    - \frac {\pi/2 - (x_s  + O(\eps))} {b} + O(|\Delta \tilde H_1 - \Delta \tilde H_0|/\Delta a)\ge\\
    &\ge
     \frac L b + O(\sqrt{\eps}).
  \end{split}
  \end{align*}
\end{proof}

\begin{proof}[Proof of Lemma~\ref{l:monotone}]
  Together, Corollaries~\ref{c:d-da-1} and~\ref{c:d-da-2} imply that $\frac{\partial}{\partial a}\hat P(y) > 0$ for any $y$ such that the \poincare map is defined. In Lemma~\ref{l:monotone} the \poincare map $\tilde P$ was written using the coordinate $z=\frac{y-x}{2\pi}$, and we have $\frac{\partial}{\partial a}\tilde P(y) = \frac{1}{2 \pi} \frac{\partial}{\partial a}\hat P(y) > 0$.
\end{proof}

\end{appendices}

\newpage

\printbibliography

\bigskip

\bigskip

\noindent Mark Levi,

\noindent {\small Department of Mathematics,}

\noindent {\small Pennsylvania State University,}

\noindent{\small University Park, State College, PA 16802}

\noindent {\footnotesize{E-mail : mxl48@psu.edu}}

\vskip 5mm

\noindent Alexey Okunev,

\noindent {\small Department of Mathematics,}

\noindent {\small Pennsylvania State University,}

\noindent{\small University Park, State College, PA 16802}

\noindent {\footnotesize{E-mail : abo5297@psu.edu}}

\end{document}